\newcommand{\lcn}[1]{\prescript{#1}{}{\nabla}}
\DeclareMathOperator{\assoc}{assoc}
\DeclareMathOperator{\Ham}{Ham}
\DeclarePairedDelimiter\pg{[}{]}
\DeclarePairedDelimiter\po{ \{ }{ \} }
\newcommand{\rest}[2]{\left.#1\right|_{#2}}
\newcommand{\dif}{\mathrm{d}}
\DeclareMathOperator{\en}{\mathrm{End}}
\newcommand{\spann}{\mathrm{span}}
\newcommand{\ric}{\mathrm{Ric}}
\newcommand{\e}{\mathrm{e}}
\newcommand{\pr}{\mathrm{pr}}
\newcommand{\kil}{\mathrm{Kill}}
\newcommand{\aff}{\mathrm{Aff}}
\newcommand{\jac}{\mathrm{Jac}}
\newcommand{\cyc}{\mathrm{cyclic}}
\newcommand{\cg}{\text{g}}
\newcommand{\wn}{\dot{\nabla}}
\newcommand{\wnt}{\dot{\nabla}^0}
\newcommand\varlist {,\makebox[1em][c]{.\hfil.\hfil.},}
\newcommand{\N}{{\mathbb{N}}}
\newcommand{\R}{{\mathbb{R}}}
\newcommand{\cC}{\mathcal{C}}
\newcommand{\cCi}{\cC^\infty}
\newcommand{\al}{\alpha}
\newcommand{\be}{\beta}
\newcommand{\GL}{\mathrm{GL}}
\newcommand{\SO}{\mathrm{SO}}
\newcommand{\SSS}{\mathrm{S}}
\newcommand{\supp}{\mathrm{supp}}
\DeclareMathOperator{\Jac}{Jac}
\DeclareMathOperator{\grad}{grad}
\DeclareMathOperator{\tr}{tr}
\DeclareMathOperator{\rk}{rk}
\DeclareMathOperator{\im}{im}
\DeclareMathOperator{\Sym}{Sym}
\DeclareMathOperator{\sym}{sym}
\DeclareMathOperator{\Ann}{Ann}
\newtheorem{theorem}{Theorem}[section]
\newtheorem{corollary}[theorem]{Corollary}
\newtheorem{lemma}[theorem]{Lemma}
\newtheorem{proposition}[theorem]{Proposition}
\newtheorem{example}[theorem]{Example}
\newenvironment{customthm}[1]
{\innercustomthm}
{\endinnercustomthm}
\newenvironment{customprop}[1]
{\innercustomprop}
{\endinnercustomprop}
\newenvironment{customlem}[1]
{\innercustomlem}
{\endinnercustomlem}
\theoremstyle{definition}
\newtheorem{definition}[theorem]{Definition}
\newtheorem*{definition*}{Definition}
\theoremstyle{remark}
\newtheorem{remark}[theorem]{Remark}
\newcommand{\symall}{\mathfrak{X}^{\bullet}_\text{sym}(M)}
\newcommand{\symr}{\mathfrak{X}^r_\text{sym}(M)}
\newcommand{\symbi}{\mathfrak{X}^2_\text{sym}(M)}
\title[]{Symmetric Poisson geometry,\\totally geodesic foliations\\
and Jacobi-Jordan algebras}
\author[F. Moučka]{Filip Moučka}
\author[R. Rubio]{Roberto Rubio}
\address{Universitat Aut\`onoma de Barcelona, 08193 Barcelona, Spain;\newline \indent Faculty of Nuclear Sciences and Physical Engineering, Czech Technical\newline\indent University in Prague, 115 19 Prague 1, Czech Republic}
\email{filip.moucka@autonoma.cat; mouckfil@cvut.cz}
\address{Universitat Aut\`onoma de Barcelona, 08193 Barcelona, Spain}
\email{roberto.rubio@uab.es}
\thanks{This project has been supported by MICIU/AEI/10.13039/501100011033 and the EU FEDER under the grants PID2022-137667NA-I00 (GENTLE) and CNS2024-154695 (DÉCOLLAGE). The first author has been partially supported by the Grant Agency of the Czech Technical University in Prague, grant No. SGS25/163/OHK4/3T/14. The second author has also received support from the 
MICIU/AEI and the EU FSE under the Ramón y Cajal fellowship RYC2020-030114-I and from the AGAUR under the grant 2021-SGR-01015.}
\begin{document}

\setcounter{tocdepth}{1}

\begin{abstract}
 We introduce symmetric Poisson structures as pairs consisting of a symmetric bivector field and a torsion-free connection satisfying an integrability condition analogous to that in usual Poisson geometry. Equivalent conditions in Poisson geometry have inequivalent analogues in symmetric Poisson geometry and we distinguish between symmetric and strong symmetric Poisson structures. We prove that symmetric Poisson structures correspond to locally geodesically invariant distributions together with a characteristic metric, whereas strong symmetric Poisson structures correspond to totally geodesic foliations together with a leaf metric and a leaf connection. We introduce, using the \mbox{Patterson-Wal}ker metric, a dynamics on the cotangent bundle and show its connection to symmetric Poisson geometry, the parallel transport equation and the Newtonian equation for conservative systems. Finally, we prove that linear symmetric Poisson structures are in correspondence with Jacobi-Jordan algebras, whereas strong symmetric correspond to those that are moreover associative.
\end{abstract}

\maketitle

\vspace{-.6cm}

\tableofcontents

\section{Introduction}

Riemannian, or more generally \mbox{(pseudo-)Rie}mannian, and symplectic geometry are among the most established areas in mathematics. They describe geometric structures: metrics and symplectic forms. These are, respectively, symmetric and skew-symmetric $2$-forms with a non-degeneracy condition (and, in the case of symplectic structures, the extra integrability condition of being closed). 

Symplectic geometry degenerates, or is extended, in two ways. First, by allowing the closed $2$-form to be degenerate, which gives rise to a presymplectic structure. Second, by considering the inverse of a symplectic structure, that is, a skew-symmetric bivector field, and allowing it to be degenerate. The latter, together with an integrability condition generalizing closedness, is the starting point of Poisson geometry, a much richer theory than presymplectic geometry. 

For a metric, its degeneration results in the study of symmetric bilinear forms with non-trivial kernel, which are well known. However, we have not found a general study of degenerations of metrics as symmetric bivectors, that is, of what should be called symmetric Poisson geometry.

The aim of this paper is to overtake this study of symmetric bivector fields by bringing in the natural analogy with Poisson geometry, that is, establishing an integrability condition and describing its geometric and dynamical significance.

Using an analogue of the most common formulations of the integrability condition in Poisson geometry results in a void condition or the vanishing of the symmetric bivector field (Section \ref{sec:int-Poisson}). We resort to symmetric Cartan calculus \cite{SymCartan} to find suitable integrability conditions. There are two key points that deserve special mention.

First, symmetric Cartan calculus depends upon the choice of a connection on $M$. This is actually very natural if we recall that non-degenerate bivector fields are equivalent to \mbox{(pseudo-)Riem}annian metrics, whose features are two-fold:
\begin{center}
 \SetTblrInner{rowsep=0.6ex}
\begin{tblr}{width=\textwidth, colspec={m{0.25\textwidth,c}|m{0.675\textwidth,l}}}
metric-related & lengths and angles, geodesics as length minimizing curves, gradients, volume and integration, Hodge star\\
\hline
connection-related &curvature, parallel transport and geodesics, totally geodesic submanifolds, differentiation of tensor fields\\
\end{tblr} 
\end{center}
\sloppy This split is not apparent because of the existence and uniqueness of the \mbox{Levi-Civ}ita connection. However, when a \mbox{(pseudo-)Riem}annian metric degenerates as a symmetric bivector field, there is no Levi-Civita connection and we have to incorporate a compatible connection into the definition. The choice of a connection, which we can assume to be torsion-free, determines the symmetric derivative operator, the symmetric Lie derivative and the symmetric bracket on vector fields, $[\,\,,\,]_s$, which we extend to the symmetric Schouten bracket on symmetric multivector fields $\mathfrak{X}^\bullet_{\sym}(M)$. 

Second, for a pair $(\vartheta,\nabla)$ consisting of $\vartheta\in \mathfrak{X}^2_{\sym}(M)$ and a torsion-free connection $\nabla$, the integrability conditions that mirror the most common ones in Poisson geometry are
\begin{align*}\label{eq:two-conditions-intro}
 [\vartheta,\vartheta]_s&=0 &&\text{and} & X_{\{f,\,\cg\}}=&[X_f,X_\cg]_s\end{align*}
for $f,\cg\in\cCi(M)$, where $\{f,\cg\}:=\vartheta(\dif f,\dif\cg)$, $X_f:=\vartheta(\dif f)$, which is the gradient of $f$ in this context, and $[\,\, ,\,]_s$ also denotes the symmetric Schouten associated to $\nabla$. Unlike in the usual Poisson case, these two conditions are not equivalent. The latter implies the former, thus we arrive at the following definition:

\begin{definition*}
A \textbf{symmetric Poisson structure} is a pair $(\vartheta,\nabla)$ such that $[\vartheta,\vartheta]_s=0$. If, moreover, $X_{\{f,\,\cg\}}=[X_f,X_\cg]_s$, we call it a strong symmetric Poisson structure.
\end{definition*}  
A good source of recognizable examples is provided in Section \ref{sec:non-deg-sym-Poisson}: non-degenerate symmetric Poisson structures correspond to the inverses of Killing tensors (see \cite{SymCartan} and the references therein) together with the torsion-free connection they are Killing for, whereas non-degenerate strong symmetric Poisson structures are inverses of pseudo-Riemannian metric together with their Levi-Civita connection.

The next step is the geometric interpretation of the integrability conditions for a pair $(\vartheta, \nabla)$. When regarded as a map $\vartheta: T^*M\to TM$, we have a distribution $\im \vartheta\subseteq TM$ and a $\cCi(M)$-module $\mathcal{F}_{\vartheta}:=\vartheta(\Omega^1(M))\subseteq \mathfrak{X}(M)$. In general, the distribution and module do not determine a partition of the manifold, as the condition $[\vartheta,\vartheta]_s=0$ does not even imply Lie involutivity, but only that the characteristic module is preserved by the symmetric bracket, that is, $[\mathcal{F}_\vartheta, \mathcal{F}_\vartheta]_s\subseteq \mathcal{F}_\vartheta$. On the other hand, the stronger condition $X_{\{f,\,\cg\}}=[X_f,X_\cg]_s$ does imply the Lie involutivity of $\mathcal{F}_\vartheta$ (Proposition \ref{prop: ssPs-involutivity}). This motivates the introduction of an a priori, intermediate class, involutive symmetric Poisson structures, those $(\vartheta, \nabla)$ such that $\mathcal{F}_\vartheta$ is involutive for the Lie bracket. We thus have:
\begin{equation*}
 \left\{\begin{array}{c}
\text{strong symmetric}\\
 \text{Poisson structures}
 \end{array}\right\}\subseteq\left\{\begin{array}{c}
\text{involutive symmetric}\\
 \text{Poisson structures}
 \end{array}\right\}\subseteq\left\{\begin{array}{c}
\text{symmetric}\\
 \text{Poisson structures}
 \end{array}\right\}.
\end{equation*}

The main questions are what is the geometric interpretation of these three classes of structures and whether these are different classes. The only immediate answer is given by Example \ref{ex: SO(3)}, which is a non-involutive symmetric Poisson structure.

The distribution $\im \vartheta$ comes equipped, at each $m\in M$, with the
 canonical linear \mbox{(pseudo-)Rie}mannian metric $g_{\vartheta_m}$ given, for $\alpha,\beta\in T^*_mM$, by
\begin{equation*}\label{eq: vartheta-char-metric-intro}
 g_{\vartheta_m}(\vartheta(\alpha),\vartheta(\beta)):=\vartheta(\alpha,\beta). 
\end{equation*}
For $\nabla$-geodesics that are $\vartheta$-admissible (those that admit a curve $a:I\rightarrow T^*M$ such that $\vartheta(a)$ is the velocity of the geodesic), we prove that the square of the speed, with respect to $g_{\vartheta_m}$, is constant (Proposition \ref{prop: const-speed}), thus extending a well-known fact for \mbox{(pseudo-)Rie}mannian manifolds.

When $(\vartheta,\nabla)$ is moreover involutive, we can describe what happens on a leaf $N$: the metrics $g_{\vartheta_m}$ extend to a \mbox{(pseudo-)Rie}mannian metric $g_N$, and the connection $\nabla$ restricts to a torsion-free connection $\nabla^N$.

This leaf-wise structure allows us to prove our first main result, which should be compared with the symplectic foliation of usual Poisson geometry.
\begin{customthm}{\ref{thm:nabla-geodesic-part-of-inv-sym-Poisson}}
 The characteristic partition of an involutive symmetric Poisson structure $(\vartheta,\nabla)$ is totally geodesic. Moreover, on any leaf $N$, the pair $(g^{-1}_N,\nabla^N)$ is a non-degenerate symmetric Poisson structure. In addition, if $(\vartheta,\nabla)$ is strong, $\nabla^N$ is the Levi-Civita connection of $g_N$, that is, $(g^{-1}_N,\nabla^N)$ is also strong.
\end{customthm}

Poisson geometry originates from the study of Hamiltonian dynamics formulated in terms of the canonical Poisson bracket on $\cCi(T^*M)$, which comes from the inverse of the canonical symplectic form. The symmetric analogue of this symplectic form is the so-called Patterson-Walker metric $g_\nabla$, first introduced in \cite{PatRE}, which depends upon the choice of a connection $\nabla$ and can be understood in the framework of symmetric Cartan calculus, \cite[Sec. 5]{SymCartan}. By considering the inverse of $g_\nabla$ we obtain the symmetric Poisson bracket $\{\,,\}_\nabla$ on $\cCi(T^*M)$, which allows us to introduce an analogue of Hamiltonian dynamics, which we call Patterson-Walker dynamics. For a given $H\in \cCi(T^*M)$ we study the integral curves of $\{ H,\,\}_\nabla$. By making different choices, we recover the parallel transport equation, the gradient extension of a dynamical system and the Newtonian equation for conservative systems. The study of this dynamics comes with two unexpected consequences. First, a relation between the symmetric Schouten bracket and the Patterson-Walker metric.
\begin{customprop}{\ref{prop: sym-Schouten-PW}}
Let $\nabla$ be a torsion-free connection on $M$. The vertical lift of a symmetric multivector field is an algebra isomorphism between the commutative algebras $(\mathfrak{X}^\bullet_\emph{sym}(M),[\,\,,\,]_s)$ and $(\mathcal{P}ol(T^*M),\lbrace\,,\rbrace_\nabla)$.
\end{customprop}
\noindent Second, the geometrical description of symmetric Poisson structures as locally geodesically invariant distributions, which we introduce as those for which every geodesic whose velocity is on the distribution at a point, must have its velocity on the distribution in a neighbourhood (Definition \ref{def:loc-geod-inv}). Our second main result is:

\begin{customthm}{\ref{thm: gen-Lewis}}
 The characteristic distribution of a symmetric Poisson structure is locally geodesically invariant. 
\end{customthm}

Having this clear geometric interpretation in Theorems \ref{thm:nabla-geodesic-part-of-inv-sym-Poisson} and \ref{thm: gen-Lewis} is the most solid justification for our definition of (strong) symmetric Poisson structures formulated in terms of symmetric Cartan calculus.

We finish the paper by looking at examples of symmetric Poisson structures, including squares of autoparallel vector fields, as well as left-invariant symmetric Poisson structures. The most interesting class is that of linear symmetric Poisson structures on the dual of a vector space. By considering the linear function $\iota_u\in\cCi(V^*)$ for $u\in V$, we prove our third and last main result.

\begin{customthm}{\ref{thm: JJA-1-to-1}}
 The assignment $\iota_{u\cdot v}:=\lbrace \iota_u,\iota_v\rbrace$ gives a bijection
 \begin{equation*}
 \left\{\begin{array}{c}
\text{linear (strong) symmetric Poisson}\\
 \text{structures $(\lbrace\,,\rbrace,\nabla^\emph{Euc})$ on }V^*
 \end{array}\right\}\overset{\sim}{\longleftrightarrow}\left\{\begin{array}{c}
 \text{(associative) Jacobi-Jordan}\\
 \text{algebra structures $\cdot$ on }V
 \end{array}\right\}.
 \end{equation*}
\end{customthm}

This is the analogue of the fact that linear Poisson structures on a vector space $V^*$ correspond to Lie algebra structures on $V$. It establishes a bridge to Jacobi-Jordan algebras \cite{JJaBur}, which allows us to bring classification results in low dimensions and translate geometric invariants into algebraic ones. In particular, we bring a normal form for non-strong linear symmetric Poisson structures in dimension $5$ and find the first example of an involutive symmetric Poisson structure that is non-strong for any choice of connection (Example \ref{ex: R5}). This also shows that symmetric, involutive symmetric and strong symmetric Poisson structures are indeed different classes.

In this work, we have laid the foundations of symmetric Poisson geometry from the Poisson viewpoint. A natural continuation is the study of symmetric Poisson maps, product structures, submanifolds and cohomology. However, there is an intriguing (pseudo-)Riemannian side too, where natural notions such as the \textit{symmetric Poisson scalar curvature} $\mathcal{R}_{(\vartheta,\nabla)}:=\tr(\vartheta\otimes \ric_\nabla)$ and the \textit{symmetric Poisson Laplacian} 
$\Delta_{(\vartheta,\nabla)}f:=\tr(\vartheta\otimes\nabla\dif f)$ can be defined, or a connection with sub-Riemannian geometry can be established. These directions will be explored in future work.

\vspace{3pt}

\textbf{Acknowledgements.} We would like to thank the Weizmann Institute of Science for their hospitality during our visit, when symmetric Poisson structures were first discovered, as well as Gil Cavalcanti, Marius Crainic, Marco Gualtieri, Andrew Lewis and Rui Loja Fernandes for helpful discussions on the subject. 

\vspace{3pt}

\textbf{Notation and conventions:} All manifolds and maps are smooth. We consider an arbitrary manifold $M$ (of positive dimension $n\in\N$). We denote its smooth functions by $\cCi(M)$, its tangent bundle by $TM$, its cotangent bundle by $T^*M$, its vector fields by $\mathfrak{X}(M)$ and its differential forms by $\Omega^\bullet(M)$. By a connection on $M$, we mean an affine connection (that is, a connection on $TM$), which can induce other connections (for instance, on $T^*M$ by duality). We denote all the induced connections with the same symbol. We recall that every such connection has an associated torsion-free connection, which we denote by $\nabla^0$ and is given by
\begin{equation*}
 \nabla^0_XY:=\nabla_XY-\frac{1}{2}T_\nabla(X,Y),
\end{equation*}
where $T_\nabla\in\Omega^2(M,TM)$ is the torsion tensor of $\nabla$.

We often use polarization in our proofs. That is, given real vector spaces $V$ and $V'$, two totally symmetric multilinear maps $\varphi, \psi:V\times\ldots\times V\rightarrow V'$ are the same if and only if $\varphi(u\varlist u)=\psi(u\varlist u)$ for every $u\in V$. From now on, we will denote by $V$ a real finite-dimensional vector space.

We use the Einstein summation convention and the Kronecker $\delta$ notation throughout the text.

\section{\for{toc}{Symmetric bivector fields and the symmetric Schouten bracket}\except{toc}{Symmetric bivector fields and the\\ symmetric Schouten bracket}} 

Just as Poisson structures are skew-symmetric bivector fields that satisfy an integrability condition, symmetric bivector fields are the basis for defining symmetric Poisson structures. We will see that their integrability condition is a richer question.

\subsection{The symmetric Poisson bracket and the gradient map}

We denote the space of symmetric bivector fields on $M$ by $\symbi:=\Gamma(\Sym^2TM)$ and use $\vartheta$ for a generic element. We will consistently use the notation $f$, $\cg$, $h\in \cCi(M)$.

Any $\vartheta \in \symbi$ determines, via 
\begin{equation}\label{eq:sym-bracket}
\{ f,\cg\}:=\vartheta(\dif f,\dif \cg) 
\end{equation}
 an $\R$-bilinear map $\{\,,\}:\cCi(M)\times \cCi(M)\rightarrow \cCi(M)$ satisfying
\begin{align*}
&\{ f,\cg\}=\{ \cg,f\}, & &\{ f,\cg h\}=\{ f,\cg\} h+\cg\{ f,h\},
\end{align*}
which we will call a \textbf{symmetric Poisson bracket}. In fact, \eqref{eq:sym-bracket} gives a one-to-one correspondence between $\symbi$ and such brackets.

Moreover, $\vartheta$ determines the natural map $\grad:\cCi(M)\rightarrow \mathfrak{X}(M)$,
\[
\grad:=\vartheta\circ\dif,
\]
or, equivalently in terms of the bracket, $\grad f:=\po{f,\,\,}$. This is called the \textbf{gradient} of $f\in\cCi(M)$. We will also use the shorthand notation
\[X_f:=\grad f.\]

\begin{example}[Flat \mbox{(pseudo-)Rie}mannian manifolds]\label{ex: flat}
Consider the cartesian space $\R^{p+q}$ with the (pseudo-)Euclidean metric $g$ of signature $(p,q)$. The inverse of the metric is the symmetric bivector field
\begin{equation*}
g^{-1}=\sum_{i=1}^p\partial_{x^i}\otimes\partial_{x^i}-\sum_{j=1}^q\partial_{x^{p+j}}\otimes\partial_{x^{p+j}}.
\end{equation*}
The corresponding bracket is given, for $f$, $\emph{g}\in \cCi(\mathbb{R}^{p+q})$, by
\begin{equation*}
\{ f,\emph{g}\}=\sum_{i=1}^p\frac{\partial f}{\partial x^i}\frac{\partial \emph{g}}{\partial x^i}-\sum_{j=1}^q\frac{\partial f}{\partial x^{p+j}}\frac{\partial \emph{g}}{\partial x^{p+j}}.
\end{equation*}
The gradient of $f$ takes the form
\begin{equation*}
X_f=\sum_{i=1}^p\frac{\partial f}{\partial x^i}\partial_{x^i}-\sum_{j=1}^q\frac{\partial f}{\partial x^{p+j}}\partial_{x^{p+j}},
\end{equation*}
which specializes to the usual gradient when $q=0$.
More generally, if $(M,g)$ is a flat \mbox{(pseudo-)Rie}mannian manifold of signature $(p,q)$, there exist local coordinates around every point of $M$ on which the above expressions hold.
\end{example}

\begin{remark}
 Note that, for $X\in\mathfrak{X}(M)$, the symmetric bivector fields $X\otimes X$ and $\frac{1}{2} X\odot X$ are equal. As in Example \ref{ex: flat}, we will mostly use the notation $X\otimes X$.
\end{remark}

\subsection{Integrability for Poisson structures}\label{sec:int-Poisson}

The most common formulations of the integrability of a Poisson structure $\pi\in\mathfrak{X}^2(M):=\Gamma(\wedge^2TM)$ are: 
\begin{enumerate}[(i), leftmargin=35pt]
 \item $[\pi,\pi]=0$ for the Schouten bracket \cite{SchSSB}, a natural extension of the Lie bracket on vector fields to skew-symmetric multivector fields.
 \item the vanishing of the Jacobiator for the Poisson bracket $\{f,\cg\}_\pi:=\pi(\dif f,\dif\cg)$.
 \item\sloppy the Hamiltonian map $\Ham\!: \cCi(M)\to \mathfrak{X}(M)$, given by $\Ham f:=\pi(\dif f)$, being an algebra morphism, that is, $\Ham \{ f,\cg\}_\pi=[\Ham f,\Ham \cg]$.
\end{enumerate}

For a symmetric bivector field $\vartheta$, the direct analogues of these three properties are not suitable. To start with, the Schouten bracket for symmetric multivector fields, also introduced in \cite{SchSSB} and whose definition we will recall in Section \ref{sec: analog-can-2-form}, is anti-commutative (not graded anti-commutative), so the condition $[\vartheta,\vartheta]=0$ is trivially satisfied. With respect to the Jacobiator of the symmetric Poisson bracket $\{\,,\}$ in \eqref{eq:sym-bracket}, there are two possible options (which coincide in the skew-symmetric case). The first is the cyclic Jacobiator,
\begin{equation}\label{eq:Jacobiator}
\Jac(f,\cg,h):=\{ f, \{\cg, h\}\} + \{ \cg, \{h, f\}\} + \{h,\{ f,\cg\}\}, 
\end{equation}
for which one can check 
\[
\Jac(f^2,f,f)=2f\Jac(f,f,f)+4\{f,f\}^2,
\] 
so the vanishing of the Jacobiator implies $\{f,f\}=0$. By polarization, we get that the bracket, and hence $\vartheta$, vanishes. The second option, which is the vanishing of the Jacobiator
\[
\Jac'(f,\cg,h):=\{ f, \{\cg, h\}\} - \{ \{f,\cg\}, h\} - \{ \cg, \{f,h\}\},
\]
is actually equivalent to the analogue of the third condition,
\begin{align*}
 X_{\{f,\, \cg\}}&= [X_f,X_\cg].
\end{align*}
Similarly as for the first Jacobiator, we find
\[\jac'(f^2,f,f)=2f\jac'(f,f,f)-4\po{f,f}^2,\]
hence the vanishing of the second Jacobiator also implies $\vartheta=0$.

We will thus need a different notion to talk about integrability for symmetric bivector fields and, as we have mentioned in the Introduction, we resort to symmetric Cartan calculus, which intrinsically depends on the choice of a torsion-free connection. We recall its formalism next.

\subsection{Symmetric Cartan calculus \cite{SymCartan}} We use the analogy with the Cartan calculus of the exterior algebra $\Omega^\bullet(M)$. This algebra is endowed with the exterior derivative: the only graded derivation that squares to zero and is geometric, in the sense that it is of degree $1$ and acts as $(\dif f)(X)=Xf$ for $X\in\mathfrak{X}(M)$.

We denote by $\Upsilon^r (M)$, for $r\in \N$, the $\cCi(M)$-module of $r$-symmetric forms (sections of $\Sym^r T^*M$), and by $\Upsilon^\bullet(M)$ the symmetric algebra, which is generated by all symmetric forms and is endowed with the symmetric product $\odot$. We denote by `$\sym$' the usual symmetric projection defined on $\otimes^r T^*M$, which fixes symmetric elements.

By \cite[Sec. 2.2]{SymCartan}, any geometric derivation of the symmetric algebra $\Upsilon^\bullet(M)$ is determined by a connection $\nabla$ on $M$ via the formula
\begin{equation}\label{eq: sym-der-def}
 \nabla^s\varphi:=(r+1)\sym(\nabla\varphi)
\end{equation}
for $\varphi\in\Upsilon^r(M)$. We call $\nabla^s$ the \textbf{symmetric derivative} corresponding to~$\nabla$. Without loss of generality, see \cite[Prop. 2.9]{SymCartan}, we may choose $\nabla$ in \eqref{eq: sym-der-def} to be torsion-free. Elements in the kernel of the symmetric derivative are precisely Killing tensors for $\nabla$. The space of Killing $r$-tensors we denote by $\kil^r_\nabla(M)$ and the direct sum of these spaces over $r\in\N$ by $\kil^\bullet_\nabla(M)$. 

From $\nabla^s$, we can introduce the \textbf{symmetric Lie derivative}
\begin{equation}\label{eq: sym-Lie-der}
 L^s_X:=[\iota_X,\nabla^s]=\iota_X\circ\nabla^s-\nabla^s\circ\iota_X
\end{equation}
and derive the \textbf{symmetric bracket} by imposing
\begin{equation}\label{eq: sym-bracket-derived}
 \iota_{\pg{X,Y}_{s}}=[L^s_X,\iota_Y],
\end{equation}
which yields
\begin{equation}\label{eq: sym-br-def}
 \pg{X,Y}_s:=\nabla_XY+\nabla_YX.
\end{equation}

\subsection{The symmetric Schouten bracket}\label{sec:sym-Schouten} 

Once we have recalled the symmetric bracket, we introduce the symmetric Schouten bracket as its extension by the derivation property and commutativity. We denote the space of degree-$r$ symmetric multivector fields by $\symr:=\Gamma(\Sym^r TM)$ and the space of all symmetric multivector fields, that is, the direct sum of $\symr$ for $r\in \N$, by $\symall$ (note that this is not the space of sections of a finite-rank vector bundle).

\begin{definition}\label{def: s-Schouten}
For a connection $\nabla$ on $M$, the symmetric \textbf{Schouten bracket} is the unique $\R$-bilinear map
\begin{equation*}
[\,\,,\,]_s:\symall \times \symall \rightarrow \symall 
\end{equation*}
such that 
\begin{itemize}
\item $[X,f]_s=Xf$ and, on vector fields, it is given by \eqref{eq: sym-br-def},
\item $[\mathcal{X},\,\,]_s$ is a degree-$(r-1)$ derivation of $(\symall,\odot)$ for $\mathcal{X}\in\symr$,
\item $[\mathcal{X},\mathcal{Y}]_s=[\mathcal{Y},\mathcal{X}]_s$ for every $\mathcal{X}$, $\mathcal{Y}\in\symall$.
\end{itemize}
\end{definition}

The existence and uniqueness of the symmetric Schouten bracket follows by its values on decomposable elements. Using the axioms, for $X_j, Y_i\in\mathfrak{X}(M)$, we get
\begin{multline}\label{eq: sym-Schouten-decomposable}
[X_1\odot\ldots\odot X_r,Y_1\odot\ldots\odot Y_l]_{s}\\ =\sum_{j=1}^r\sum_{i=1}^l\pg{X_j,Y_i}_{s}\odot X_1\odot \ldots\hat{X}_j\ldots\odot X_r\odot Y_1\odot \ldots \hat{Y}_i\ldots \odot Y_l.
\end{multline}

There is an explicit expression for the bracket of two generic elements, where we use the symbol $\star$ to denote the factors we are taking the trace on.

\begin{proposition}\label{prop: s-Schouten-explicit}
The symmetric Schouten bracket corresponding to a connection $\nabla$ on $M$ is given, for $\mathcal{X}$, $\mathcal{Y}\in\mathfrak{X}^\bullet_\emph{sym}(M)$, by 
\begin{equation}\label{eq: s-Schouten-formula}
[\mathcal{X},\mathcal{Y}]_s=\tr(\iota_\star\mathcal{X}\odot\nabla_\star\mathcal{Y}+\nabla_\star\mathcal{X}\odot\iota_\star\mathcal{Y}).
\end{equation}
\end{proposition}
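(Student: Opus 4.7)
The plan is to verify that the right-hand side of \eqref{eq: s-Schouten-formula} satisfies the three defining properties in Definition \ref{def: s-Schouten} and invoke the uniqueness asserted there. First, I would check that the expression is well-defined and of the correct degree: for $\mathcal{X}\in\mathfrak{X}^r_\text{sym}(M)$ and $\mathcal{Y}\in\mathfrak{X}^l_\text{sym}(M)$, the factor $\iota_\star\mathcal{X}$ carries one covector slot (the $\star$ slot) and degree $r-1$, while $\nabla_\star\mathcal{Y}$ carries one vector slot and degree $l$; tracing these two slots off against each other yields a section of $\Sym^{r+l-1}TM$, as required.

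Commutativity is immediate by interchanging the two summands on the right-hand side. For the vector-field axiom, I compute in a local frame. With $\mathcal{X}=X\in\mathfrak{X}(M)$ and $\mathcal{Y}=f\in\cCi(M)$ only the first summand survives and gives $\sum_k\bigl(\dif x^k(X)\bigr)\bigl(\partial_k f\bigr)=Xf$, and with $\mathcal{X}=X$, $\mathcal{Y}=Y$ both vector fields I obtain $\nabla_XY+\nabla_YX$, matching \eqref{eq: sym-br-def}.

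The derivation property is the main content. I would combine the Leibniz rule for $\nabla$, namely $\nabla_\star(\mathcal{Y}\odot\mathcal{Z})=\nabla_\star\mathcal{Y}\odot\mathcal{Z}+\mathcal{Y}\odot\nabla_\star\mathcal{Z}$, with the derivation property of $\iota_\alpha$ on $(\mathfrak{X}^\bullet_\text{sym}(M),\odot)$ (dual to the analogous derivation on $\Upsilon^\bullet(M)$ recalled in \cite{SymCartan}), giving $\iota_\star(\mathcal{Y}\odot\mathcal{Z})=\iota_\star\mathcal{Y}\odot\mathcal{Z}+\mathcal{Y}\odot\iota_\star\mathcal{Z}$. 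Substituting these into the right-hand side of \eqref{eq: s-Schouten-formula} and pulling the non-starred factors out of the trace yields
\begin{equation*}
 [\mathcal{X},\mathcal{Y}\odot\mathcal{Z}]_s=[\mathcal{X},\mathcal{Y}]_s\odot\mathcal{Z}+\mathcal{Y}\odot[\mathcal{X},\mathcal{Z}]_s,
\end{equation*}
which is the derivation axiom of degree $r-1$.

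The main obstacle I expect is careful bookkeeping of the $\star$-trace: one must explicitly justify that the trace distributes through the symmetric product, in the sense that $\tr(A\odot B\odot C)=\tr(A\odot C)\odot B$ whenever the paired slots lie one in $A$ and one in $C$. This is a standard but fiddly identity for symmetric tensors. As a sanity check, I would also evaluate \eqref{eq: s-Schouten-formula} directly on decomposable elements $\mathcal{X}=X_1\odot\cdots\odot X_r$ and $\mathcal{Y}=Y_1\odot\cdots\odot Y_l$, using $\sum_k\dif x^k(X_j)\nabla_{\partial_k}=\nabla_{X_j}$, and verify that the resulting expression matches \eqref{eq: sym-Schouten-decomposable}; by uniqueness this gives an independent proof of the formula.
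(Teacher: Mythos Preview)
Your proposal is correct and follows essentially the same route as the paper: define the candidate bracket by the right-hand side of \eqref{eq: s-Schouten-formula}, verify the axioms of Definition~\ref{def: s-Schouten} (commutativity, the values on functions and vector fields, and the derivation property via the Leibniz rules for $\nabla$ and $\iota_\alpha$), and conclude by uniqueness. The paper's proof is slightly terser about the trace bookkeeping you flag, simply noting that it follows from associativity of $\odot$ together with $\iota_\alpha$ and $\nabla_X$ being derivations; your additional sanity check on decomposables is not needed but does no harm.
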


\begin{proof}
We show that the expression on the right-hand side of \eqref{eq: s-Schouten-formula},
\begin{equation*}
[[\mathcal{X},\mathcal{Y}]]:=\tr(\iota_\star\mathcal{X}\odot\nabla_\star\mathcal{Y}+\nabla_\star\mathcal{X}\odot\iota_\star\mathcal{Y}),
\end{equation*}
satisfies all the axioms of Definition \ref{def: s-Schouten}. The result will then follow from the uniqueness of the symmetric Schouten bracket. The bracket $[[\,\,,\,]]$ is clearly an $\R$-bilinear map satisfying $[[\mathcal{X},\mathcal{Y}]]=[[\mathcal{Y},\mathcal{X}]]$. Moreover,
\begin{align*}
 [[X,f]]&=\tr(X\otimes \dif f)=Xf,\\
 [[X,Y]]&=\tr(X\otimes\nabla Y+\nabla X\otimes Y)=\pg{X,Y}_s. 
\end{align*}
Since $[[\mathcal{X},\,\,]]:\symall\rightarrow\symall$ is a degree-$(r-1)$ linear map for $\mathcal{X}\in\mathfrak{X}^r_\text{sym}(M)$, it remains to check that $[[\mathcal{X},\,\,]]$ is a derivation. 
This follows easily from the associativity of the symmetric product and the fact that both the contraction by a $1$-form and the covariant derivative are derivations of the algebra of symmetric multivector fields.
\end{proof}

In Appendix \ref{app: compl-s-Schouten}, we use the symmetric Schouten bracket to characterize the Killing tensors of a \mbox{(pseudo-)Rie}mannian metric and also show that it can be interpreted as a derived bracket. 

\section{Symmetric Poisson structures}

We now introduce integrability conditions based on the analogy between classical and symmetric Cartan calculus. Since there is a background connection $\nabla$ (which, as we recalled, we can always assume to be torsion-free), we will include it in our definitions. 

\subsection{Definition of a symmetric Poisson structure} We first make use of the symmetric Schouten bracket.

\begin{definition}
 We call a pair $(\vartheta,\nabla)$ consisting of $\vartheta\in\symbi$ and a torsion-free connection $\nabla$ on $M$ a \textbf{symmetric Poisson structure} if $[\vartheta,\vartheta]_s=0$.
\end{definition}

As we saw in Section \ref{sec:int-Poisson}, the vanishing of any Jacobiator is only possible for the zero bracket. However, we can still explore what the condition $[\vartheta,\vartheta]_s=0$ means in terms of the symmetric Poisson bracket $\{\, , \}$.

\begin{proposition}\label{prop: sym-Poisson-bracket}
Given $\vartheta\in\mathfrak{X}^2_\emph{sym}(M)$ and a torsion-free connection $\nabla$ on $M$. For $\alpha$, $\beta$, $\eta\in\Omega^1(M)$, we have
\begin{equation}\label{eq:theta-theta-in-terms-of-nabla}
\frac{1}{2}[\vartheta,\vartheta]_{s}(\alpha,\beta,\eta)=(\nabla_{\vartheta(\alpha)} \vartheta)(\beta,\eta)+\cyc(\alpha,\beta,\eta)
\end{equation}
Moreover, $(\vartheta,\nabla)$ is a symmetric Poisson structure if and only if we have the following for the cyclic Jacobiatior of the corresponding symmetric Poisson bracket:
\begin{equation}\label{eq:int-sym-Poisson-Jac}
\jac(f,\emph{g},h)=\dif h(\pg{X_f,X_\emph{g}}_s)+\cyc(f,\emph{g},h)
\end{equation}
\end{proposition}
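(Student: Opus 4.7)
The plan is to establish (2.15) directly from the explicit description of the symmetric Schouten bracket in Proposition \ref{prop: s-Schouten-explicit}, and then to derive the equivalence (2.16) by expanding $\{f,\{g,h\}\}$ and $\dif h([X_f,X_g]_s)$ with the Leibniz rule, so that the Hessian-like remainder terms match and cancel.

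For (2.15), I apply Proposition \ref{prop: s-Schouten-explicit} with $\mathcal{X}=\mathcal{Y}=\vartheta$; by commutativity of $\odot$ this gives $[\vartheta,\vartheta]_s = 2\tr(\iota_\star\vartheta\odot\nabla_\star\vartheta)$. In a local frame $\{e_i\}$ with dual coframe $\{e^i\}$, the trace becomes $\sum_i \vartheta(e^i)\odot\nabla_{e_i}\vartheta$. Evaluating on $(\alpha,\beta,\eta)$ by the shuffle rule $(X\odot T)(\alpha,\beta,\eta)=X(\alpha)T(\beta,\eta)+X(\beta)T(\alpha,\eta)+X(\eta)T(\alpha,\beta)$ produces three terms, each of the form $\sum_i \vartheta(\alpha,e^i)(\nabla_{e_i}\vartheta)(\beta,\eta)$. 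The $\cCi(M)$-linearity of $\nabla_X$ in $X$ together with $\sum_i \vartheta(\alpha,e^i)e_i=\vartheta(\alpha)$ collapses each such sum to $(\nabla_{\vartheta(\alpha)}\vartheta)(\beta,\eta)$, and after using the symmetry of $\vartheta$ to relabel arguments the cyclic formula in (2.15) appears.

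For (2.16), I first use the Leibniz rule for $\nabla$ on the scalar $\vartheta(\dif g,\dif h)$, converting contractions $\vartheta(\nabla_{X_f}\dif g,\dif h)$ into $(\nabla_{X_f}\dif g)(X_h)$ via $\vartheta(\mu,\dif k)=\mu(X_k)$, to obtain
\[
\{f,\{g,h\}\}=(\nabla_{X_f}\vartheta)(\dif g,\dif h)+(\nabla_{X_f}\dif g)(X_h)+(\nabla_{X_f}\dif h)(X_g).
\]
Summing over cyclic permutations of $(f,g,h)$, the first batch becomes $\tfrac{1}{2}[\vartheta,\vartheta]_s(\dif f,\dif g,\dif h)$ by (2.15), while the six remaining terms, reduced by the Hessian symmetry $(\nabla_X\dif\phi)(Y)=(\nabla_Y\dif\phi)(X)$ (valid because $\nabla$ is torsion-free), collapse into $R:=2\bigl[(\nabla_{X_f}\dif h)(X_g)+(\nabla_{X_g}\dif f)(X_h)+(\nabla_{X_h}\dif g)(X_f)\bigr]$. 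In parallel, I expand $\dif h(\nabla_XY)=X(Yh)-(\nabla_X\dif h)(Y)$ on $[X_f,X_g]_s=\nabla_{X_f}X_g+\nabla_{X_g}X_f$, recognise two copies of $\jac$ in the cyclic sum (using the symmetry of the symmetric Poisson bracket), and obtain $\sum_\cyc \dif h([X_f,X_g]_s)=2\jac(f,g,h)-R$. Eliminating $R$ between the two identities gives
\[
\jac(f,g,h)-\sum_\cyc \dif h([X_f,X_g]_s)=-\tfrac{1}{2}[\vartheta,\vartheta]_s(\dif f,\dif g,\dif h).
\]
Since exact $1$-forms span each cotangent space $T^*_mM$, the vanishing of this symmetric $3$-tensor on all triples of differentials is equivalent to $[\vartheta,\vartheta]_s=0$, yielding the claimed equivalence.

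The main bookkeeping obstacle is aligning the six Hessian-like terms that appear on both sides so that they are recognised as the same quantity $R$; this is where torsion-freeness enters crucially, since without the symmetry of the Hessian the remainders would not be proportional and the elimination would fail.
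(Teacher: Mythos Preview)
Your proof is correct. For \eqref{eq:theta-theta-in-terms-of-nabla} your argument is essentially the paper's: both invoke Proposition~\ref{prop: s-Schouten-explicit} and identify $\tr(\iota_\star\vartheta\odot\nabla_\star\vartheta)(\alpha,\beta,\eta)$ with the cyclic sum of $(\nabla_{\vartheta(\alpha)}\vartheta)(\beta,\eta)$; you just spell out the frame computation that the paper compresses into the line $\tfrac{3!}{2}(\sym\nabla_{\vartheta(\cdot)}\vartheta)$.

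For \eqref{eq:int-sym-Poisson-Jac} you take a genuinely different route. The paper works by polarization: it observes that $[\vartheta,\vartheta]_s=0$ is equivalent, via \eqref{eq:theta-theta-in-terms-of-nabla} and local spanning by exact forms, to $(\nabla_{X_f}\vartheta)(\dif f,\dif f)=0$ for all $f$, rewrites this single-function condition as $\{f,\{f,f\}\}=\dif f([X_f,X_f]_s)$, and then polarizes to recover the trilinear statement. Your argument instead establishes the unconditional identity
\[
\jac(f,\cg,h)-\sum_{\cyc}\dif h([X_f,X_\cg]_s)=-\tfrac{1}{2}[\vartheta,\vartheta]_s(\dif f,\dif \cg,\dif h)
\]
for \emph{all} triples, by expanding both sides and matching the six Hessian terms (your $R$). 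This is slightly more informative than the paper's equivalence, since it exhibits the obstruction explicitly as $[\vartheta,\vartheta]_s$ evaluated on exact forms, and it avoids the polarization step entirely. The price is the bookkeeping you flag at the end; the paper's polarization trick sidesteps that by collapsing all six Hessian terms to a single one. Both arguments hinge on torsion-freeness through the symmetry of $\nabla\dif\phi$.
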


\begin{proof}
 By Proposition \ref{prop: s-Schouten-explicit}, we get 
 \begin{align*}
 \frac{1}{2}[\vartheta,\vartheta]_s(\alpha,\beta,\eta)&=\tr(\iota_\star\vartheta\odot\nabla_\star\vartheta)(\alpha,\beta,\eta)=\frac{3!}{2}(\sym\nabla_{\vartheta(\,\,)}\vartheta)(\alpha,\beta,\eta)\\
 &=(\nabla_{\vartheta(\alpha)}\vartheta)(\beta,\eta)+\cyc(\alpha,\beta,\eta).
 \end{align*}
 From this, the fact that $\Omega^1(M)$ is locally generated by exact $1$-forms and polarization, it follows that $(\vartheta,\nabla)$ is a symmetric Poisson structure if and only if
 \begin{equation*}
 0=(\nabla_{X_f}\vartheta)(\dif f,\dif f)=X_f(\po{f,f})-2\vartheta(\nabla_{X_f}\dif f,\dif f).
 \end{equation*}
 The first term on the right-hand side is equal to $\po{f,\po{f,f}}$, whereas for the second term we have the following:
 \begin{equation*}
 \vartheta(\nabla_{X_f}\dif f,\dif f)=(\nabla_{X_f}\dif f)(X_f)=\po{f,\po{f,f}}-\dif f(\nabla_{X_f}X_f).
 \end{equation*}
Therefore, $(\vartheta,\nabla)$ is a symmetric Poisson structure if and only if
\begin{equation*}
 \po{f,\po{f,f}}=\dif f(\pg{X_f,X_f}_s).
\end{equation*}
The result follows by polarization.
\end{proof}

Proposition \ref{prop: sym-Poisson-bracket} allows us to give an equivalent definition of a symmetric Poisson structure using the bracket instead of the symmetric bivector field. It is a pair $(\lbrace\,,\rbrace,\nabla)$ consisting of an $\R$-bilinear map 
\[\{ \,,\}: \cCi(M)\times\cCi(M)\rightarrow \cCi(M)\]
and a torsion-free connection $\nabla$ on $M$ such that
 \begin{itemize}
 \item $\{ f,\cg\}=\{ \cg,f\}$,
\item $\{ f,\cg h\}=\{ f,\cg\} h+\cg\{ f,h\} $,
 \item $\jac(f,\cg,h)=\dif h(\pg{X_f,X_\cg}_s)+\cyc(f,\cg,h)$.
\end{itemize}

\begin{remark}
 Given a skew-symmetric bivector field $\pi\in\mathfrak{X}^2(M)$, we have 
 \begin{equation*}
 [\pi,\pi](\alpha,\beta,\eta)=-(\nabla_{\pi(\alpha)}\pi)(\beta,\eta)+\cyc(\alpha,\beta,\eta)
 \end{equation*}
 for any choice of a torsion-free connection $\nabla$ on $M$. Therefore, $\pi$ is a Poisson structure if and only if
 \begin{equation}\label{eq: Poisson-nabla}
 (\nabla_{\pi(\alpha)}\pi)(\beta,\eta)+\cyc(\alpha,\beta,\eta)=0,
 \end{equation}
 which is analogous to the vanishing of \eqref{eq:theta-theta-in-terms-of-nabla}. In contrast to the symmetric case, the choice of $\nabla$ here is auxiliary (and the vanishing of \eqref{eq: Poisson-nabla} is independent of it).
\end{remark}

\subsection{Strong symmetric Poisson structures} Following our strategy to define integrability for $(\vartheta,\nabla)$, the condition that $\Ham:\cCi(M)\rightarrow\mathfrak{X}(M)$ is an algebra morphism is translated into the language of symmetric Cartan calculus as 
\[\grad:(\cCi(M),\lbrace\,,\rbrace)\rightarrow (\mathfrak{X}(M),\pg{\,\,,\,}_{s})\]
being an algebra morphism, that is, $X_{\{ f,\cg\}}=\pg{X_f,X_\cg}_{s}$. It happens if and only if
\[\po{\po{f,\cg},h}=\dif h(\pg{X_f,X_\cg}_{s}),\]
which is, a priori, a stronger condition than \eqref{eq:int-sym-Poisson-Jac} for symmetric Poisson structures.

\begin{definition}\label{def: ssPs}
We call a pair $(\lbrace\,,\rbrace,\nabla)$ consisting of an $\R$-bilinear map
\begin{equation*}
\{ \,,\}:\cCi(M)\times\cCi(M)\rightarrow \cCi(M)
\end{equation*}
 and a torsion-free connection $\nabla$ on $M$ a \textbf{strong symmetric Poisson structure}~if 
\begin{itemize}
\item $\{ f,\cg\}=\{ \cg,f\}$,
\item $\{ f,\cg h\}=\{ f,\cg\} h+\cg\{ f,h\}$,
\item $X_{\{ f,\,\cg\}}=\pg{X_f,X_\cg}_{s}$.\label{ssP3}
\end{itemize}
\end{definition}

We can also describe these in terms of the associated bivector field.

\begin{proposition}\label{prop: strong-sym-Poisson-bivector}
A strong symmetric Poisson structure is equivalently given by a pair $(\vartheta,\nabla)$ consisting of $\vartheta\in \mathfrak{X}^2_\emph{sym}(M)$ and a torsion-free connection $\nabla$ on $M$ such that
\begin{equation*}
\nabla_{X_f}\vartheta=0.
\end{equation*}
\end{proposition}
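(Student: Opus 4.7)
The plan is to first observe that the first two conditions of Definition \ref{def: ssPs} characterize the bracket as $\{f,\cg\} = \vartheta(\dif f,\dif \cg)$ for a unique $\vartheta\in\symbi$, via the one-to-one correspondence recorded right after \eqref{eq:sym-bracket}. The equivalence then reduces to showing that, given such a $\vartheta$ and a torsion-free $\nabla$, the third bullet $X_{\{f,\,\cg\}}=[X_f,X_\cg]_s$ holds for all $f,\cg\in\cCi(M)$ if and only if $\nabla_{X_f}\vartheta=0$ for all $f\in\cCi(M)$.

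To compare $X_{\{f,\,\cg\}}$ with $[X_f,X_\cg]_s$, I will evaluate both against an arbitrary $\dif h$. Using the symmetry of $\vartheta$, $\dif h(X_{\{f,\,\cg\}}) = X_h(\vartheta(\dif f,\dif \cg))$, and the Leibniz rule for $\nabla$ expands this into
\begin{equation*}
 \dif h(X_{\{f,\,\cg\}})=(\nabla_{X_h}\vartheta)(\dif f,\dif \cg)+\vartheta(\nabla_{X_h}\dif f,\dif \cg)+\vartheta(\dif f,\nabla_{X_h}\dif \cg).
\end{equation*}
On the other side, expanding $[X_f,X_\cg]_s=\nabla_{X_f}(\vartheta(\dif \cg))+\nabla_{X_\cg}(\vartheta(\dif f))$ by the Leibniz rule and applying $\dif h$ yields
\begin{equation*}
 \dif h([X_f,X_\cg]_s)=(\nabla_{X_f}\vartheta)(\dif \cg,\dif h)+(\nabla_{X_\cg}\vartheta)(\dif f,\dif h)+\vartheta(\dif h,\nabla_{X_f}\dif \cg)+\vartheta(\dif h,\nabla_{X_\cg}\dif f).
\end{equation*}
I expect the two pairs of $\vartheta(\nabla_\star\dif\star)$ terms to cancel using torsion-freeness (equivalently, the symmetry of $\nabla\dif f$) together with the symmetry of $\vartheta$: for instance,
\begin{equation*}
 \vartheta(\nabla_{X_h}\dif f,\dif \cg)=(\nabla_{X_h}\dif f)(X_\cg)=(\nabla_{X_\cg}\dif f)(X_h)=\vartheta(\dif h,\nabla_{X_\cg}\dif f).
\end{equation*}
Subtracting should leave the clean identity
\begin{equation*}
 \dif h\bigl(X_{\{f,\,\cg\}}-[X_f,X_\cg]_s\bigr)=(\nabla_{X_h}\vartheta)(\dif f,\dif \cg)-(\nabla_{X_f}\vartheta)(\dif \cg,\dif h)-(\nabla_{X_\cg}\vartheta)(\dif f,\dif h).\qquad(\star)
\end{equation*}

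From $(\star)$ the implication $\nabla_{X_f}\vartheta=0\Rightarrow X_{\{f,\,\cg\}}=[X_f,X_\cg]_s$ is immediate. For the converse I will run a short polarization. Setting $S(\alpha,\beta;\omega):=(\nabla_{\vartheta(\omega)}\vartheta)(\alpha,\beta)$, which is symmetric in $\alpha,\beta$, and using that $\Omega^1(M)$ is locally generated by exact $1$-forms, the vanishing of $(\star)$ for all $f,\cg,h$ reads $S(\alpha,\beta;\omega)=S(\beta,\omega;\alpha)+S(\alpha,\omega;\beta)$. Swapping $\alpha\leftrightarrow\omega$ and using the symmetry of $S$ in its first two slots gives $S(\beta,\omega;\alpha)=S(\alpha,\beta;\omega)+S(\alpha,\omega;\beta)$; adding these two identities eliminates both $S(\alpha,\beta;\omega)$ and $S(\beta,\omega;\alpha)$, yielding $2S(\alpha,\omega;\beta)=0$ and hence $\nabla_{X_f}\vartheta=0$ for all $f$.

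The step I expect to be the main obstacle is the pairwise cancellation producing $(\star)$: stating it cleanly requires combining the torsion-free identity $(\nabla_X\dif f)(Y)=(\nabla_Y\dif f)(X)$ with the symmetry of $\vartheta$ to see that each of the two mixed pairs indeed cancels. Once $(\star)$ is established, the remainder is a direct unwinding of the derivation property and a three-line polarization.
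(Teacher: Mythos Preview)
Your proof is correct and follows essentially the same approach as the paper. The paper arrives at your identity $(\star)$ by first specializing to $f=\cg$ and then polarizing, whereas you derive it directly for general $f,\cg$; after that, both proofs finish with the same swap-and-add argument (the paper swaps $\cg$ and $h$ in its equation \eqref{eq: ssPS-proof}, which is exactly your swap $\alpha\leftrightarrow\omega$).
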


\begin{proof}
For every $f,\cg\in\cCi(M)$, we have
\begin{align*}
 \dif \cg(X_{\po{f,f}}) &=X_\cg(\vartheta(\dif f,\dif f))=(\nabla_{X_\cg}\vartheta)(\dif f,\dif f)+2\vartheta(\nabla_{X_\cg}\dif f,\dif f)\\
 &=(\nabla_{X_\cg}\vartheta)(\dif f,\dif f)+2(\nabla_{X_\cg}\dif f)(X_f),
\end{align*}
while on the other hand,
\begin{align*}
 \dif \cg(\pg{X_f,X_f}_s)&=2\dif \cg(\nabla_{X_f}X_f)=2(\nabla_{X_f}\vartheta)(\dif f,\dif \cg)+2\vartheta(\nabla_{X_f}\dif f,\dif \cg)\\
 &=2(\nabla_{X_f}\vartheta)(\dif f,\dif \cg)+2(\nabla_{X_f}\dif f)(X_\cg).
\end{align*}
Using the torsion-freeness of $\nabla$, that is, $(\nabla_X \dif h)(Y)=(\nabla_Y\dif h)(X)$, we get
\begin{equation*}
 \dif \cg(X_{\po{f,f}}-\pg{X_f,X_f}_s)= (\nabla_{X_\cg}\vartheta)(\dif f,\dif f)-2(\nabla_{X_f}\vartheta)(\dif f,\dif \cg).
\end{equation*}
By polarization, the gradient map $\grad:\cCi(M)\rightarrow\mathfrak{X}(M):f\mapsto X_f$ is an algebra morphism if and only if
\begin{equation}\label{eq: ssPS-proof}
 (\nabla_{X_\cg}\vartheta)(\dif f,\dif h)=(\nabla_{X_f}\vartheta)(\dif \cg,\dif h)+(\nabla_{X_h}\vartheta)(\dif \cg,\dif f).
\end{equation}
Finally, swapping the role of $\cg$ and $h$ in \eqref{eq: ssPS-proof} and adding these two equations gives the result.
\end{proof}

As exact $1$-forms locally generate the space $\Omega^1(M)$, we can, using Proposition \ref{prop: strong-sym-Poisson-bivector}, formulate the integrability condition for strong symmetric Poisson as follows.

\begin{corollary}\label{cor: ssPs-integrability}
 A pair $(\vartheta,\nabla)$ is a strong symmetric Poisson structure if and only if $\nabla_{\vartheta(\alpha)}\vartheta=0$ for all $\alpha\in \Omega^1(M)$. 
 \end{corollary}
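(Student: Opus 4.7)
The plan is to deduce the corollary directly from Proposition \ref{prop: strong-sym-Poisson-bivector} by exploiting the $\cC^\infty(M)$-linearity of the map $\alpha\mapsto \nabla_{\vartheta(\alpha)}\vartheta$. The ``if'' direction is immediate: if $\nabla_{\vartheta(\alpha)}\vartheta=0$ for every $\alpha\in\Omega^1(M)$, then in particular for every $f\in\cC^\infty(M)$ we have $\nabla_{X_f}\vartheta=\nabla_{\vartheta(\dif f)}\vartheta=0$, and Proposition \ref{prop: strong-sym-Poisson-bivector} gives that $(\vartheta,\nabla)$ is a strong symmetric Poisson structure.

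For the converse, I would assume $(\vartheta,\nabla)$ is a strong symmetric Poisson structure and consider the map
\begin{equation*}
T:\Omega^1(M)\to \Gamma(\Sym^2TM),\qquad T(\alpha):=\nabla_{\vartheta(\alpha)}\vartheta.
\end{equation*}
Since $\vartheta$ is a tensor field, $\vartheta(h\alpha)=h\vartheta(\alpha)$, and since the covariant derivative is $\cC^\infty(M)$-linear in its direction argument, $\nabla_{hY}\vartheta=h\nabla_Y\vartheta$. Combining these, $T$ is $\cC^\infty(M)$-linear. By Proposition \ref{prop: strong-sym-Poisson-bivector}, $T$ vanishes on every exact $1$-form.

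To conclude, around every point $m\in M$, I would choose local coordinates $(x^1,\ldots,x^n)$ in which any $1$-form is written as $\alpha=\alpha_i\dif x^i$ with $\alpha_i\in\cC^\infty(U)$. By $\cC^\infty$-linearity of $T$ and the vanishing on exact forms, $T(\alpha)=\alpha_i T(\dif x^i)=0$ on $U$. Since $T(\alpha)$ is a tensor field, the vanishing at every point of $M$ yields $T(\alpha)\equiv 0$ globally, which is the claim. No genuine obstacle is expected here; the only point to be careful about is that while the condition in Proposition \ref{prop: strong-sym-Poisson-bivector} is formulated only for exact $1$-forms (which are not a $\cC^\infty(M)$-submodule of $\Omega^1(M)$), the $\cC^\infty(M)$-linearity of $T$ together with the local $\cC^\infty$-generation of $\Omega^1(M)$ by exact forms makes the extension automatic.
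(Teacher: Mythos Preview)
Your proof is correct and follows precisely the approach indicated in the paper: the sentence preceding the corollary states that exact $1$-forms locally generate $\Omega^1(M)$, and combining this with Proposition~\ref{prop: strong-sym-Poisson-bivector} gives the result via the $\cC^\infty(M)$-linearity you spelled out. Your argument simply makes explicit what the paper leaves as a one-line remark.
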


The following example is the analogue of the zero Poisson structure.

\begin{example}[Torsion-free connections]\label{ex: tf-connections}
 Trivially, every manifold carries a family of strong symmetric Poisson structures
 \begin{equation*}
 \{ (0,\nabla)\,|\,\nabla \text{ is a torsion-free connection on }M\}.
 \end{equation*}
\end{example}

By Corollary \ref{cor: ssPs-integrability}, if the bivector field $\vartheta$ is parallel with respect to a torsion-free connection $\nabla$, that is, $\nabla\vartheta=0$, the pair $(\vartheta,\nabla)$ is a strong symmetric Poisson structure. This gives the following hierarchy of the integrability conditions:
\begin{equation*}
 \left\{\begin{array}{c}
\text{parallel symmetric}\\
 \text{bivector fields}
 \end{array}\right\}\subseteq\left\{\begin{array}{c}
\text{strong symmetric}\\
 \text{Poisson structures}
 \end{array}\right\}\subseteq\left\{\begin{array}{c}
\text{symmetric}\\
 \text{Poisson structures}
 \end{array}\right\}.
\end{equation*}

\begin{remark}\label{rk: parallel}
Note that the condition of being parallel is quite restrictive. In particular, it implies that $\vartheta:T^*M\rightarrow TM$ is of constant rank, hence, singularities, which make classical Poisson geometry so fruitful, would not be possible.
\end{remark}

Let us now show that the inclusions are actually strict.

\begin{example}\label{ex: inclusion}
 Consider the Euclidean connection $\nabla^\emph{Euc}$ on the cartesian plane $\R^2$ and $\vartheta\in \mathfrak{X}^2_\emph{sym}(\R^2)$ given by 
\begin{equation*}
\vartheta:=h\,\partial_x\otimes\partial_x,
\end{equation*}
where $h\in\cCi(\R^2)$ is an arbitrary non-constant function depending only on the variable $y$. As $\nabla^\emph{Euc}_{\partial_y}\vartheta=\frac{\partial h}{\partial y}\,\partial_x\otimes\partial_x\neq 0$, the bivector field is not parallel. However, for $f\in\cCi(\R^2)$,
\begin{equation*}
 \nabla^\emph{Euc}_{X_f}\vartheta=h\frac{\partial f}{\partial x}\nabla^\emph{Euc}_{\partial_x}\vartheta=0,
\end{equation*}
so the pair $(\vartheta,\nabla^\emph{Euc})$ is a strong symmetric Poisson structure on $\R^2$.
\end{example}

\begin{example}\label{ex: SO(3)}
We consider the natural connection $\nabla$ on $\SO(3)$ determined by the condition that all left-invariant vector fields are parallel. Let $(X_1,X_2,X_3)$ be the standard basis of left-invariant vector fields on $\SO(3)$ satisfying
\begin{equation*}
 [X_i,X_j]=\varepsilon_{ijk}X_k,
\end{equation*}
where $\varepsilon_{ijk}$ is the Levi-Civita symbol. We define $\vartheta\in\mathfrak{X}^2_\emph{sym}(\SO(3))$ by
\begin{equation*}
 \vartheta=X_1\otimes X_1+X_2\otimes X_2.
\end{equation*}
As $\pg{X_i,X_j}_s=0$, it follows easily from \eqref{eq: sym-Schouten-decomposable} that $[\vartheta,\vartheta]_s=0$, that is, $(\vartheta,\nabla^0)$ is a symmetric Poisson structure on $\SO(3)$. The associated torsion-free connection to $\nabla$ is given by
\begin{equation*}
 \nabla^0_{X_i}X_j=\frac{1}{2}[X_i,X_j]=\frac{1}{2}\varepsilon_{ijk}X_k.
 \end{equation*}
 As a covariant derivative is a derivation of the symmetric product, we find
 \begin{equation*}
 \nabla^0_{X_1}\vartheta=2(\nabla^0_{X_1}X_1)\odot X_1+2(\nabla^0_{X_1}X_2)\odot X_2=X_3\odot X_2\neq 0.
 \end{equation*}
 By Corollary \ref{cor: ssPs-integrability} and the fact that $X_1=\vartheta(\alpha)$, where $\alpha\in\Omega^1(\SO(3))$ is given by $\alpha(X_i):=\delta_{1i}$, we get that $(\vartheta,\nabla^0)$ is not a strong symmetric Poisson structure.
\end{example}

\subsection{Non-degenerate symmetric Poisson structures}\label{sec:non-deg-sym-Poisson}

 We consider now \mbox{non-deg}enerate structures. Apart from being a meaningful family of examples, they establish connections of symmetric Poisson geometry with Killing tensors and \mbox{(pseudo-)Rie}mannian geometry.

A non-degenerate symmetric bivector field $\vartheta\in\symbi$ determines the \mbox{non-deg}enerate symmetric $2$-form $\vartheta^{-1}\in\Upsilon^2(M)$, i.e., a \mbox{(pseudo-)Rie}mannian metric on $M$, and vice versa. If we look at the integrability condition, we have the following.

\begin{proposition}\label{prop: sym-Poisson-nondeg}
The assignment $(\vartheta,\nabla)\mapsto(\vartheta^{-1},\nabla)$ gives a bijection:
\begin{equation*}
\left\{\begin{array}{c}
\text{non-degenerate symmetric}\\
 \text{Poisson structures}
 \end{array}\right\}\overset{\sim}{\longleftrightarrow}\left\{\begin{array}{c}
 \text{non-degenerate}\\
 \text{Killing }2\text{-tensors}
 \end{array}\right\},
\end{equation*}
where we attach the connection for which the $2$-tensor is Killing.
\end{proposition}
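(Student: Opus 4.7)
The plan is to observe that the map $\vartheta\mapsto\vartheta^{-1}$ is already a bijection between non-degenerate elements of $\symbi$ and non-degenerate elements of $\Upsilon^2(M)$, since a non-degenerate symmetric bivector field is interchangeable with a \mbox{(pseudo-)Rie}mannian metric and double inversion is the identity. Keeping the same torsion-free connection $\nabla$ on both sides, the proposition thus reduces to the equivalence
\begin{equation*}
 [\vartheta,\vartheta]_s=0 \iff \nabla^s\vartheta^{-1}=0.
\end{equation*}

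First, I would rewrite the left-hand side, via Proposition \ref{prop: sym-Poisson-bracket}, as the vanishing of the cyclic expression $(\nabla_{\vartheta(\alpha)}\vartheta)(\beta,\eta)+\cyc(\alpha,\beta,\eta)$ for every $\alpha,\beta,\eta\in\Omega^1(M)$, and I would unwind $\nabla^s\vartheta^{-1}=0$ via \eqref{eq: sym-der-def} into the vanishing of $(\nabla_X\vartheta^{-1})(Y,Z)+\cyc(X,Y,Z)$ for every $X,Y,Z\in\mathfrak{X}(M)$. The bridge between the two is the pointwise identity
\begin{equation*}
 (\nabla_X\vartheta^{-1})(\vartheta(\beta),\vartheta(\eta))=-(\nabla_X\vartheta)(\beta,\eta),
\end{equation*}
which I would obtain by covariantly differentiating the defining relation $\vartheta^{-1}(\vartheta(\alpha),\,\cdot\,)=\alpha$ along $X$ and applying the Leibniz rule. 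Substituting $X=\vartheta(\alpha)$ and summing cyclically in $\alpha,\beta,\eta$ converts one cyclic sum into $(-1)$ times the other.

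Since $\vartheta$ is non-degenerate, the map $\vartheta:\Omega^1(M)\to\mathfrak{X}(M)$ is an isomorphism of $\cCi(M)$-modules, so the cyclic identity holds for all triples of $1$-forms $\alpha,\beta,\eta$ if and only if the corresponding identity holds for all triples of vector fields $X,Y,Z$. This establishes the equivalence and hence the stated bijection, with $\vartheta^{-1}\in\kil^2_\nabla(M)$ automatically non-degenerate. The only delicate point is the sign and precise form of the bridging identity relating $\nabla\vartheta$ and $\nabla\vartheta^{-1}$; beyond this routine Leibniz-rule computation, no real obstacle appears.
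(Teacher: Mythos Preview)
Your proposal is correct and follows essentially the same route as the paper: both reduce the claim, via Proposition \ref{prop: sym-Poisson-bracket} and non-degeneracy, to the Leibniz-rule identity $(\nabla_X\vartheta^{-1})(\vartheta(\beta),\vartheta(\eta))=-(\nabla_X\vartheta)(\beta,\eta)$ (equivalently $(\nabla_X\vartheta)(\vartheta^{-1}(Y),\vartheta^{-1}(Z))=-(\nabla_X\vartheta^{-1})(Y,Z)$), then sum cyclically. The only cosmetic difference is that the paper parametrizes by vector fields $X,Y,Z$ and substitutes $\alpha=\vartheta^{-1}(X)$, whereas you parametrize by $1$-forms and substitute $X=\vartheta(\alpha)$.
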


\begin{proof}
It follows from Proposition \ref{prop: sym-Poisson-bracket} and the non-degeneracy of $\vartheta$ that $(\vartheta,\nabla)$ is a symmetric Poisson structure if and only if
\begin{equation*}
 0=(\nabla_{X}\vartheta)(\vartheta^{-1}(Y),\vartheta^{-1}(Z))+\cyc(X,Y,Z)
\end{equation*}
for every $X,Y,Z\in\mathfrak{X}(M)$. For each term, we have
\begin{equation}\label{eq: inverse-parallel}
\begin{split}
(\nabla_{X}\vartheta)&(\vartheta^{-1}(Y),\vartheta^{-1}(Z))\\
&=\vartheta^{-1}(\nabla_XY,Z)-(\nabla_X\vartheta^{-1}(Y))(Z)\\
&=\vartheta^{-1}(\nabla_XY,Z)-(\nabla_X\vartheta^{-1})(Y,Z)-\vartheta^{-1}(\nabla_XY,Z)\\
 &=-(\nabla_X\vartheta^{-1})(Y,Z).
\end{split}
\end{equation}
Therefore, being a symmetric Poisson structure reads, in terms of the inverse symmetric $2$-form, as
\begin{equation*}
 0=(\nabla_X\vartheta^{-1})(Y,Z)+\cyc(X,Y,Z)=(\nabla^s\vartheta^{-1})(X,Y,Z),
\end{equation*}
that is, $\vartheta^{-1}\in\kil^2_\nabla(M)$.
\end{proof}

Similarly, non-degenerate strong symmetric Poisson structures can be identified with \mbox{(pseudo-)Rie}mannian metrics.

\begin{proposition}\label{prop: ssPs-nondeg}
 Let $\vartheta\in\mathfrak{X}^2_\emph{sym}(M)$ be non-degenerate. Then $(\vartheta,\nabla)$ is a strong symmetric Poisson structure if and only if $\nabla$ is the Levi-Civita connection of $\vartheta^{-1}$. Consequently, the assignment $(\vartheta,\nabla)\mapsto\vartheta^{-1}$ gives a bijection:
\begin{equation*}
\left\{\begin{array}{c}
\text{non-degenerate strong}\\
 \text{symmetric Poisson structures}
 \end{array}\right\}\overset{\sim}{\longleftrightarrow}\left\{\begin{array}{c}
 \text{\mbox{(pseudo-)Rie}mannian}\\
 \text{metrics}
 \end{array}\right\}.
\end{equation*}
\end{proposition}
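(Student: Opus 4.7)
The plan is to leverage Corollary \ref{cor: ssPs-integrability} together with the identity \eqref{eq: inverse-parallel} that was already established in the proof of Proposition \ref{prop: sym-Poisson-nondeg}, and then invoke the uniqueness of the Levi-Civita connection.

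First, I would rewrite the strong symmetric Poisson condition using Corollary \ref{cor: ssPs-integrability}: $(\vartheta,\nabla)$ is strong symmetric Poisson if and only if $\nabla_{\vartheta(\alpha)}\vartheta=0$ for every $\alpha\in\Omega^1(M)$. Since $\vartheta$ is non-degenerate, the map $\vartheta:T^*M\to TM$ is a bundle isomorphism, so as $\alpha$ ranges over $\Omega^1(M)$, $\vartheta(\alpha)$ ranges over all of $\mathfrak{X}(M)$. Hence the condition is equivalent to $\nabla\vartheta=0$, i.e.\ $\vartheta$ is $\nabla$-parallel.

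Next, I would transfer parallelism to the inverse. The computation \eqref{eq: inverse-parallel} in the proof of Proposition \ref{prop: sym-Poisson-nondeg} yields, for every $X,Y,Z\in\mathfrak{X}(M)$, the pointwise identity
\begin{equation*}
(\nabla_{X}\vartheta)(\vartheta^{-1}(Y),\vartheta^{-1}(Z))=-(\nabla_X\vartheta^{-1})(Y,Z).
\end{equation*}
Since $\vartheta$ is non-degenerate, the vanishing of the left-hand side for all $X,Y,Z$ is equivalent to $\nabla\vartheta^{-1}=0$. So the strong symmetric Poisson condition is equivalent to $\vartheta^{-1}$ being $\nabla$-parallel.

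Finally, I would conclude using the fundamental theorem of (pseudo-)Riemannian geometry. The connection $\nabla$ in the definition of a strong symmetric Poisson structure is required to be torsion-free, and we have just shown that $\nabla\vartheta^{-1}=0$. By the uniqueness of the Levi-Civita connection of the (pseudo-)Riemannian metric $\vartheta^{-1}$, this forces $\nabla$ to be exactly that Levi-Civita connection. Conversely, if $\nabla$ is the Levi-Civita connection of $\vartheta^{-1}$, then it is torsion-free with $\nabla\vartheta^{-1}=0$, hence $\nabla\vartheta=0$ by \eqref{eq: inverse-parallel}, which in particular gives $\nabla_{\vartheta(\alpha)}\vartheta=0$ for every $\alpha$. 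The bijection statement then follows immediately: the assignment $(\vartheta,\nabla)\mapsto \vartheta^{-1}$ is well defined by the above, and its inverse sends a (pseudo-)Riemannian metric $g$ to $(g^{-1},\nabla^{\mathrm{LC}})$, where $\nabla^{\mathrm{LC}}$ is its Levi-Civita connection.

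There is no serious obstacle here; the main point is to correctly reuse the linear-algebraic identity \eqref{eq: inverse-parallel} and to observe that non-degeneracy allows us to upgrade the strong symmetric Poisson condition (which a priori only demands $\nabla_{\vartheta(\alpha)}\vartheta=0$) to full parallelism $\nabla\vartheta=0$, which is exactly what makes the Levi-Civita characterization work.
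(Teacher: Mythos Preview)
Your proof is correct and follows essentially the same route as the paper: the paper invokes Proposition \ref{prop: strong-sym-Poisson-bivector} and the fact that gradient vector fields locally generate $\mathfrak{X}(M)$ when $\vartheta$ is non-degenerate, whereas you use the equivalent Corollary \ref{cor: ssPs-integrability} and the surjectivity of $\vartheta:\Omega^1(M)\to\mathfrak{X}(M)$; both then apply \eqref{eq: inverse-parallel} and the uniqueness of the Levi-Civita connection in the same way.
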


\begin{proof}
 As gradient vector fields locally generate the space of all vector fields when $\vartheta$ is non-degenerate, it follows from Proposition \ref{prop: strong-sym-Poisson-bivector} that $(\vartheta, \nabla)$ is a strong symmetric Poisson structure if and only if $\vartheta$ is parallel with respect to $\nabla$. By \eqref{eq: inverse-parallel}, this is equivalent to $\vartheta^{-1}$ being parallel. Since $\nabla$ is torsion-free, it is necessarily the \mbox{Levi-Civ}ita connection of the \mbox{(pseudo-)Rie}mannian metric $\vartheta^{-1}$.
\end{proof}

Thus, under the assumptions of being strong and non-degenerate, the information of a symmetric Poisson structure $(\vartheta,\nabla)$ is all contained in $\vartheta$. Indeed, symmetric Poisson structures can be regarded as a degeneration of \mbox{(pseudo-)Rie}mannian metrics, analogously to how symplectic structures degenerate into standard Poisson structures.

\sloppy The next example shows that, when we are not in the strong case, \mbox{(pseudo-)Rie}mannian metrics can form a symmetric Poisson structure with a connection other than the Levi-Civita one.

\begin{example}\label{ex: non-deg-Kill}
Consider the torsion-free connection $\nabla$ on $\R^2$ given by
\begin{align*}
\nabla_{\partial_x}\partial_x&=\nabla_{\partial_y}\partial_y:=0, & \nabla_{\partial_x}\partial_y&:=\partial_x+\partial_y
\end{align*}
and the split signature metric $g:=(\e^{2y}\,\dif x)\odot (\e^{2x}\,\dif y)$. We find that
\begin{equation*}
 (\nabla_{\partial_x}g)(\partial_y,\partial_y)=-2g(\nabla_{\partial_x}\partial_y,\partial_y)=-2g(\partial_x,\partial_y)=-2\e^{2(x+y)}\neq 0,
\end{equation*}
that is, $\nabla$ is not the Levi-Civita connection of $g$. On the other hand,
\begin{equation*}
 \nabla^s(\e^{2y}\,\dif x)=2\e^{2y}\,\dif y\odot\dif x+\e^{2y}\,\nabla^s\dif x=2\e^{2y}\,\dif y\odot\dif x-2\e^{2y}\,\dif y\odot\dif x=0.
 \end{equation*}
 Analogously, we have $\nabla^s(\e^{2x}\,\dif y)=0$. This means that $\e^{2y}\,\dif x$ and $\e^{2x}\,\dif y$ are Killing 1-tensors, hence $g$ is a Killing $2$-tensor. In particular, by Proposition \ref{prop: sym-Poisson-nondeg}, the pair $(g^{-1},\nabla)$ is indeed a symmetric Poisson structure.
\end{example}

\section{\for{toc}{Geometric interpretation of symmetric Poisson}\except{toc}{Geometric interpretation\\ of symmetric Poisson structures}}\label{sec: geo-int}
In this section, we provide a geometric interpretation of symmetric Poisson structures in terms of distributions 
 and (pseudo-)Riemannian geometry. Some of the proofs of Sections \ref{sec: geo-int} and \ref{sec: geo-int-strong} have been moved to Appendix \ref{app: some-proofs} for convenience.

\subsection{The characteristic distribution/module of a symmetric bivector field}
A symmetric bivector field naturally determines a distribution and a $\cCi(M)$-submodule of $\mathfrak{X}(M)$.

\begin{definition}
 For $\vartheta\in\symbi$, its \textbf{characteristic distribution} is the subset
 \begin{equation*}
 \im\vartheta:=\{ \vartheta(\zeta)\in TM\,|\, \zeta\in T^*M\}\subseteq TM,
 \end{equation*}
and its \textbf{characteristic module} is the $\cCi(M)$-submodule of $\mathfrak{X}(M)$ given by
 \begin{equation*} \mathcal{F}_\vartheta:=\{\vartheta(\alpha)\in\mathfrak{X}(M)\,|\,\alpha\in\Omega^1(M)\}.
\end{equation*}
\end{definition}
By linearity of $\vartheta$, $\im \vartheta$ is indeed a distribution on $M$. As such, it is smooth: given $u\in(\im\vartheta)_m$, any $\zeta\in T^*_mM$ satisfying $\vartheta(\zeta)=u$ can be extended to $\alpha\in\Omega^1(M)$. 

The characteristic distribution is recovered from the characteristic module by
\begin{equation}\label{eq: char-d-m}
 \im\vartheta=\bigcup_{m\in M}\lbrace X_m\in T_mM\,|\,X\in\mathcal{F}_\vartheta\rbrace.
\end{equation}
On the other hand, although $\mathcal{F}_\vartheta$ is a $\cCi(M)$-submodule of the space of global sections $\Gamma(\im\vartheta)$, these modules are not equal in general. 

\begin{example}\label{ex: sing-d}
For $\vartheta:=x^2\,\partial_x\otimes\partial_x\in \mathfrak{X}^2_{\sym}(\R)$, the characteristic distribution is 
 \begin{align*}
 (\im\vartheta)_0&=\{ 0\}, & (\im\vartheta)_x&=T_x\R,
 \end{align*}
 for $x\neq 0$, whereas the characteristic module $\mathcal{F}_\vartheta$ is generated by the vector field $x^2\partial_x$ as a $\cCi(\R)$-module. 
 Note that $x\partial_x\in\Gamma(\im\vartheta)$ but $x\partial_x\not\in \mathcal{F}_\vartheta$.
\end{example}

A key notion in this discussion is regularity.

\begin{definition}
 We call $\vartheta\in\symbi$ \textbf{regular} or \textbf{singular} when the characteristic distribution of $\vartheta$ is regular (constant rank) or singular (otherwise).
\end{definition}

\begin{lemma}\label{lem: reg-d}
 If $\vartheta\in\mathfrak{X}^2_\emph{sym}(M)$ is regular, we have that $\Gamma(\im\vartheta)=\mathcal{F}_\vartheta$. 
\end{lemma}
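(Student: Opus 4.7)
The plan is to prove the equality by double inclusion, where the inclusion $\mathcal{F}_\vartheta\subseteq \Gamma(\im\vartheta)$ is immediate from the definitions, so the real content is the reverse inclusion $\Gamma(\im\vartheta)\subseteq \mathcal{F}_\vartheta$. The strategy is a standard partition-of-unity argument built on the fact that, under the regularity assumption, $\vartheta:T^*M\to TM$ has constant rank and therefore its image is a smooth vector subbundle of $TM$.

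First, I would observe that regularity of $\vartheta$ is equivalent to $\vartheta:T^*M\to TM$ having locally constant rank (since $\im\vartheta$ is the pointwise image). Hence $\im\vartheta\subseteq TM$ is a vector subbundle and $\vartheta:T^*M\to \im\vartheta$ is a surjective morphism of vector bundles over $M$. The kernel $\Ker\vartheta\subseteq T^*M$ is also a vector subbundle, and locally on any trivializing neighbourhood $U$ one can choose a complementary subbundle $C\subseteq T^*M|_U$ so that $\vartheta|_C:C\to \im\vartheta|_U$ is a bundle isomorphism. This provides a local smooth right inverse $s_U:\im\vartheta|_U\to T^*M|_U$ with $\vartheta\circ s_U=\id_{\im\vartheta|_U}$.

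Next, given $X\in\Gamma(\im\vartheta)$, on each such neighbourhood $U$ the local $1$-form $\alpha_U:=s_U(X|_U)\in\Omega^1(U)$ satisfies $\vartheta(\alpha_U)=X|_U$. To globalize, pick a locally finite open cover $\{U_i\}$ of $M$ on which such local splittings $s_i$ exist, together with a subordinate partition of unity $\{\rho_i\}$. Define
\begin{equation*}
\alpha:=\sum_i \rho_i\,\alpha_{U_i}\in\Omega^1(M).
\end{equation*}
Then $\cCi(M)$-linearity of $\vartheta$ yields
\begin{equation*}
\vartheta(\alpha)=\sum_i\rho_i\,\vartheta(\alpha_{U_i})=\sum_i\rho_i\,X=X,
\end{equation*}
so $X\in\mathcal{F}_\vartheta$, which completes the proof.

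The only step requiring some care, and the closest thing to an obstacle here, is producing the local smooth splitting $s_U$, which in turn requires knowing that $\im\vartheta$ is a smooth subbundle (not just a set-theoretic distribution). This is precisely where the regularity hypothesis is used; Example \ref{ex: sing-d} shows that without it the conclusion can fail, since there $x\partial_x\in\Gamma(\im\vartheta)$ cannot be written as $\vartheta(\alpha)$ for any smooth $1$-form $\alpha$.
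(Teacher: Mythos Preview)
Your proof is correct and follows essentially the same strategy as the paper: establish the easy inclusion, produce a local $1$-form preimage of $X$ on each chart, and then globalize via a partition of unity. The only cosmetic difference is that the paper obtains the local preimage using coordinate gradients $X_{x^i}=\vartheta(\dif x^i)$ rather than an abstract bundle splitting, and it outsources the partition-of-unity step to Lemma~\ref{lem: loc} (locality of $\mathcal{F}_\vartheta$) instead of carrying it out in place.
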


\begin{proof}
 For a proof, see Appendix \ref{app: some-proofs}.
\end{proof}

Thus, when $\vartheta$ is regular, the characteristic distribution and module give exactly the same information.

\subsection{The characteristic distribution/module of a symmetric Poisson structure}
We show that the characteristic distribution and module of a symmetric Poisson structure satisfy a compatibility relation with the background connection.

\begin{proposition}\label{prop: sPs-char-d}
 Given a symmetric Poisson structure $(\vartheta,\nabla)$, the characteristic module is preserved by the symmetric bracket, that is,
 \begin{equation*}
 \pg{\mathcal{F}_\vartheta,\mathcal{F}_\vartheta}_s\subseteq\mathcal{F}_\vartheta.
 \end{equation*} 
\end{proposition}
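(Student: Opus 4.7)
The plan is to show that, for any $\alpha,\beta\in\Omega^1(M)$, the symmetric bracket $[\vartheta(\alpha),\vartheta(\beta)]_s$ lies in $\mathcal{F}_\vartheta$. This suffices for the proposition: a general element of $\mathcal{F}_\vartheta$ is of the form $f\vartheta(\alpha)$, and the formula $[fX,Y]_s=f[X,Y]_s+(Yf)X$ (immediate from \eqref{eq: sym-br-def}) guarantees that membership in $\mathcal{F}_\vartheta$ is preserved under $\cCi(M)$-multiplication in either slot.

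First I would expand, using that $\nabla$ is a derivation of the tensor algebra,
\begin{equation*}
[\vartheta(\alpha),\vartheta(\beta)]_s=\nabla_{\vartheta(\alpha)}(\vartheta(\beta))+\nabla_{\vartheta(\beta)}(\vartheta(\alpha))=(\nabla_{\vartheta(\alpha)}\vartheta)(\beta)+(\nabla_{\vartheta(\beta)}\vartheta)(\alpha)+\vartheta(\nabla_{\vartheta(\alpha)}\beta+\nabla_{\vartheta(\beta)}\alpha).
\end{equation*}
The last summand manifestly belongs to $\mathcal{F}_\vartheta$, so the task reduces to showing that the \emph{first} pair of terms belongs to $\mathcal{F}_\vartheta$ as well.

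To do this I would invoke the integrability condition via Proposition \ref{prop: sym-Poisson-bracket}: pairing $\frac{1}{2}[\vartheta,\vartheta]_s=0$ against an arbitrary $\eta\in\Omega^1(M)$ gives
\begin{equation*}
(\nabla_{\vartheta(\alpha)}\vartheta)(\beta,\eta)+(\nabla_{\vartheta(\beta)}\vartheta)(\alpha,\eta)=-(\nabla_{\vartheta(\eta)}\vartheta)(\alpha,\beta).
\end{equation*}
The left-hand side is exactly $\eta\bigl((\nabla_{\vartheta(\alpha)}\vartheta)(\beta)+(\nabla_{\vartheta(\beta)}\vartheta)(\alpha)\bigr)$. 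The key observation is that the map $X\mapsto(\nabla_X\vartheta)(\alpha,\beta)$ is $\cCi(M)$-linear in $X$, so it is given by contraction with a $1$-form $\omega_{\alpha,\beta}\in\Omega^1(M)$; using the symmetry of $\vartheta$ one has $(\nabla_{\vartheta(\eta)}\vartheta)(\alpha,\beta)=\omega_{\alpha,\beta}(\vartheta(\eta))=\eta(\vartheta(\omega_{\alpha,\beta}))$. Hence the identity above reads $\eta\bigl((\nabla_{\vartheta(\alpha)}\vartheta)(\beta)+(\nabla_{\vartheta(\beta)}\vartheta)(\alpha)+\vartheta(\omega_{\alpha,\beta})\bigr)=0$ for every $\eta$, and therefore
\begin{equation*}
(\nabla_{\vartheta(\alpha)}\vartheta)(\beta)+(\nabla_{\vartheta(\beta)}\vartheta)(\alpha)=-\vartheta(\omega_{\alpha,\beta})\in\mathcal{F}_\vartheta.
\end{equation*}
Combining this with the Leibniz remainder finishes the proof.

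The step I expect to be the main conceptual obstacle is the one in the last paragraph: realizing that the cyclic third term of the integrability condition, despite containing $\vartheta(\eta)$ inside the covariant derivative rather than outside, can be re-expressed as $\eta$ paired with an element of $\mathcal{F}_\vartheta$. This requires the tensoriality of $\nabla\vartheta$ in the differentiation direction together with the symmetry of $\vartheta$; once these two ingredients are identified, the rest is bookkeeping. An alternative presentation would phrase the same manipulation directly in terms of the symmetric Schouten formula \eqref{eq: s-Schouten-formula} applied to $\iota_\alpha\iota_\beta[\vartheta,\vartheta]_s=0$, but the argument above seems the most transparent.
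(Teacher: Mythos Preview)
Your proof is correct and follows essentially the same route as the paper's: expand $[\vartheta(\alpha),\vartheta(\beta)]_s$ via the Leibniz rule, use the cyclic identity from Proposition~\ref{prop: sym-Poisson-bracket}, and recognize the remaining term $(\nabla_{\vartheta(\,)}\vartheta)(\alpha,\beta)$ as $\vartheta$ applied to the $1$-form $(\nabla\vartheta)(\alpha,\beta)$ (your $\omega_{\alpha,\beta}$). The paper writes this last step in one line as $(\nabla_{\vartheta(\,)}\vartheta)(\alpha,\beta)=\vartheta((\nabla\vartheta)(\alpha,\beta))$, whereas you spell out the tensoriality and symmetry explicitly; also note your opening reduction is unnecessary since $f\vartheta(\alpha)=\vartheta(f\alpha)$ already, so every element of $\mathcal{F}_\vartheta$ is of the form $\vartheta(\alpha)$.
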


\begin{proof}
For $\alpha,\beta\in\Omega^1(M)$, we find
 \begin{align*}
\pg{\vartheta(\alpha),\vartheta(\beta)}_s&=\nabla_{\vartheta(\alpha)}\vartheta (\beta)+\nabla_{\vartheta(\beta)}\vartheta (\alpha)\\
&=(\nabla_{\vartheta(\alpha)}\vartheta)(\beta)+\vartheta(\nabla_{\vartheta(\alpha)}\beta)+(\nabla_{\vartheta(\beta)}\vartheta)(\alpha)+\vartheta(\nabla_{\vartheta(\beta)}\alpha)
 \end{align*}
 By Proposition \ref{prop: sym-Poisson-bracket}, we get
\begin{align*}
 \pg{\vartheta(\alpha),\vartheta(\beta)}_s&=\vartheta(\nabla_{\vartheta(\alpha)}\beta+\nabla_{\vartheta(\beta)}\alpha)-(\nabla_{\vartheta(\,)}\vartheta)(\alpha,\beta)\\
 &=\vartheta(\nabla_{\vartheta(\alpha)}\beta+\nabla_{\vartheta(\beta)}\alpha-(\nabla\vartheta)(\alpha,\beta)),
\end{align*}
hence the characteristic module is preserved by the symmetric bracket.
\end{proof}

This fact will have a clear geometric interpretation (Corollary \ref{cor: char-reg-sPs}) when $\vartheta$ is regular. First recall that, given a connection $\nabla$ on $M$, a smooth distribution $\Delta\subseteq TM$ is \textbf{geodesically invariant} (\cite{LewACD}) if each geodesic $\gamma :I\rightarrow M$ (for $I$ an open interval) such that $\dot{\gamma}(t_0)\in \Delta_{\gamma(t_0)}$ for some $t_0\in I$ satisfies $\dot{\gamma}(t)\in \Delta_{\gamma(t)}$ for all $t\in I$. There is a relation between the symmetric bracket and geodesically invariant distributions similar to \textit{Frobenius' theorem} for integrable distributions.

\begin{theorem}[\cite{LewACD}]\label{thm: LewisT}
For a connection $\nabla$ on $M$, a regular distribution $\Delta\subseteq TM$ is geodesically invariant if and only if
\begin{equation*}
\pg{\Gamma(\Delta),\Gamma(\Delta)}_{s}\subseteq \Gamma(\Delta).
\end{equation*}
\end{theorem}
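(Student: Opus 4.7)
The plan is to work with the geodesic spray $Z$ on $TM$, the second-order vector field whose integral curves are the velocities $\dot\gamma$ of $\nabla$-geodesics. The theorem is equivalent to showing that $Z$ is tangent to $\Delta$, regarded as a submanifold of $TM$. For the forward implication, I would use the connection-induced splitting $T_v(TM)\cong T_mM\oplus T_mM$ into horizontal and vertical parts; for the converse I would resort to local coordinates and exploit the hypothesis that $\Delta$ has constant rank, so that it admits a smooth local frame.

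For the forward direction, fix $X\in\Gamma(\Delta)$ and $m\in M$, and let $\gamma$ be the geodesic with $\dot\gamma(0)=X_m$. Geodesic invariance forces the curve $c(t):=\dot\gamma(t)$ to lie in $\Delta$, so $\dot c(0)\in T_{X_m}\Delta$; in the horizontal/vertical splitting its components are $X_m$ and $\nabla_{\dot\gamma}\dot\gamma|_0=0$. Since the section $X\colon M\to TM$ also has image in $\Delta$, the vector $dX_m(X_m)\in T_{X_m}\Delta$ decomposes as $X_m$ horizontally and $\nabla_XX|_m$ vertically. Their difference is vertical, tangent to $\Delta$, and, because $\Delta$ is a regular subbundle whose vertical tangent space at $X_m$ is naturally identified with $\Delta_m$, yields $\nabla_XX|_m\in\Delta_m$. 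Polarization through $\nabla_{X+Y}(X+Y)=\nabla_XX+\nabla_YY+[X,Y]_s$ then gives $[X,Y]_s\in\Gamma(\Delta)$ for all $X,Y\in\Gamma(\Delta)$.

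For the converse, regularity lets me choose a local frame $X_1,\dots,X_k$ of $\Delta$ and parametrize $\Delta$ locally as the submanifold $\{(x,u^aX_a(x))\}\subseteq TM$. In local coordinates $(x^i,v^i)$ on $TM$ the spray reads $Z=v^i\partial_{x^i}-\Gamma^k_{ij}v^iv^j\partial_{v^k}$, while $T_{(x,u^aX_a(x))}\Delta$ is spanned by $X_a^i\partial_{v^i}$ and $\partial_{x^j}+u^a\partial_jX_a^i\partial_{v^i}$. Writing $Z|_{v=u^aX_a(x)}$ as a linear combination of these, matching coefficients of $\partial_{x^i}$ fixes the horizontal piece, and matching coefficients of $\partial_{v^k}$ reduces, via the coordinate formula for $\nabla_{X_a}X_b$, to the requirement that $u^au^b\nabla_{X_a}X_b\in\Delta_x$ for all $u$; by symmetrization in $a,b$ this equals $\tfrac{1}{2}u^au^b[X_a,X_b]_s$, which lies in $\Delta_x$ by hypothesis. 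Hence $Z$ is tangent to $\Delta$, so its integral curves starting in $\Delta$ stay in $\Delta$. The main obstacle is this last local computation: it must be organized so that the symmetric combination $\nabla_{X_a}X_b+\nabla_{X_b}X_a$ emerges naturally from the symmetry of $u^au^b$, rather than the antisymmetric Lie bracket that one would encounter in the Frobenius-style argument, and one must check that the extra vertical contribution from pushing forward $\partial_{x^j}$ through the parametrization precisely cancels the partial derivatives inside $\nabla_{X_a}X_b$.
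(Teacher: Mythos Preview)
The paper does not supply its own proof of this statement: Theorem~\ref{thm: LewisT} is quoted from \cite{LewACD} and used as a black box, so there is no in-paper argument to compare against. Your proposal is therefore being assessed on its own merits.

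Your argument is sound and is essentially the standard route to this result. The key identification you use in the forward direction---that for a regular subbundle $\Delta\subseteq TM$ the vertical part of $T_v\Delta$ at any $v\in\Delta_m$ is precisely $\Delta_m$---is correct and is where regularity is genuinely consumed. The polarization step is clean. In the converse, the local computation you sketch does go through: after matching the $\partial_{x^j}$-coefficients, the residual vertical term is $-u^a u^b\bigl(X_b^j\partial_jX_a^i+\Gamma^i_{jk}X_a^jX_b^k\bigr)\partial_{v^i}$, and the symmetry of $u^au^b$ forces only the symmetric part of $\Gamma$ to contribute, so the expression collapses to $-\tfrac12 u^au^b[X_a,X_b]_s$ as you anticipate. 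One small point worth making explicit in a write-up: once $Z$ is tangent to the embedded submanifold $\Delta$, the conclusion that integral curves remain in $\Delta$ follows from uniqueness of solutions to ODEs, since the restriction of $Z$ to $\Delta$ is a well-defined vector field on $\Delta$ whose integral curves are also integral curves of $Z$ in $TM$.
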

Note that the assumption of regularity in Theorem \ref{thm: LewisT} cannot be removed.
\begin{example}\label{ex:delta-not-geod-inv}
 On $(\R,\nabla^\emph{Euc})$, consider the singular smooth distribution $\Delta$ given~by
 \begin{align*}
 \Delta_0&=\{ 0\}, & \Delta_x&=T_x\R
 \end{align*}
 for $x\neq 0$. Each vector field $X\in\Gamma(\Delta)$ is of the form $X=f\partial_x$ for some $f\in\cCi(\R)$ such that $f(0)=0$. As, for every $f,h\in\cCi(\R)$, we have that 
 \begin{equation*}
\pg{f\partial_x,h\partial_x}_s=((\partial_xf)h+f(\partial_x h))\partial_x,
 \end{equation*} 
the space $\Gamma(\Delta)$ is preserved by the symmetric bracket. However, the geodesic $\gamma(t):=t$ is tangent to $\Delta$ if and only if $t\neq 0$. So, $\Delta$ is not geodesically invariant.
\end{example}

We have a direct consequence of Lemma \ref{lem: reg-d}, Proposition \ref{prop: sPs-char-d} and Theorem \ref{thm: LewisT}.

\begin{corollary}\label{cor: char-reg-sPs}
 The characteristic distribution of a regular symmetric Poisson structure is geodesically invariant.
\end{corollary}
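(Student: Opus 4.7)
The plan is to combine the three ingredients that the excerpt has already put in place, so the corollary will fall out as a direct composition. Let $(\vartheta,\nabla)$ be a regular symmetric Poisson structure, and set $\Delta:=\im\vartheta$, which is a regular distribution by the regularity assumption.

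First, I would use Proposition \ref{prop: sPs-char-d} to note that the characteristic module $\mathcal{F}_\vartheta$ is closed under the symmetric bracket: $[\mathcal{F}_\vartheta,\mathcal{F}_\vartheta]_s\subseteq \mathcal{F}_\vartheta$. This is exactly the input needed to invoke the Lewis-type theorem, but that theorem is stated in terms of $\Gamma(\Delta)$, not $\mathcal{F}_\vartheta$. The bridge between the two is Lemma \ref{lem: reg-d}: under regularity, $\Gamma(\Delta)=\Gamma(\im\vartheta)=\mathcal{F}_\vartheta$. Substituting this equality into the containment gives $[\Gamma(\Delta),\Gamma(\Delta)]_s\subseteq \Gamma(\Delta)$.

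Finally, I would apply Theorem \ref{thm: LewisT} to the regular distribution $\Delta$: the containment just obtained is precisely the equivalent condition there for geodesic invariance, so $\Delta=\im\vartheta$ is geodesically invariant with respect to $\nabla$, which is the statement of the corollary.

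There is essentially no obstacle here; the work has been done in Lemma \ref{lem: reg-d}, Proposition \ref{prop: sPs-char-d} and Theorem \ref{thm: LewisT}, and the corollary is a one-line composition of them. The only subtlety worth flagging in the write-up is why regularity is genuinely used: it enters twice, once to guarantee $\Gamma(\im\vartheta)=\mathcal{F}_\vartheta$ so that the bracket closure upgrades from the module to the sections of the distribution, and once as the hypothesis of Theorem \ref{thm: LewisT} itself. Example \ref{ex:delta-not-geod-inv} already shows that this hypothesis cannot be dropped, which is why the statement is confined to the regular case and motivates the later, more delicate analysis of the singular setting via locally geodesically invariant distributions.
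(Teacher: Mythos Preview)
Your proposal is correct and matches the paper's own approach exactly: the paper states the corollary as a direct consequence of Lemma \ref{lem: reg-d}, Proposition \ref{prop: sPs-char-d} and Theorem \ref{thm: LewisT}, which is precisely the composition you spell out. Your additional remark on where regularity is genuinely used is accurate and a nice clarification beyond what the paper makes explicit.
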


As we will see later (Theorem \ref{thm: gen-Lewis}), the statement of Corollary \ref{cor: char-reg-sPs} is locally true even for singular symmetric Poisson structures.

The following example illustrates the notion of characteristic distribution and shows that a geodesically invariant distribution may not be involutive. 

\begin{example}\label{ex: SO(3)2}
 Let us return to Example \ref{ex: SO(3)}, the symmetric Poisson structure $(\vartheta:=X_1\otimes X_1+X_2\otimes X_2,\nabla^0)$ on $\SO(3)$, with $\nabla$ is the natural connection (with torsion) on $\SO(3)$. The characteristic distribution at $m\in\SO(3)$ is given by
 \begin{equation*}
 (\im\vartheta)_m=\spann\{\rest{X_1}{m}, \rest{X_2}{m}\}.
 \end{equation*}
 Is is regular, hence, by Corollary \ref{cor: char-reg-sPs}, it is geodesically invariant. However, since
 \begin{equation*}
 [X_1,X_2]=X_3,
 \end{equation*}
 it is not integrable by Frobenius' theorem.
\end{example}

\subsection{The characteristic metric of a symmetric bivector field}\label{sec:char-metric-bivector}
Given a symmetric bivector field $\vartheta\in\symbi$, each vector space $(\im\vartheta)_m\leq T_mM$, for $m\in M$, acquires the canonical linear \mbox{(pseudo-)Rie}mannian metric $g_{\vartheta_m}$ given by
\begin{equation}\label{eq: vartheta-char-metric}
 g_{\vartheta_m}(\vartheta(\alpha),\vartheta(\beta)):=\vartheta(\alpha,\beta), 
\end{equation}
\sloppy for $\alpha,\beta\in T^*_mM$. Clearly, if $(p,q,n-(p+q))$ is the signature of $\vartheta_m$, the metric $g_{\vartheta_m}$ is of signature $(p,q)$. Pointwise also the converse is true.

\begin{proposition}\label{prop: family-metrics}
 Let $(\Delta,g)$ be a \mbox{(pseudo-)Rie}mannian vector subspace of a real $n$-dimensional vector space $V$. There is a unique $\vartheta\in\Sym^2V$ such that $\im\vartheta=\Delta$ and, for $\alpha,\beta\in V^*$,
 \begin{equation*}
 \vartheta(\alpha,\beta)=g(\vartheta(\alpha),\vartheta(\beta)).
 \end{equation*}
 Moreover, if $g$ is of signature $(p,q)$, the bivector $\vartheta$ is of signature $(p,q,n-(p+q))$. 
 \end{proposition}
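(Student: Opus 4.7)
The plan is to construct $\vartheta$ from $(\Delta,g)$ via the musical isomorphism on $\Delta$, and then show that any candidate $\vartheta$ satisfying the stated properties must coincide with this construction.

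First, since $g$ is non-degenerate on $\Delta$, the musical map $g^\flat\colon \Delta\to\Delta^*$, $v\mapsto g(v,\,\cdot\,)$, is an isomorphism, and I denote its inverse by $g^\sharp\colon\Delta^*\to\Delta$. Let $r\colon V^*\to\Delta^*$ be the restriction $\alpha\mapsto\alpha|_\Delta$, which is surjective since every linear functional on $\Delta$ extends to $V$. I then define
\begin{equation*}
\vartheta\colon V^*\to V,\qquad \vartheta(\alpha):=g^\sharp(\alpha|_\Delta).
\end{equation*}
Symmetry of $g^\sharp$ (inherited from that of $g$) gives $\beta(\vartheta(\alpha))=(\beta|_\Delta)(g^\sharp(\alpha|_\Delta))=(\alpha|_\Delta)(g^\sharp(\beta|_\Delta))=\alpha(\vartheta(\beta))$, so $\vartheta\in\Sym^2V$. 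Surjectivity of $r$ together with the fact that $g^\sharp$ is onto $\Delta$ yields $\im\vartheta=\Delta$. The defining identity is a one-line check:
\begin{equation*}
g(\vartheta(\alpha),\vartheta(\beta))=g^\flat(\vartheta(\alpha))(\vartheta(\beta))=(\alpha|_\Delta)(\vartheta(\beta))=\beta(\vartheta(\alpha))=\vartheta(\alpha,\beta),
\end{equation*}
where in the third equality I used symmetry of $\vartheta$.

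For uniqueness, suppose $\vartheta'\in\Sym^2V$ satisfies both $\im\vartheta'\subseteq\Delta$ and $\vartheta'(\alpha,\beta)=g(\vartheta'(\alpha),\vartheta'(\beta))$. Reading the identity as $\beta(\vartheta'(\alpha))=g(\vartheta'(\alpha),\vartheta'(\beta))$ and letting $\beta$ vary over $V^*$ shows that, for every fixed $\alpha$, $(\beta\mapsto g(\vartheta'(\alpha),\vartheta'(\beta)))$ agrees with $\beta\mapsto\beta(\vartheta'(\alpha))$. Equivalently, $\beta|_\Delta=g^\flat(\vartheta'(\beta))$ for all $\beta$, so $\vartheta'(\beta)=g^\sharp(\beta|_\Delta)=\vartheta(\beta)$.

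For the signature, view $\vartheta$ as a symmetric bilinear form on $V^*$. Its radical is $\{\alpha\in V^*\mid\vartheta(\alpha)=0\}=\ker r=\Delta^\circ$, which has dimension $n-(p+q)$, providing the nullity. On the quotient $V^*/\Delta^\circ\cong\Delta^*$, the induced non-degenerate form is $(\bar\alpha,\bar\beta)\mapsto\bar\beta(g^\sharp(\bar\alpha))$, which is precisely the form determined by $g^\sharp\in\Sym^2\Delta$. Since $g^\sharp$ is the inverse of $g^\flat$, it has the same signature $(p,q)$ as $g$, giving the claimed $(p,q,n-(p+q))$.

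No step looks conceptually hard; the mild subtlety is keeping track of the two hats (symmetry of $\vartheta$ as an element of $\Sym^2V$ versus symmetry of the form $g^\sharp$ on $\Delta^*$) so that the defining identity is verified cleanly, and identifying the quotient form on $V^*/\Delta^\circ$ with $g^\sharp$ to read off the signature.
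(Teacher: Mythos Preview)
Your construction is essentially the paper's: both define $\vartheta$ as the composite of the restriction $V^*\to\Delta^*$ with the inverse metric $g^{-1}:\Delta^*\to\Delta$ followed by the inclusion $\Delta\hookrightarrow V$, and verify the two properties directly.

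There is a slip in your uniqueness argument. You assume only $\im\vartheta'\subseteq\Delta$, but under that weakened hypothesis the step ``Equivalently, $\beta|_\Delta=g^\flat(\vartheta'(\beta))$'' is not justified: from $g^\flat(\vartheta'(\beta))(\vartheta'(\alpha))=\beta|_\Delta(\vartheta'(\alpha))$ for all $\alpha$ you only conclude that $g^\flat(\vartheta'(\beta))$ and $\beta|_\Delta$ agree on $\im\vartheta'$, not on all of $\Delta$. Indeed the weakened statement is false, since $\vartheta'=0$ satisfies both $\im\vartheta'\subseteq\Delta$ and the identity $\vartheta'(\alpha,\beta)=g(\vartheta'(\alpha),\vartheta'(\beta))$. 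The fix is simply to use the actual hypothesis $\im\vartheta'=\Delta$, so that $\vartheta'(\alpha)$ sweeps out all of $\Delta$; then your direct deduction $\vartheta'(\beta)=g^\sharp(\beta|_\Delta)$ goes through.

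With that correction, your uniqueness argument is in fact more direct than the paper's, which instead writes a second candidate as $\vartheta+\varkappa$, uses $\im(\vartheta+\varkappa)=\Delta$ to get $\im\varkappa\leq\Delta$, expands the identity to show $\vartheta(\alpha)+\varkappa(\alpha)\in(\im\varkappa)^\perp$ inside $(\Delta,g)$, and concludes $\im\varkappa=0$. Your approach avoids the orthogonal-complement step entirely. You also supply a signature argument (radical $=\Delta^\circ$, induced form on $V^*/\Delta^\circ\cong\Delta^*$ equals $g^{-1}$), which the paper's proof leaves implicit.
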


 \begin{proof}
 For a proof, see Appendix \ref{app: some-proofs}.
\end{proof}

Every symmetric bivector field is thus fully codified by the distribution and the pointwise linear \mbox{(pseudo-)Rie}mannian metric on the subspace given by this distribution. However, not every choice of distribution and pointwise metric, even if it seems to vary smoothly, determines a symmetric bivector field that is smooth, as we show next. 

\begin{example}
 Consider the smooth singular distribution $\Delta$ on $\R$ given by
 \begin{align*}
 \Delta_{0}&=\{ 0\}, & \Delta_{x}=T_x\R
\end{align*}
for $x\neq0$. For every $x\in\R$, consider the linear Riemannian metric on $\Delta_x$ 
\begin{equation*}
 g_x:=x^2\,\rest{\dif x}{x}\otimes \rest{\dif x}{x}.
\end{equation*}
By Proposition \ref{prop: family-metrics}, we get $\vartheta_x\in\Sym^2 T_x\R$ for each $x\in\R$. Explicitly, 
\begin{align*}
 \vartheta_0&=0, & \vartheta_x&=\frac{1}{x^2}\,\rest{\partial_x}{x}\otimes\rest{\partial_x}{x}
\end{align*}
for $x\neq 0$, which does not define a global smooth symmetric bivector field on $\R$.
\end{example}

\begin{definition}\label{def: char-metric}
 For $\vartheta\in\mathfrak{X}^2_\text{sym}(M)$, we call the collection of linear \mbox{(pseudo-)Rie}mannian metrics $\{ g_{\vartheta_m}\}_{m\in M}$, in \eqref{eq: vartheta-char-metric}, the \textbf{characteristic metric} of~$\vartheta$.
\end{definition}

The smoothness of $\vartheta\in\symbi$ can be taken as the definition of smoothness for the characteristic metric. We see what it implies in terms of the map
\begin{equation}\label{eq: char-met-sing}
\begin{split}
 g_\vartheta:&\,\mathcal{F}_\vartheta\times\mathcal{F}_\vartheta\rightarrow\{ \text{maps }M\rightarrow \R\}\\
 &(X,Y)\mapsto ( m\mapsto g_{\vartheta_m}(X_m,Y_m)).
\end{split}
\end{equation}

\begin{lemma}\label{lem: gen-char-met}
 Let $\vartheta\in\mathfrak{X}^2_\emph{sym}(M)$. The map $g_\vartheta$, given by \eqref{eq: char-met-sing}, is symmetric, $\cCi(M)$-bilinear and its codomain is the space $\cCi(M)$.
\end{lemma}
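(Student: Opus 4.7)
The plan is to exhibit the pointwise value $g_\vartheta(X,Y)(m)$ as the evaluation of $\vartheta$ on two $1$-forms, which will immediately give smoothness, symmetry, and $\cCi(M)$-bilinearity. The main conceptual point to take care of is well-definedness.

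First I would unpack the definition. Since $X,Y\in\mathcal{F}_\vartheta$, by definition there exist $\alpha,\beta\in\Omega^1(M)$ with $X=\vartheta(\alpha)$ and $Y=\vartheta(\beta)$. Then, directly from \eqref{eq: vartheta-char-metric}, for every $m\in M$,
\begin{equation*}
 g_\vartheta(X,Y)(m)=g_{\vartheta_m}(\vartheta(\alpha)_m,\vartheta(\beta)_m)=\vartheta(\alpha,\beta)(m),
\end{equation*}
so $g_\vartheta(X,Y)=\vartheta(\alpha,\beta)\in\cCi(M)$, which settles that the codomain is $\cCi(M)$.

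Next I would address the well-definedness of this identification, which is the key subtlety and the one step worth a small argument. The $1$-forms $\alpha,\beta$ realizing $X$ and $Y$ as $\vartheta(\alpha)$, $\vartheta(\beta)$ are not unique; however, by Proposition \ref{prop: family-metrics} applied pointwise, the right-hand side of \eqref{eq: vartheta-char-metric} depends only on $\vartheta(\alpha)_m$ and $\vartheta(\beta)_m$, so $\vartheta(\alpha,\beta)(m)$ is independent of the chosen $\alpha,\beta$ at every $m$, hence as a global function. This is the only spot where something non-trivial is used.

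Finally, symmetry and $\cCi(M)$-bilinearity follow formally. For symmetry, $g_\vartheta(X,Y)=\vartheta(\alpha,\beta)=\vartheta(\beta,\alpha)=g_\vartheta(Y,X)$ by the symmetry of $\vartheta$. For $\cCi(M)$-bilinearity, if $X'=\vartheta(\alpha')$ and $f\in\cCi(M)$, then $fX+X'=\vartheta(f\alpha+\alpha')\in\mathcal{F}_\vartheta$, so
\begin{equation*}
 g_\vartheta(fX+X',Y)=\vartheta(f\alpha+\alpha',\beta)=f\vartheta(\alpha,\beta)+\vartheta(\alpha',\beta)=fg_\vartheta(X,Y)+g_\vartheta(X',Y),
\end{equation*}
using the $\cCi(M)$-linearity of $\vartheta$ in each slot; linearity in the second entry follows by symmetry. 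I do not expect any real obstacle: everything reduces to the tensorial nature of $\vartheta$ once the well-definedness point is noted.
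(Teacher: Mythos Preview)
Your proof is correct and follows essentially the same route as the paper: both reduce to the identity $g_\vartheta(X,Y)=\vartheta(\alpha,\beta)$ for any choice of $\alpha,\beta$ with $X=\vartheta(\alpha)$, $Y=\vartheta(\beta)$, with the paper obtaining symmetry and $\cCi(M)$-bilinearity directly from the pointwise definition (each $g_{\vartheta_m}$ is already a symmetric bilinear form on $(\im\vartheta)_m$) rather than through this identification. One minor quibble: the well-definedness of $g_{\vartheta_m}$ is already built into its introduction at \eqref{eq: vartheta-char-metric}, and Proposition \ref{prop: family-metrics} is the converse direction (from $(\Delta,g)$ to $\vartheta$), so it is not the right citation here; the relevant fact is simply that $\ker(\vartheta:T^*_mM\to T_mM)$ coincides with the radical of the bilinear form $\vartheta_m$.
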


\begin{proof}
 Symmetry and $\cCi(M)$-bilinearity follow from the pointwise definition of $g_\vartheta$. For $X=\vartheta(\alpha), Y=\vartheta(\beta)\in\mathcal{F}_\vartheta$ (for some $\alpha,\beta\in\Omega^1(M)$), we have
 \begin{equation*}
 (g_\vartheta(X,Y))(m)=g_{\vartheta_m}(X_m,Y_m)=\vartheta(\alpha_m,\beta_m)
 \end{equation*}
 for every $m\in M$, that is, $g_\vartheta(X,Y)=\vartheta(\alpha,\beta)$, hence $g_\vartheta(X,Y)\in\cCi(M)$.
\end{proof}

If $\vartheta\in\mathfrak{X}^2_\text{sym}(M)$ is regular, the characteristic distribution $\im\vartheta$ is a subbundle and the characteristic metric corresponds, by Lemmas \ref{lem: reg-d} and \ref{lem: gen-char-met}, to a smooth~section
 \begin{equation*}
M\rightarrow \Sym^2(\im\vartheta)^*.
 \end{equation*} 
In particular, if $\vartheta$ is non-degenerate, it is an element of $\Upsilon^2(M)$: the inverse of $\vartheta$.

\begin{remark}
 We will also use the name `characteristic metric' for the \mbox{$\cCi(M)$-bi}linear map $g_\vartheta:\mathcal{F}_\vartheta\times\mathcal{F}_\vartheta\rightarrow\cCi(M)$.
\end{remark}

We illustrate the notion of characteristic metric with an example, which indicates that this framework might be useful for studying metrics with singularities.

\begin{example}\label{ex: char-metric}
 Consider $\vartheta:=x^3\,\partial_x\otimes\partial_x\in \mathfrak{X}^2_\emph{sym}(\R)$, whose characteristic distribution~is $(\im\vartheta)_{0}=\{ 0\}$, $(\im\vartheta)_{x}=T_x\R$ for $x\neq 0$. Its characteristic metric~is
\begin{equation*}
 g_{\vartheta_{x}}=\frac{1}{x^3}\rest{\dif x}{x}\otimes\rest{\dif x}{x}
\end{equation*}
for $x\neq 0$, and $0$ at the origin. The metric $g_\vartheta$ is thus negative-definite on the half-line $\R^-$ and positive-definite on $\R^+$. Note that, for $Y:=x\partial_x\in\Gamma(\im\vartheta)$, 
\begin{align*}
 (g_\vartheta(Y,Y))(x)=\begin{cases}
 0 & x=0,\\
 \frac{1}x &x\neq 0,
 \end{cases}
\end{align*}
which is clearly not a smooth function. As we proved in Lemma \ref{lem: gen-char-met}, the function above is smooth when restricted to the characteristic module, however, $Y\notin\mathcal{F}_\vartheta$.
\end{example}

Example \ref{ex: char-metric} shows that, despite the fact that the domain of the characteristic metric of $\vartheta\in\symbi$ can be extended to $\Gamma(\im\vartheta)$, the resulting function on $M$ may not be necessarily smooth. It also shows that, as the rank of $\im\vartheta$ jumps, the signature of $g_\vartheta$ may also jump. 

\subsection{The characteristic metric of a symmetric Poisson structure}\label{sec:char-metric} In this section we focus on the case when the symmetric bivector $\vartheta$ is a symmetric Poisson structure for some torsion-free connection.

It is well-known that the square of the speed of a geodesic on a \mbox{(pseudo-)Rie}mannian manifold is constant. We show a generalization of this phenomenon in symmetric Poisson geometry.

\begin{definition}\label{def:theta-admissible}
 Let $\vartheta\in\symbi$. A curve $\gamma:I\rightarrow M$, with $I$ an open interval, is called \textbf{$\vartheta$-admissible} if there is a curve $a:I\rightarrow T^*M$ such that $\vartheta(a(t))=\dot{\gamma}(t)$.
 \end{definition}

Clearly, if $\vartheta\in\symbi$ is non-degenerate, every curve on $M$ is $\vartheta$-admissible. However, it is not generally the case. 

\begin{proposition}\label{prop: const-speed}
 Let $\vartheta\in\mathfrak{X}^2_\emph{sym}(M)$ and $\gamma:I\rightarrow M$ be a $\vartheta$-admissible curve defined on some open interval $I\subseteq \R$. The function $\mathrm{Sq}_\vartheta(\dot{\gamma}):I\rightarrow \R$, given by $\mathrm{Sq}_\vartheta(\dot{\gamma})(t):= g_\vartheta(\dot{\gamma}(t),\dot{\gamma}(t))$ for $t\in I$, is smooth. If, in addition, $(\vartheta,\nabla)$ is a symmetric Poisson structure and $\gamma$ is a geodesic, the function $\mathrm{Sq}_\vartheta(\dot{\gamma})$ is constant.
\end{proposition}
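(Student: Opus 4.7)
The strategy is to differentiate $\mathrm{Sq}_\vartheta(\dot\gamma)$ explicitly along an admissible lift of $\gamma$ to $T^{*}M$ and then apply the integrability identity of Proposition \ref{prop: sym-Poisson-bracket}. Pick a $\vartheta$-admissible lift $a : I \to T^{*}M$ with $\vartheta(a(t)) = \dot\gamma(t)$. The defining relation \eqref{eq: vartheta-char-metric} of the characteristic metric gives
\[
\mathrm{Sq}_\vartheta(\dot\gamma)(t) \;=\; g_{\vartheta_{\gamma(t)}}\!\bigl(\vartheta(a(t)),\,\vartheta(a(t))\bigr) \;=\; \vartheta_{\gamma(t)}(a(t),\,a(t)),
\]
which is smooth in $t$ as a composition of smooth maps; this takes care of the first claim and uses neither the geodesic condition nor the integrability of $(\vartheta,\nabla)$. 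The expression is also independent of the chosen lift, since the left-hand side is.

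For the second claim, I would apply the Leibniz rule for the covariant derivative along $\gamma$:
\[
\tfrac{d}{dt}\mathrm{Sq}_\vartheta(\dot\gamma)(t) \;=\; (\nabla_{\dot\gamma}\vartheta)(a,a) \;+\; 2\,\vartheta(\nabla_{\dot\gamma} a,\, a).
\]
Differentiating $\dot\gamma = \vartheta(a)$ similarly and invoking the geodesic equation $\nabla_{\dot\gamma}\dot\gamma = 0$ yields
\[
0 \;=\; \nabla_{\dot\gamma}\bigl(\vartheta(a)\bigr) \;=\; (\nabla_{\dot\gamma}\vartheta)(a) + \vartheta(\nabla_{\dot\gamma} a),
\]
so $\vartheta(\nabla_{\dot\gamma} a) = -(\nabla_{\dot\gamma}\vartheta)(a)$; contracting with $a$ gives $\vartheta(\nabla_{\dot\gamma} a, a) = -(\nabla_{\dot\gamma}\vartheta)(a, a)$. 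Substituting back and replacing $\dot\gamma$ by $\vartheta(a)$,
\[
\tfrac{d}{dt}\mathrm{Sq}_\vartheta(\dot\gamma)(t) \;=\; -(\nabla_{\vartheta(a)}\vartheta)(a, a).
\]

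To finish, Proposition \ref{prop: sym-Poisson-bracket} evaluated at the triple $(a,a,a)$ reads $\tfrac12[\vartheta,\vartheta]_s(a,a,a) = 3\,(\nabla_{\vartheta(a)}\vartheta)(a,a)$ by cyclic symmetry. Since $(\vartheta,\nabla)$ is symmetric Poisson, $[\vartheta,\vartheta]_s = 0$, so the derivative above vanishes identically on $I$ and $\mathrm{Sq}_\vartheta(\dot\gamma)$ is constant. The only subtlety I anticipate is the sign bookkeeping that converts $\vartheta(\nabla_{\dot\gamma} a, a)$ into $-(\nabla_{\dot\gamma}\vartheta)(a, a)$ via the geodesic equation; once this is in place, the result follows from the Leibniz rule and the already-established Proposition \ref{prop: sym-Poisson-bracket}.
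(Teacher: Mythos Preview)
Your proof is correct and follows essentially the same route as the paper: write $\mathrm{Sq}_\vartheta(\dot\gamma)=\vartheta(a,a)$, differentiate via Leibniz, and then use both the geodesic equation and the identity $(\nabla_{\vartheta(a)}\vartheta)(a,a)=0$ from Proposition~\ref{prop: sym-Poisson-bracket}. The only cosmetic difference is the order in which the two hypotheses are invoked: you apply the geodesic equation first to reduce the derivative to $-(\nabla_{\vartheta(a)}\vartheta)(a,a)$ and then kill it with integrability, whereas the paper first uses integrability to cancel the $(\nabla_{\vartheta(a)}\vartheta)(a,a)$ term and rewrite the remaining term as $2a(\nabla_{\dot\gamma}\dot\gamma)$, which then vanishes by the geodesic equation.
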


\begin{proof}
 For every $t\in I$, we easily find that $\mathrm{Sq}_\vartheta(\dot{\gamma})(t)=\vartheta(a(t),a(t))$ for some curve $a:I\rightarrow T^*M$, hence $\mathrm{Sq}_\vartheta(\dot{\gamma})$ is smooth. Moreover, for any connection $\nabla$ on $M$,
 \begin{equation*}
 \frac{\dif}{\dif t}\mathrm{Sq}_\vartheta(\dot{\gamma})=\frac{\dif }{\dif t}\vartheta(a,a)=(\nabla_{\vartheta(a)}\vartheta)(a,a)+2\vartheta(\nabla_{\vartheta(a)}a,a).
 \end{equation*}
 If $(\vartheta,\nabla)$ is a symmetric Poisson structure, Propositions \ref{prop: sym-Poisson-bracket} and \ref{prop: sPs-char-d} yield
 \begin{align*}
 (\nabla_{\vartheta(a)}\vartheta)(a,a)&=0, & \vartheta(\nabla_{\vartheta(a)}a,a)&=a(\nabla_{\vartheta(a)}\vartheta(a)),
 \end{align*}
 and hence the result follows from 
 \begin{equation*}
 \frac{\dif}{\dif t}\mathrm{Sq}_\vartheta(\dot{\gamma})=2a(\nabla_{\vartheta(a)}\vartheta(a))=2a(\nabla_{\dot{\gamma}}\dot{\gamma}).\qedhere
 \end{equation*}
\end{proof}

\section{\for{toc}{Geometric interpretation of involutive and strong symmetric Poisson}\except{toc}{Geometric interpretation of involutive\\ and strong symmetric Poisson structures}}\label{sec: geo-int-strong}
We know from its very definition (or by Proposition \ref{prop: sPs-char-d}) that the characteristic module of a strong symmetric Poisson structure is also preserved by the symmetric bracket. Surprisingly, we can prove that it is also involutive (for the Lie bracket on vector fields).

\begin{proposition}\label{prop: ssPs-involutivity}
 The characteristic module of a strong symmetric Poisson structure $(\vartheta,\nabla)$ is involutive.
\end{proposition}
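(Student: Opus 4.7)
My plan is to use the equivalent characterization of strong symmetric Poisson structures given by Corollary \ref{cor: ssPs-integrability}, namely that $\nabla_{\vartheta(\alpha)}\vartheta=0$ for every $\alpha\in\Omega^1(M)$, and combine it with the torsion-freeness of $\nabla$ to produce an explicit $1$-form $\eta\in\Omega^1(M)$ such that $[\vartheta(\alpha),\vartheta(\beta)]=\vartheta(\eta)$, for arbitrary $\alpha,\beta\in\Omega^1(M)$.

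First, I would take two arbitrary generators $X=\vartheta(\alpha)$ and $Y=\vartheta(\beta)$ of $\mathcal{F}_\vartheta$. Since $\nabla$ is torsion-free,
\begin{equation*}
[X,Y]=\nabla_X Y-\nabla_Y X=\nabla_{\vartheta(\alpha)}\vartheta(\beta)-\nabla_{\vartheta(\beta)}\vartheta(\alpha).
\end{equation*}
Next, I would expand each term using the Leibniz rule for the covariant derivative acting on the contraction $\vartheta(\beta)=\vartheta(\,\cdot\,,\beta)$, writing
\begin{equation*}
\nabla_{\vartheta(\alpha)}\vartheta(\beta)=(\nabla_{\vartheta(\alpha)}\vartheta)(\beta)+\vartheta(\nabla_{\vartheta(\alpha)}\beta),
\end{equation*}
and analogously for the other term.

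By Corollary \ref{cor: ssPs-integrability}, the strongness assumption kills both terms $(\nabla_{\vartheta(\alpha)}\vartheta)(\beta)$ and $(\nabla_{\vartheta(\beta)}\vartheta)(\alpha)$, leaving
\begin{equation*}
[\vartheta(\alpha),\vartheta(\beta)]=\vartheta\bigl(\nabla_{\vartheta(\alpha)}\beta-\nabla_{\vartheta(\beta)}\alpha\bigr)\in\mathcal{F}_\vartheta.
\end{equation*}
Since every pair of elements in $\mathcal{F}_\vartheta$ arises in this form, involutivity follows immediately.

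There is no real obstacle here: the whole argument rests on the observation that once $\nabla_{\vartheta(\alpha)}\vartheta=0$, the covariant derivative of a vector field in $\mathcal{F}_\vartheta$ along another element of $\mathcal{F}_\vartheta$ stays in $\mathcal{F}_\vartheta$, after which torsion-freeness converts this into a statement about the Lie bracket. The only subtle point worth mentioning is that this argument simultaneously shows the somewhat finer fact that $\mathcal{F}_\vartheta$ is even closed under $\nabla_X$ for any $X\in\mathcal{F}_\vartheta$, which makes involutivity (a skew-symmetric consequence) automatic, parallel to how Proposition \ref{prop: sPs-char-d} obtained closure under $[\,\,,\,]_s$ (the symmetric combination) in the weaker symmetric Poisson setting.
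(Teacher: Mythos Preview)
Your proof is correct and essentially identical to the paper's own argument: both use torsion-freeness to write the Lie bracket as a difference of covariant derivatives, expand via the Leibniz rule, and invoke Corollary \ref{cor: ssPs-integrability} to kill the $(\nabla_{\vartheta(\alpha)}\vartheta)$ terms, arriving at the same explicit formula $[\vartheta(\alpha),\vartheta(\beta)]=\vartheta(\nabla_{\vartheta(\alpha)}\beta-\nabla_{\vartheta(\beta)}\alpha)$. Your closing remark about closure under $\nabla_X$ is a nice addition that the paper makes only later, at the start of Section \ref{sec: leaf-con}.
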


\begin{proof}
 It follows from the torsion-freeness of $\nabla$ that, for $\alpha,\beta\in\Omega^1(M)$, we have
 \begin{align*}
 [\vartheta(\alpha),\vartheta(\beta)]&=\nabla_{\vartheta(\alpha)}\vartheta(\beta)-\nabla_{\vartheta(\beta)}\vartheta(\alpha)\\
 &=(\nabla_{\vartheta(\alpha)}\vartheta)(\beta)+\vartheta(\nabla_{\vartheta(\alpha)}\beta)-(\nabla_{\vartheta(\beta)}\vartheta)(\alpha)-\vartheta(\nabla_{\vartheta(\beta)}\alpha).
 \end{align*}
 By Corollary \ref{cor: ssPs-integrability}, this simplifies to
 \begin{equation*}
 [\vartheta(\alpha),\vartheta(\beta)]=\vartheta(\nabla_{\vartheta(\alpha)}\beta-\nabla_{\vartheta(\beta)}\alpha),
 \end{equation*}
 hence the characteristic module is involutive.
\end{proof}

\begin{remark}
The proof of Proposition \ref{prop: ssPs-involutivity} gives a hint on how to define an \mbox{anti-com}mutative bracket on $\Omega^1(M)$ when a symmetric bivector field and a connection on $M$ are given. It is natural to ask whether this bracket endows $T^*M$ with a Lie algebroid structure. We explore this direction in Appendix~\ref{app: Lie algebroids}. 
\end{remark}

\textit{Frobenius' theorem} establishes that a regular distribution integrates to a regular foliation if and only if its space of global sections is involutive. A generalization of the notion of a regular foliation of $M$ is that of a \textbf{partition} of $M$, that is, a disjoint decomposition of $M$ into immersed submanifolds. Given a smooth partition, every point $m\in M$ is thus contained in only one of these submanifolds, which we denote by $N_m$. A partition of $M$ is called \textbf{smooth} if, for every $u\in T_mN_m$, there is a vector field $X\in\mathfrak{X}(M)$ such that $X_m=u$ and $X_{m'}\in T_{m'}N_{m'}$ for all $m'\in M$. As we will recall next, the involutivity of a $\cCi(M)$-submodule of $\mathfrak{X}(M)$ is closely related to the existence of a partition of the manifold even in the singular case.

\begin{definition}
 We say that a symmetric Poisson structure is \textbf{involutive} if its characteristic module is involutive.
\end{definition}

By Proposition \ref{prop: ssPs-involutivity}, a strong symmetric Poisson structure is always involutive, so we have
\begin{equation*}
 \left\{\begin{array}{c}
\text{strong symmetric}\\
 \text{Poisson structures}
 \end{array}\right\}\subseteq\left\{\begin{array}{c}
\text{involutive symmetric}\\
 \text{Poisson structures}
 \end{array}\right\}\subseteq\left\{\begin{array}{c}
\text{symmetric}\\
 \text{Poisson structures}
 \end{array}\right\}.
\end{equation*}
Example \ref{ex: SO(3)2} shows that the second inclusion is strict. As we will show later that also the first inclusion is strict (Examples \ref{ex: involutive-(s)sPs} and \ref{ex: R5}), we shall thus work with this generality. 

\subsection{The characteristic partition} 
The origin of the theory of singular foliations goes back to the works of Štefan \cite{Stefan} and Sussmann \cite{Sussmann}. The modern approach to the theory is given in \cite{AndHGSF, LauWISF}. Our approach here primarily follows \cite[App. C]{CraLPG} and thus we understand singular foliations as \mbox{$\cCi(M)$-sub}modules of $\mathfrak{X}(M)$. After a brief review, we apply this theory to involutive symmetric Poisson structures.

Every $\cCi(M)$-submodule $\mathcal{F}$ of $\mathfrak{X}(M)$ determines a smooth distribution
\begin{equation*}
\Delta_{\mathcal{F}}:=\bigcup_{m\in M}\{ X_m\in T_mM\,|\,X\in\mathcal{F}\}. 
\end{equation*}

\begin{example}
 For every $\vartheta\in\mathfrak{X}^2_\emph{sym}(M)$, we have that $\im\vartheta=\Delta_{\mathcal{F}_\vartheta}$, see \eqref{eq: char-d-m}.
\end{example}
 
An \textbf{integral submanifold} of a $\cCi(M)$-submodule $\mathcal{F}$ of $\mathfrak{X}(M)$ is an immersed submanifold $N\subseteq M$ satisfying, for $m\in N$,
 \begin{equation*}
 (\Delta_{\mathcal{F}})_m=T_mN\leq T_mM.
 \end{equation*}
 A connected integral submanifold is called a \textbf{leaf} of $\mathcal{F}$ if it is not properly contained in any connected integral submanifold of $\mathcal{F}$.
 
\begin{example}\label{ex: R-part}
 The module $\mathcal{F}_\vartheta$ from Example \ref{ex: sing-d} gives the smooth partition of the real line into three (embedded) leaves $\R=\R^-\cup\{0\}\cup\R^+$.
\end{example}

The involutivity of a $\cCi(M)$-submodule of $\mathfrak{X}(M)$ is generally not enough for the leaves of the module to form a partition of $M$. Let us now recall what the extra needed hypotheses are.

\begin{definition}\label{def: loc-mod}
 Let $\mathcal{F}$ be a $\cCi(M)$-submodule of $\mathfrak{X}(M)$. A vector field $X\in\mathfrak{X}(M)$ is \textbf{locally in} $\mathcal{F}$ if, for every $m\in M$, there is $Y\in\mathcal{F}$ and a neighbourhood $U$ of $m$ such that $\rest{X}{U}=\rest{Y}{U}$. The submodule $\mathcal{F}$ is called \textbf{local} if it contains all vector fields that are locally in $\mathcal{F}$.
\end{definition}

It is always the case for the characteristic module of a symmetric bivector field.

\begin{lemma}\label{lem: loc}
 For every $\vartheta\in\mathfrak{X}^2_\emph{sym}(M)$, the characteristic module $\mathcal{F}_\vartheta$ is local. 
\end{lemma}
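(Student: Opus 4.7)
The plan is to use a partition of unity argument, which is the standard technique for turning local data on a manifold into global data, and which works here precisely because $\mathcal{F}_\vartheta$ is defined via the $\cCi(M)$-linear map $\vartheta:\Omega^1(M)\rightarrow\mathfrak{X}(M)$.

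Let $X\in\mathfrak{X}(M)$ be locally in $\mathcal{F}_\vartheta$. By Definition \ref{def: loc-mod}, for each $m\in M$ there exist a neighbourhood $U_m$ and $Y_m\in\mathcal{F}_\vartheta$ with $\rest{X}{U_m}=\rest{Y_m}{U_m}$. Write $Y_m=\vartheta(\alpha_m)$ for some $\alpha_m\in\Omega^1(M)$. Taking a locally finite refinement $\{U_i\}_{i\in I}$ of $\{U_m\}_{m\in M}$ and a subordinate partition of unity $\{\rho_i\}_{i\in I}$, I would then set
\begin{equation*}
 \alpha:=\sum_{i\in I}\rho_i\,\alpha_i\in\Omega^1(M),
\end{equation*}
where $\alpha_i$ is the $1$-form associated to the open set $U_i$ in the refinement.

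The key step is to verify $X=\vartheta(\alpha)$ pointwise. For $m\in M$, using $\cCi(M)$-linearity of $\vartheta$,
\begin{equation*}
 \vartheta(\alpha)_m=\sum_{i\in I}\rho_i(m)\,\vartheta(\alpha_i)_m.
\end{equation*}
Whenever $\rho_i(m)\neq 0$, we have $m\in\supp(\rho_i)\subseteq U_i$, so $\vartheta(\alpha_i)_m=X_m$ by the defining property of $\alpha_i$. Hence each non-vanishing summand equals $\rho_i(m)\,X_m$, and since $\sum_i\rho_i(m)=1$, we get $\vartheta(\alpha)_m=X_m$. Therefore $X=\vartheta(\alpha)\in\mathcal{F}_\vartheta$.

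There is no real obstacle here: the argument only requires that the defining map $\vartheta:\Omega^1(M)\to\mathfrak{X}(M)$ is $\cCi(M)$-linear and that one can glue $1$-forms by a partition of unity, both of which are standard. The same proof would work for the characteristic module of any tensorial $\cCi(M)$-linear map from sections of a vector bundle into $\mathfrak{X}(M)$.
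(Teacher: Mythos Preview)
Your proof is correct and follows essentially the same approach as the paper: both use a partition of unity subordinate to the cover $\{U_m\}$ to glue the locally given $1$-forms into a global $\alpha\in\Omega^1(M)$ with $\vartheta(\alpha)=X$, relying on the $\cCi(M)$-linearity of $\vartheta$ for the pointwise verification. The only cosmetic difference is that you pass to a locally finite refinement explicitly, whereas the paper works directly with the original cover; this makes no substantive difference.
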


\begin{proof}
 For a proof, see Appendix \ref{app: some-proofs}.
\end{proof}

Let $\mathcal{F}$ be a local $\cCi(M)$-submodule of $\mathfrak{X}(M)$. For an open subset $U\subseteq M$, we introduce the $\cCi(U)$-submodule of $\mathfrak{X}(U)$:
\begin{equation*}
 \Gamma(U,\mathcal{F}):=\{ X\in\mathfrak{X}(U)\,|\,fX\in\mathcal{F}\text{ for all } f\in\cCi_c(U)\},
\end{equation*}
where the subscript `c' denotes compactly supported.

\begin{definition}
A local $\cCi(M)$-submodule $\mathcal{F}\subseteq \mathfrak{X}(M)$ is \textbf{locally finitely generated} if for every $m\in M$, there is a neighbourhood $U\subseteq M$ and a collection $\{ X_i\}_{i=1}^r\subseteq\Gamma(U,\mathcal{F})$, $r\in\N$, that generates $\Gamma_c(U,\mathcal{F}):=\Gamma(U,\mathcal{F})\cap\mathfrak{X}_c(U)$ as a $\cCi_c(U)$-module. If $\mathcal{F}$ is moreover involutive, we call it a \textbf{singular foliation}.
\end{definition}

We have that $\mathcal{F}_\vartheta$ is not only local, but it is also locally finitely generated.

\begin{lemma}\label{lem: loc-fin}
 For every $\vartheta\in\mathfrak{X}^2_\emph{sym}(M)$, the characteristic module $\mathcal{F}_\vartheta$ is locally finitely generated. 
\end{lemma}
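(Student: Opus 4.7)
The plan is to produce explicit local generators using a coordinate chart and the fact that every $1$-form is locally a $\cCi(M)$-combination of coordinate differentials.

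Fix $m\in M$ and choose a coordinate chart $(U,x^1,\ldots,x^n)$ around $m$. Define
\begin{equation*}
 X_i:=\vartheta(\dif x^i)\in\mathfrak{X}(U), \qquad i=1,\ldots,n.
\end{equation*}
I claim that $\{X_i\}_{i=1}^n$ generates $\Gamma_c(U,\mathcal{F}_\vartheta)$ as a $\cCi_c(U)$-module.

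First I would check that each $X_i$ actually belongs to $\Gamma(U,\mathcal{F}_\vartheta)$. For any $f\in\cCi_c(U)$, the $1$-form $f\,\dif x^i$ has compact support in $U$, hence extends by zero to a global $1$-form $\tilde\alpha\in\Omega^1(M)$. Then $\vartheta(\tilde\alpha)\in\mathcal{F}_\vartheta$ equals $fX_i$ on $U$ and vanishes off $U$, so $fX_i\in\mathcal{F}_\vartheta$. Hence $X_i\in\Gamma(U,\mathcal{F}_\vartheta)$.

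Next, take $X\in\Gamma_c(U,\mathcal{F}_\vartheta)$ and set $K:=\supp(X)\subseteq U$. Choose a bump function $\psi\in\cCi_c(U)$ with $\psi\equiv 1$ on a neighbourhood of $K$, so that $\psi X=X$ (as a vector field extended by zero to $M$). Since $X\in\Gamma(U,\mathcal{F}_\vartheta)$, the product $\psi X$ lies in $\mathcal{F}_\vartheta$; that is, there exists $\alpha\in\Omega^1(M)$ with $\vartheta(\alpha)=\psi X=X$. On $U$ we write $\alpha|_U=\alpha_i\,\dif x^i$ for some $\alpha_i\in\cCi(U)$, and therefore
\begin{equation*}
X=\vartheta(\alpha)|_U=\alpha_i X_i \quad\text{on }U.
\end{equation*}
Multiplying by $\psi$ again gives $X=\psi X=(\psi\alpha_i)X_i$, with $\psi\alpha_i\in\cCi_c(U)$, which exhibits $X$ as a $\cCi_c(U)$-linear combination of the $X_i$. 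Since $m$ was arbitrary, $\mathcal{F}_\vartheta$ is locally finitely generated.

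The main delicate point is the careful bookkeeping between the local chart and the global submodule $\mathcal{F}_\vartheta\subseteq\mathfrak{X}(M)$: the definition of $\Gamma(U,\mathcal{F}_\vartheta)$ forces us to pass through $\cCi_c(U)$ and extension by zero, and Lemma \ref{lem: loc} (locality of $\mathcal{F}_\vartheta$) is what ultimately guarantees that the constructed generators recover any local section. No further obstruction is expected.
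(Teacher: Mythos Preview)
Your proof is correct and follows essentially the same route as the paper: pick a coordinate chart, use the generators $X_i=\vartheta(\dif x^i)$, and for any $X\in\Gamma_c(U,\mathcal{F}_\vartheta)$ multiply by a bump $\psi$ to land in $\mathcal{F}_\vartheta$, then expand the resulting $\alpha$ in coordinate differentials. Your write-up is slightly more thorough in verifying $X_i\in\Gamma(U,\mathcal{F}_\vartheta)$; note however that the closing remark about Lemma~\ref{lem: loc} is not actually used in your argument (the definition of $\Gamma(U,\mathcal{F}_\vartheta)$ via $\cCi_c(U)$ already does the work).
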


\begin{proof}
 For a proof, see Appendix \ref{app: some-proofs}.
\end{proof}

Finally, we state the singular version of Frobenius' theorem.

\begin{theorem}[\cite{AndHGSF,CraLPG,LauWISF}]\label{thm: gen-Frob}
The leaves of a singular foliation form a smooth partition.
\end{theorem}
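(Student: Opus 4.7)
The plan is to adapt the Stefan–Sussmann strategy (cf.\ \cite{Stefan,Sussmann}) to the module-theoretic setting. The natural candidates for the leaves are the orbits of the pseudogroup generated by local flows of vector fields in $\mathcal{F}$: for each $m\in M$, declare $m\sim m'$ whenever $m'$ can be reached from $m$ by a finite composition $\phi^{X_k}_{t_k}\circ\cdots\circ\phi^{X_1}_{t_1}(m)$ with $X_i\in\mathcal{F}$ and admissible times. This relation is tautologically an equivalence, so $M$ is partitioned into orbits $\{L_m\}_{m\in M}$; all the work lies in endowing each $L_m$ with the structure of a connected immersed submanifold whose tangent distribution at every point is $\Delta_{\mathcal{F}}$.

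The local model comes from the local finite generation hypothesis: near $m\in M$, pick generators $X_1,\ldots,X_r\in\Gamma(U,\mathcal{F})$ of $\Gamma_c(U,\mathcal{F})$, and reorder so that $X_1(m),\ldots,X_k(m)$ is a basis of $(\Delta_{\mathcal{F}})_m$. Define
\[
\Phi(t_1,\ldots,t_k):=\phi^{X_1}_{t_1}\circ\cdots\circ\phi^{X_k}_{t_k}(m)
\]
on a small neighbourhood of $0\in\R^k$. Since $\dif\Phi_0(\partial_{t_i})=X_i(m)$, the map $\Phi$ is an immersion at $0$, and after shrinking it remains one. The key tool needed to promote $\Phi$ to a chart on $L_m$ is the \emph{pushforward lemma}: if $X\in\mathcal{F}$ has sufficiently small support and $\phi^X_s$ denotes its flow, then $(\phi^X_s)_*Y\in\mathcal{F}$ for every $Y\in\mathcal{F}$. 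Involutivity gives the infinitesimal version, $[X,Y]\in\mathcal{F}$; the passage to finite $s$ is achieved by expanding $(\phi^X_s)_*X_i=\sum_j c_{ij}(s)X_j$ in the generators (possible by local finite generation, after shrinking $U$) and recognizing that the coefficients $c_{ij}(s)$ satisfy a linear ODE derived from $\tfrac{d}{ds}(\phi^X_s)_*X_i=-(\phi^X_s)_*[X,X_i]$.

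The pushforward lemma then implies that each $X_i(\Phi(t))$ lies in the image of $\dif\Phi_t$, so that $\dif\Phi_t(\R^k)=(\Delta_{\mathcal{F}})_{\Phi(t)}$ is of constant dimension $k$ along $\Phi$; in particular $\Phi$ parametrizes an integral submanifold of $\Delta_{\mathcal{F}}$ contained in $L_m$. Two such parametrizations, possibly centered at different base points of the same orbit, are smoothly compatible by another application of the pushforward lemma to the flows that realize the change of base point. This equips each $L_m$ with the structure of a connected immersed submanifold whose tangent distribution is $\Delta_{\mathcal{F}}|_{L_m}$, and maximality identifies $L_m$ with a leaf of $\mathcal{F}$. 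The smoothness of the partition is then immediate: given $u\in T_mN_m=(\Delta_{\mathcal{F}})_m$, pick $X\in\mathcal{F}\subseteq\mathfrak{X}(M)$ with $X_m=u$; by construction $X_{m'}\in(\Delta_{\mathcal{F}})_{m'}=T_{m'}N_{m'}$ for every $m'\in M$.

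The main obstacle is the pushforward lemma together with the careful bookkeeping needed to certify that $\Phi$ sees the whole orbit near $m$ and not merely a lower-dimensional slice. Without involutivity the infinitesimal input $[X,Y]\in\mathcal{F}$ is unavailable and the tangent dimension of $\Phi$ can drop away from $0$; without local finite generation there is no ambient basis in which to write $(\phi^X_s)_*X_i$ and the ODE argument stalls. All the other steps, including the construction of the equivalence relation and the verification of the smoothness condition for the partition, are formal once this lemma is in hand.
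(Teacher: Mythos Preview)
The paper does not prove this theorem: it is stated with a citation to \cite{AndHGSF,CraLPG,LauWISF} and used as a black box. There is therefore no ``paper's own proof'' to compare your attempt against.

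That said, your sketch is a faithful outline of the standard argument one finds in those references (the module-theoretic reformulation of Stefan--Sussmann). You have correctly identified the two load-bearing hypotheses and where they enter: involutivity feeds the infinitesimal step $[X,Y]\in\mathcal{F}$, and local finite generation lets you write $(\phi^X_s)_*X_i$ in a fixed finite frame so that the linear ODE for the coefficients closes up. One small point of phrasing: the cleanest way to get $\dif\Phi_t(\R^k)=(\Delta_{\mathcal{F}})_{\Phi(t)}$ is not to argue that ``each $X_i(\Phi(t))$ lies in the image of $\dif\Phi_t$'' directly, but rather to observe that the pushforward lemma makes each flow $\phi^X_s$ a bijection on $\Delta_{\mathcal{F}}$ (since $-X\in\mathcal{F}$ as well), so $\dim(\Delta_{\mathcal{F}})$ is constant along orbits; then $\dif\Phi_t(\R^k)\subseteq(\Delta_{\mathcal{F}})_{\Phi(t)}$ together with the dimension count gives equality. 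Your version can be read this way, but the sentence as written sounds like it is asserting the reverse inclusion first. Also, the locality axiom on $\mathcal{F}$ (Definition~\ref{def: loc-mod}) is what guarantees that compactly supported truncations of the $X_i$ still lie in $\mathcal{F}$, which you implicitly use when invoking flows of ``$X\in\mathcal{F}$ with sufficiently small support''; it would be worth naming that explicitly.
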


By Lemmas \ref{lem: loc}, \ref{lem: loc-fin} and Theorem \ref{thm: gen-Frob}, we have the following:

\begin{corollary}\label{cor: char-part}
 An involutive symmetric Poisson structure $(\vartheta,\nabla)$ on $M$ gives a smooth partition of $M$ into leaves of the characteristic module, that is, for every leaf $N\subseteq M$ of $\mathcal{F}_\vartheta$, we have $\rest{\im\vartheta}{N}=TN$. 
\end{corollary}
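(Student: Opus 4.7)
The plan is essentially an assembly argument, since all the heavy lifting has already been carried out in the preceding lemmas. I would proceed by checking that the characteristic module $\mathcal{F}_\vartheta$ of an involutive symmetric Poisson structure fulfills every hypothesis of the singular Frobenius theorem (Theorem \ref{thm: gen-Frob}), and then extract the pointwise statement about $\im\vartheta$ from the defining property of a leaf.

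First I would observe that $\mathcal{F}_\vartheta$ is a $\cCi(M)$-submodule of $\mathfrak{X}(M)$ that is local by Lemma \ref{lem: loc} and locally finitely generated by Lemma \ref{lem: loc-fin}. Since $(\vartheta,\nabla)$ is assumed to be involutive, $\mathcal{F}_\vartheta$ is in addition involutive with respect to the Lie bracket. Hence $\mathcal{F}_\vartheta$ qualifies as a singular foliation in the sense of the excerpt, and Theorem \ref{thm: gen-Frob} immediately yields that its leaves form a smooth partition of $M$.

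It then remains to identify, on each such leaf $N$, the tangent bundle $TN$ with the restriction $\rest{\im\vartheta}{N}$. For this I would use the example in the text just after the definition of integral submanifold, which records the identity $\im\vartheta=\Delta_{\mathcal{F}_\vartheta}$. By the very definition of an integral submanifold, any $m\in N$ satisfies $T_mN=(\Delta_{\mathcal{F}_\vartheta})_m$, and combining the two gives $T_mN=(\im\vartheta)_m$. Ranging over $m\in N$ produces the desired equality $\rest{\im\vartheta}{N}=TN$.

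There is no real obstacle here; the only thing worth double-checking is that the hypothesis of being \emph{involutive} in the definition we adopted (involutivity of $\mathcal{F}_\vartheta$) matches the involutivity hypothesis of Theorem \ref{thm: gen-Frob}, and that the notions of distribution $\Delta_{\mathcal{F}}$ used in the statement of the singular Frobenius theorem coincide with $\im\vartheta$ for $\mathcal{F}=\mathcal{F}_\vartheta$. Both have been arranged in the preceding paragraphs, so the corollary follows at once.
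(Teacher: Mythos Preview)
Your proposal is correct and follows exactly the same approach as the paper: invoke Lemmas~\ref{lem: loc} and~\ref{lem: loc-fin} together with the involutivity assumption to verify that $\mathcal{F}_\vartheta$ is a singular foliation, then apply Theorem~\ref{thm: gen-Frob}. Your explicit unpacking of the identity $\rest{\im\vartheta}{N}=TN$ via $\im\vartheta=\Delta_{\mathcal{F}_\vartheta}$ and the definition of an integral submanifold is a welcome clarification that the paper leaves implicit.
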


Note that, by Proposition \ref{prop: ssPs-involutivity}, Corollary \ref{cor: char-part} applies, in particular, to strong symmetric Poisson structures.

\begin{definition}\label{def: inv-sPs}
 Let $(\vartheta,\nabla)$ be an involutive symmetric Poisson structure on $M$, we call the associated partition of $M$ the \textbf{characteristic partition} of $\vartheta$.
\end{definition}

\subsection{Totally geodesic regular foliations}

For a connection $\nabla$, a submanifold $N\subseteq M$ is called \textbf{totally} \textbf{geodesic} if every geodesic that tangentially intersects $N$ is locally contained in $N$, that is, given a geodesic $\gamma:I\rightarrow M$, with $I$ an open interval, such that $\dot{\gamma}(t_0)\in T_{\gamma(t_0)}N$ for some $t_0\in I$, there is an open subinterval $I'$ of $I$ containing $t_0$ such that $\dot{\gamma}(t)\in T_{\gamma(t)}N$ for every $t\in I'$. A partition is called \textbf{totally geodesic} if all of its leaves are totally geodesic submanifolds.

\begin{lemma}\label{lem: reg-tot-geo}
 The regular foliation given by a regular involutive geodesically invariant distribution $\Delta$ is totally geodesic.
\end{lemma}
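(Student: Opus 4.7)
The plan is to reduce the statement to two ingredients already at hand: Frobenius' theorem (which produces the foliation) and the definition of geodesic invariance together with a foliation chart (which confines the geodesic to a single leaf).

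First, since $\Delta$ is regular and involutive, Frobenius' theorem yields a regular foliation whose leaves satisfy $T_mN = \Delta_m$ for every $m\in N$. Take any geodesic $\gamma: I\to M$ of $\nabla$ with $\dot\gamma(t_0) \in T_{\gamma(t_0)}N$ for some leaf $N$ and some $t_0\in I$. The hypothesis $T_{\gamma(t_0)}N=\Delta_{\gamma(t_0)}$ together with the geodesic invariance of $\Delta$ gives
\begin{equation*}
 \dot\gamma(t)\in\Delta_{\gamma(t)}\quad\text{for all }t\in I.
\end{equation*}

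Next, I would upgrade this tangential condition from $\Delta$ to $N$ by a local coordinate argument. Around $\gamma(t_0)$ pick a foliation chart $(U;x^1,\ldots,x^k,y^1,\ldots,y^{n-k})$ adapted to the foliation, so that the plaques are the slices $\{y=\text{const}\}$ and $\Delta|_U=\mathrm{span}\{\partial_{x^1},\ldots,\partial_{x^k}\}$; we may also assume $N\cap U$ is the slice through $\gamma(t_0)$, corresponding to $y=0$. Let $I'\subseteq I$ be an open subinterval containing $t_0$ with $\gamma(I')\subseteq U$ and write $\gamma(t)=(x(t),y(t))$. The condition $\dot\gamma(t)\in\Delta_{\gamma(t)}$ forces $\dot y^j(t)=0$ for every $j$ and every $t\in I'$, so $y(t)\equiv 0$ on $I'$ and hence $\gamma(I')\subseteq N\cap U\subseteq N$. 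This is precisely the local totally geodesic condition.

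There is essentially no obstacle here: the whole content is that for regular foliations, \emph{tangency of velocities to the distribution is the same as staying in a single leaf}, which is just the foliation chart observation. The only mild care needed is to shrink $I$ to ensure $\gamma$ stays in a single chart (and hence a single plaque) around $t_0$, so that \textquotedblleft $\dot y=0$ implies $\gamma$ sits inside $N$\textquotedblright\ is correctly interpreted for the (possibly non-embedded) immersed leaf. With this taken care of, the proof is a two-line application of Theorem~\ref{thm: LewisT}'s hypothesis plus Frobenius.
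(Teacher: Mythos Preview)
Your proof is correct and follows essentially the same approach as the paper: both use geodesic invariance to get $\dot\gamma(t)\in\Delta_{\gamma(t)}$ for all $t$, then work in a foliation chart to conclude the transverse coordinates are constant. The only cosmetic difference is that you argue directly ($\dot y^j\equiv 0\Rightarrow y^j$ constant), while the paper phrases the same step as a contradiction via the mean value theorem.
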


\begin{proof}
 Consider a leaf $N_0\subseteq M$ of the regular foliation and a geodesic $\gamma:I\rightarrow M$ such that $\dot{\gamma}(t_0)\in T_{\gamma(t_0)}N_0=\Delta_{\gamma(t_0)}$ for some $t_0\in I$. By geodesic invariance, we have that $\dot{\gamma}(t)\in\Delta_{\gamma(t)}$ for every $t\in I$. Choose a normal local coordinate chart $(U,\{ x^i\})$, $U\subseteq M$, for $\Delta$ such that $\gamma(t_0)\in U$. Namely, for some subinterval $I'\subset I$ and $\gamma^i:=x^i\circ\gamma:I'\rightarrow \R$, we have that $\gamma^i(t_0)=0$ for every $i\in\{ r+1\varlist n\}$, where $r:=\rk\Delta=\dim N_0$. If there is $t_1\in I'$ such that $\gamma(t_1)$ lies in a leaf $N\neq N_0$, we find $i_0\in\{ r+1\varlist n\}$ such that $\gamma^{i_0}(t_1)\neq 0$. Therefore, by \textit{Lagrange's mean value theorem}, we find $t_2\in (t_0,t_1)$, or $(t_1,t_0)$, such that $\dot{\gamma}^{i_0}(t_2)\neq 0$, so $\dot{\gamma}(t_2)\notin\spann\{ \rest{\partial_{x^1}}{\gamma(t_2)}\varlist\rest{\partial_{x^r}}{\gamma(t_2)}\}=\Delta_{\gamma(t_2)}$, which contradicts geodesic invariance. Hence, $\gamma(t)\in N_0$ for every $t\in I'$, that is, $N_0$ is totally geodesic.
\end{proof}

The following result, which, in particular applies to strong symmetric Poisson structures, is now an easy consequence of Theorem \ref{thm: LewisT} and Lemma \ref{lem: reg-tot-geo}.

\begin{proposition}\label{prop: reg-fol-ssPs}
 The characteristic regular foliation of a regular involutive symmetric Poisson structure is totally geodesic.
\end{proposition}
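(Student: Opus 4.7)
The plan is essentially to assemble the pieces already established in the excerpt and feed them into Lemma \ref{lem: reg-tot-geo}. Let $(\vartheta,\nabla)$ be a regular involutive symmetric Poisson structure, and let $\Delta:=\im\vartheta$ denote its characteristic distribution. I need to verify the three hypotheses of Lemma \ref{lem: reg-tot-geo}: that $\Delta$ is a regular distribution, that it is involutive, and that it is geodesically invariant.

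First, regularity of $\Delta$ is immediate from the assumption that $(\vartheta,\nabla)$ is regular. Geodesic invariance is Corollary \ref{cor: char-reg-sPs}, since every symmetric Poisson structure is in particular a symmetric Poisson structure (regularity is the hypothesis of the corollary, which is met). So the only point that needs a brief argument is the involutivity of $\Delta$ in the Frobenius sense, i.e., $[\Gamma(\Delta),\Gamma(\Delta)] \subseteq \Gamma(\Delta)$.

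For this, I would invoke Lemma \ref{lem: reg-d}, which states that in the regular case $\Gamma(\im\vartheta)=\mathcal{F}_\vartheta$. By Definition, the hypothesis that $(\vartheta,\nabla)$ is involutive means precisely that $\mathcal{F}_\vartheta$ is involutive for the Lie bracket; combining these two facts yields
\begin{equation*}
[\Gamma(\Delta),\Gamma(\Delta)] = [\mathcal{F}_\vartheta,\mathcal{F}_\vartheta] \subseteq \mathcal{F}_\vartheta = \Gamma(\Delta).
\end{equation*}
Thus $\Delta$ is an involutive regular distribution, and by the classical Frobenius theorem it integrates to a regular foliation, which is nothing but the characteristic partition of $\vartheta$ (Corollary \ref{cor: char-part}).

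With all three hypotheses of Lemma \ref{lem: reg-tot-geo} verified, the lemma directly implies that every leaf is totally geodesic, so the characteristic regular foliation is totally geodesic. I do not anticipate any obstacle here: the statement is genuinely a clean corollary of the earlier results, and the only care needed is to use regularity in the right place to pass between the characteristic module $\mathcal{F}_\vartheta$ and the sections $\Gamma(\im\vartheta)$ of the characteristic distribution, so that the involutivity hypothesis on $\mathcal{F}_\vartheta$ translates to the Frobenius involutivity required by Lemma \ref{lem: reg-tot-geo}.
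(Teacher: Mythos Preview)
Your proposal is correct and follows essentially the same route as the paper, which simply states that the proposition is an easy consequence of Theorem \ref{thm: LewisT} and Lemma \ref{lem: reg-tot-geo}. You have just unpacked this a bit more: you invoke Corollary \ref{cor: char-reg-sPs} (which is the application of Theorem \ref{thm: LewisT}) for geodesic invariance, and you make explicit the use of Lemma \ref{lem: reg-d} to pass from involutivity of $\mathcal{F}_\vartheta$ to involutivity of $\Gamma(\Delta)$, which the paper leaves implicit.
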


It is also true that even the characteristic partition of a singular involutive symmetric structure is totally geodesic (Theorem \ref{thm:nabla-geodesic-part-of-inv-sym-Poisson}), but its proof is much more subtle and requires the theory we develop in Sections \ref{sec: leaf-met} and \ref{sec: leaf-con}.

\begin{example}\label{ex: 3-fol} The punctured plane $M:=\R^2\setminus\lbrace 0\rbrace$ admits nowhere vanishing vector fields:
\begin{align*}
 R&:=x\,\partial_x+y\,\partial_y, & S&:=-y\,\partial_x+x\,\partial_y, & H&:=x\,\partial_x-y\,\partial_y.
\end{align*}
Each of them defines the symmetric bivector field
\begin{align*}
 \vartheta_R&:=R\otimes R, & \vartheta_S&:=S\otimes S, & \vartheta_H&:=H\otimes H,
\end{align*}
and the regular foliation of $M$, see Figure \ref{fig: partitions}. A torsion-free connection $\nabla$ makes $(\vartheta_X,\nabla)$, where $X\in\lbrace R,S,H\rbrace$, a strong symmetric Poisson structure if and only if $\nabla_XX=0$. Respectively, we get the following equations for the unknown $\nabla$:
\begin{align*}
 0&=\nabla_RR=x\,\partial_x+y\,\partial_y+x^2\nabla_{\partial_x}\partial_x+2xy\nabla_{\partial_x}\partial_y+y^2\nabla_{\partial_y}\partial_y,\\
 0&=\nabla_SS=-x\,\partial_x-y\,\partial_y+y^2\nabla_{\partial_x}\partial_x-2xy\nabla_{\partial_x}\partial_y+x^2\nabla_{\partial_y}\partial_y,\\
 0&=\nabla_HH=x\,\partial_x+y\,\partial_y+x^2\nabla_{\partial_x}\partial_x-2xy\nabla_{\partial_x}\partial_y+y^2\nabla_{\partial_y}\partial_y.
\end{align*}
Each of these equations is a system of two algebraic equations for six unknown functions. Their solutions are not unique, but a solution for each one is given by
 \begin{align}\label{eq: 3-reg-tot-geo}
\nabla_{\partial_x}\partial_x&=\nabla_{\partial_y}\partial_y:=\pm\frac{1}{x^2+y^2}R, & \nabla_{\partial_x}\partial_y&:=0,
 \end{align} 
where we choose the $+$ sign for $X=S$ and the $-$ sign for $X=R, H$. All three regular foliations are thus coming from a (non-parallel) strong symmetric Poisson structure, hence, by Proposition \ref{prop: reg-fol-ssPs}, they are totally geodesic for $\nabla$ as in \eqref{eq: 3-reg-tot-geo}.
\end{example}

\begin{figure}[ht]
\begin{center}
\includegraphics[height=4cm,keepaspectratio]{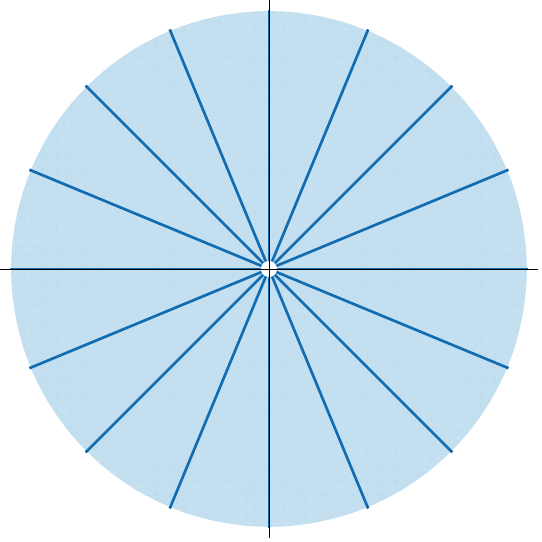}
\hspace{15pt}
\includegraphics[height=4cm,keepaspectratio]{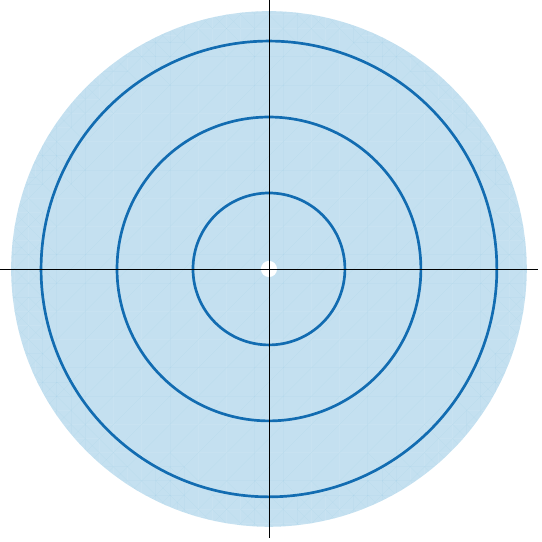}
\hspace{15pt}
\includegraphics[height=4cm,keepaspectratio]{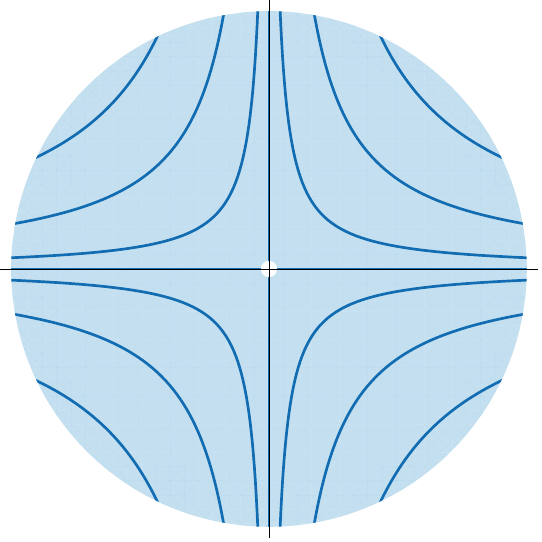}
\end{center}
\caption{Regular foliations of the punctured plane.}\label{fig: partitions}
\end{figure}

\subsection{The leaf metric}\label{sec: leaf-met}
We say that a symmetric bivector field $\vartheta\in\symbi$ is \textbf{tangent to a submanifold} $N\subseteq M$ if, for all $m\in N$,
\begin{equation*}
 \vartheta_m\in\Sym^2T_mN\leq\Sym^2T_mM.
\end{equation*}
When $\vartheta\in\mathfrak{X}^2_\text{sym}(M)$ is tangent to a submanifold $N\subseteq M$, we obtain the symmetric bivector field $\vartheta_N\in\mathfrak{X}^2_\text{sym}(N)$ by restriction of $\vartheta$.

\begin{proposition}\label{prop: symbi-rest}
 Let $(\vartheta,\nabla)$ be an involutive symmetric Poisson structure on $M$. The bivector field $\vartheta$ is tangent to every leaf $N$ of the characteristic partition, and moreover, the restricted symmetric bivector field $\vartheta_N$ is non-degenerate. 
\end{proposition}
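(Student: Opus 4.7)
The plan is to reduce the tangency statement to a pointwise algebraic fact about symmetric bivectors on a vector space, and then harvest smoothness and non-degeneracy from the structural properties of the characteristic partition given by Corollary \ref{cor: char-part}.

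The key algebraic lemma I would first prove is: for any finite-dimensional real vector space $V$ and any $\vartheta_0\in \Sym^2 V$, letting $W:=\im(\vartheta_0^\sharp)\subseteq V$, one has $\vartheta_0\in \Sym^2 W$. The argument is that the symmetry of $\vartheta_0$ makes the linear map $\vartheta_0^\sharp:V^*\to V$ equal to its own transpose under the canonical identification $V^{**}\cong V$, so
\begin{equation*}
\ker(\vartheta_0^\sharp)=(\im\vartheta_0^\sharp)^0=W^0.
\end{equation*}
Therefore $\vartheta_0(\alpha,\beta)=0$ whenever $\alpha\in W^0$. Choosing a complement $V=W\oplus W^c$ with induced splitting $V^*=(W^c)^0\oplus W^0$, this vanishing forces the $W^c\otimes V$ and $V\otimes W^c$ components of $\vartheta_0$ to be zero, i.e.\ $\vartheta_0\in W\otimes W$, and by symmetry $\vartheta_0\in \Sym^2 W$.

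With this in hand, tangency follows at once: let $N$ be a leaf of the characteristic partition and $m\in N$. By Corollary \ref{cor: char-part}, $(\im\vartheta)_m=T_m N$, so applying the lemma at the point $m$ with $V=T_mM$ gives $\vartheta_m\in \Sym^2 T_mN\subseteq \Sym^2 T_mM$, which is precisely tangency. The restriction $\vartheta_N$ is then well defined as a section of $\Sym^2 TN$; smoothness is automatic because $\vartheta|_N$ is a smooth section of $\Sym^2 TM|_N$ that, by the pointwise statement, takes values in the smooth subbundle $\Sym^2 TN\subseteq \Sym^2 TM|_N$.

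For non-degeneracy, I would observe that at each $m\in N$ the map $\vartheta_m^\sharp:T_m^*M\to T_mM$ has image $T_mN$ and, again by self-transposition, kernel $T_mN^0$. Hence it factors through an isomorphism
\begin{equation*}
T_m^*N \cong T_m^*M/T_mN^0 \xrightarrow{\sim} T_mN,
\end{equation*}
and this factored map coincides with $(\vartheta_N)_m^\sharp$. This shows $(\vartheta_N)_m$ is non-degenerate for every $m\in N$. I expect the only real obstacle to be the algebraic lemma in the first paragraph, which must use the symmetry of $\vartheta$ in an essential way (and which incidentally explains why a ``Poisson'' condition is not needed for tangency: once the characteristic partition is known to exist, tangency is purely algebraic). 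Everything else is a routine verification using the definition of immersed leaves and the dimension count $\dim T_m^*N=\dim T_mN$.
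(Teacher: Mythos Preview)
Your proposal is correct and follows essentially the same route as the paper's proof. The paper carries out the argument in normal local coordinates for $N$ (showing that the components $\vartheta(\dif x^i,\dif x^j)$ with $j>\dim N$ vanish because $\vartheta$ maps $T^*_mM$ into $T_mN=(\im\vartheta)_m$), whereas you isolate the same linear-algebra fact $\ker\vartheta_m=(\im\vartheta_m)^0$ in coordinate-free form; the non-degeneracy arguments are likewise equivalent.
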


\begin{proof}
We recall that $(\im\vartheta)_m=T_mN$ for any leaf $N\subseteq M$ and every $m\in N$. Therefore, given a neighbourhood $U'\subseteq M$ of $m\in M$, we have, for every $\zeta\in T^*_mM$ and $f\in\cCi(U')$ such that $\rest{f}{N\cap U'}=0$, that
\begin{equation*}
 0=\vartheta(\zeta)f=\vartheta(\zeta,\rest{\dif f}{m}).
\end{equation*}
In a normal local coordinate chart $(U,\{ x^i\})$, $U\subseteq M$, for the submanifold $N$,
\begin{equation*}
 \vartheta(\rest{\dif x^i}{m},\rest{\dif x^j}{m})=0
 \end{equation*}
for $i\in\{ 1\varlist n\}$ and $j\in\{ r+1\varlist n\}$, where $r:=\dim N$. It follows that
\begin{equation*}
 \vartheta_m=\frac{1}{2}\sum_{i,j=1}^r\vartheta(\rest{\dif x^i}{m},\rest{\dif x^j}{m})\,\rest{\partial_{x^i}}{m}\odot \rest{\partial_{x^j}}{m}.
\end{equation*}
As $\rest{\partial_{x^i}}{m}\in T_mN$ for every $i\in\{ 1\varlist r\}$, we have that $\vartheta_m\in\Sym^2T_mN$.

As $\vartheta_m(T^*_mN)= T_mN$ is clearly satisfied, the bivector $(\vartheta_N)_m$ is non-degenerate.
\end{proof}

\sloppy It follows directly from Proposition \ref{prop: symbi-rest} that there is an induced \mbox{(pseudo-)Rie}mannian metric on every leaf of the characteristic partition of an involutive symmetric Poisson structure. 

\begin{definition}
 Let $(\vartheta,\nabla)$ be an involutive symmetric Poisson structure and $N$ be a leaf of the characteristic partition. We call the \mbox{(pseudo-)Rie}mannian metric $g_N:=\vartheta^{-1}_N$ the \textbf{leaf metric} of $N$. 
\end{definition}

 The more general concept of the characteristic metric $g_\vartheta$ (Definition \ref{def: char-metric}), which makes sense even for non-involutive symmetric Poisson structures, is obviously related to that of the leaf metric. For every leaf $N\subseteq M$ and $m\in N$, we have
 \begin{equation*}
 g_{\vartheta_m}=(g_N)_m.
 \end{equation*}

\subsection{The leaf connection}\label{sec: leaf-con}
 In this section we show that, in addition to the \mbox{(pseudo-)Riem}annian metric, we also have an induced connection on every leaf of the characteristic partition of an involutive symmetric Poisson structure.

For a torsion-free connection $\nabla$ on $M$ and $X,Y\in\mathfrak{X}(M)$, we have
\begin{equation*}
 \nabla_XY=\frac{1}{2}(\pg{X,Y}_s+[X,Y]).
\end{equation*} 
Given an involutive symmetric Poisson structure $(\vartheta,\nabla)$, it follows from Propositions \ref{prop: sPs-char-d} and \ref{prop: ssPs-involutivity} that, for $X,Y\in\mathcal{F}_\vartheta$, we have
\begin{equation*}
 \nabla_XY\in\mathcal{F}_\vartheta.
\end{equation*}
Namely, for an arbitrary leaf $N\subseteq M$ and $u\in T_mN=(\im\vartheta)_m$, we find 
\begin{equation}\label{eq: leaf-con-1}
 \nabla_u Y\in T_mN.
\end{equation}

\begin{lemma}\label{lem: leaf-con}
 Let $(\vartheta,\nabla)$ be an involutive symmetric Poisson structure on $M$ and $N$ be a leaf of the characteristic partition. Given $u\in T_mN$ and $\beta\in\Omega^1(N)$, consider $\hat{\beta}\in\Omega^1(M)$ an arbitrary extension of $\beta$ around $m$ (that is, $\beta$ and $\hat{\beta}$ coincide on a neighbourhood of $m$ in $N$). Then, the vector $\nabla_u\vartheta(\hat{\beta})$ is tangent to $N$ and independent of $\hat{\beta}$.
\end{lemma}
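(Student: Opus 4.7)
The plan is to handle the two assertions separately. The tangency $\nabla_u \vartheta(\hat{\beta}) \in T_m N$ is immediate from \eqref{eq: leaf-con-1}: since $\vartheta(\hat{\beta}) \in \mathcal{F}_\vartheta$ by definition of the characteristic module, applying \eqref{eq: leaf-con-1} to $Y := \vartheta(\hat{\beta})$ and the given $u \in T_m N$ yields the claim. No further input is required here.

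For the independence of the extension, let $\hat{\beta}_1$ and $\hat{\beta}_2$ both extend $\beta$ in an open $N$-neighborhood $U$ of $m$, and set $\alpha := \hat{\beta}_1 - \hat{\beta}_2 \in \Omega^1(M)$. The hypothesis means precisely that $\alpha_{m'}$ annihilates $T_{m'} N$ for every $m' \in U$. By Proposition \ref{prop: symbi-rest}, $\vartheta$ is tangent to $N$, so $\vartheta_{m'} \in \Sym^2 T_{m'} N$ automatically kills any covector that vanishes on $T_{m'} N$. Consequently $\vartheta(\alpha)$ is a vector field on $M$ vanishing identically on $U \subseteq N$. The claim then reduces to proving that any vector field $Y \in \mathfrak{X}(M)$ that vanishes on $N$ near $m$ satisfies $\nabla_u Y = 0$ for every $u \in T_m N$.

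This last step is a short local calculation in an adapted chart $(x^1, \ldots, x^r, y^1, \ldots, y^{n-r})$ around $m$ with $N$ cut out by $y^j = 0$: by Hadamard's lemma we can write $Y = \sum_j y^j Y_j$ for some smooth local vector fields $Y_j$, and then both $y^j(m) = 0$ and $u(y^j) = 0$ (since $u \in T_m N$) force $\nabla_u Y = 0$. The main obstacle is essentially bookkeeping, namely keeping track of where the extension condition is used and sharply distinguishing the vanishing of $\alpha$ on $TN$ (a condition on pairings with vectors along $N$) from the vanishing of $\vartheta(\alpha)$ on $N$ (a condition on the values of a vector field along $N$); no deeper geometric input beyond Proposition \ref{prop: symbi-rest} and \eqref{eq: leaf-con-1} is needed.
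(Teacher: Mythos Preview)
Your proof is correct and follows the same overall strategy as the paper: tangency comes directly from \eqref{eq: leaf-con-1}, and independence is reduced to showing that $\nabla_u\vartheta(\hat\beta)=0$ whenever the restriction of $\hat\beta$ to $N$ vanishes near $m$. The only difference is in how you finish this last step: the paper picks a curve $\sigma$ in $N$ with $\dot\sigma(0)=u$ and uses the parallel-transport description of $\nabla_u$ together with $\vartheta(\hat\beta)_{\sigma(t)}=0$, whereas you pass to adapted coordinates and apply Hadamard's lemma with the Leibniz rule. Both arguments are standard; yours has the mild advantage of making explicit, via Proposition~\ref{prop: symbi-rest}, why the vanishing of $\alpha$ on $TN$ forces $\vartheta(\alpha)$ to vanish along $N$, a point the paper's write-up elides.
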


\begin{proof}
 It follows from \eqref{eq: leaf-con-1} that the vector $\nabla_u\vartheta(\hat{\beta})\in T_mM$ is tangent to $N$. To show independence from the choice of $\hat{\beta}$, it is enough, by linearity, to show that if $\hat{\beta}\in\Omega^1(M)$ satisfies $\hat{\beta}\hspace{-3pt}\rest{}{U}=0$ for some neighbourhood $U\subseteq N$ of $m$, then $\nabla_u\vartheta(\hat{\beta})=0$. Since $u\in T_mN$, there is a neighbourhood $U'\subseteq U$ of $m$ and a curve $\sigma:I\rightarrow U'$ such that $\dot{\sigma}(0)=u$. As $\hat{\beta}_{\sigma(t)}=0$ for every $t\in I$, we have that
 \begin{equation*}
 \nabla_u\vartheta(\hat{\beta})=\lim_{t\to 0}\frac{1}{t}(P^\sigma_{t,0}\vartheta(\hat{\beta}_{\sigma(t)})-\vartheta(\hat{\beta}_{m}))=0,
 \end{equation*}
 where $P^\sigma$ denotes the parallel transport along $\sigma$.
\end{proof}

By Proposition \ref{prop: symbi-rest}, we have that $\vartheta_N$ is non-degenerate, hence there is a unique $\beta\in\Omega^1(N)$ for every $Y\in\mathfrak{X}(N)$ satisfying $Y=\vartheta_N(\beta)$. Thanks to Lemma \ref{lem: leaf-con}, we can introduce the map $\nabla^N:\mathfrak{X}(N)\rightarrow\Gamma (\en TN)$, for every $u\in T_mN$, by
\begin{equation}\label{eq: leaf-con-2}
 (\nabla^N\vartheta_N(\beta))(u):=\nabla_u\vartheta(\hat{\beta}),
\end{equation}
where $\hat{\beta}\in\Omega^1(M)$ is an arbitrary extension of $\beta$ around $m$.

\begin{lemma}
 Let $(\vartheta,\nabla)$ be an involutive symmetric Poisson structure on $M$ and $N$ be a leaf of the characteristic partition. The map $\nabla^N$, given by \eqref{eq: leaf-con-2}, defines a torsion-free connection on $N$.
\end{lemma}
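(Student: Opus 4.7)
The first step is to verify that $\nabla^N$ is well-defined as a map $\mathfrak{X}(N) \to \Gamma(\End TN)$. Because $\vartheta_N$ is non-degenerate (Proposition \ref{prop: symbi-rest}), every $Y \in \mathfrak{X}(N)$ is uniquely of the form $\vartheta_N(\beta)$ for some $\beta \in \Omega^1(N)$, and Lemma \ref{lem: leaf-con} guarantees both that the output lies in $TN$ and that it is independent of the chosen extension $\hat{\beta}$. Smoothness of $\nabla^N Y$ then follows because, around any $m \in N$, one may pick a smooth local extension $\hat{\beta} \in \Omega^1(M)$, so that $\nabla\vartheta(\hat{\beta})$ is a smooth section of $T^*M \otimes TM$ whose restriction to $N$, further contracted along the inclusion $TN \hookrightarrow TM$, agrees with $\nabla^N Y$.

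Next, I would check the algebraic axioms of a connection. Additivity in $Y$ is straightforward: if $Y_i = \vartheta_N(\beta_i)$ with extensions $\hat{\beta}_i$, then $\hat{\beta}_1 + \hat{\beta}_2$ extends $\beta_1 + \beta_2$, and the $\R$-linearity of $\nabla$ on $M$ transfers. For the Leibniz rule, given $f \in \cCi(N)$ with any smooth extension $\hat{f}$ around $m \in N$, one takes the extension $\hat{f}\hat{\beta}$ of $f\beta$ and computes
\begin{equation*}
\nabla_u \vartheta(\hat{f}\hat{\beta}) = \nabla_u(\hat{f}\,\vartheta(\hat{\beta})) = u(\hat{f})\,\vartheta(\hat{\beta})_m + \hat{f}(m)\,\nabla_u \vartheta(\hat{\beta}) = u(f)\,Y_m + f(m)\,(\nabla^N Y)(u),
\end{equation*}
which gives $\nabla^N(fY) = \dif f \otimes Y + f\,\nabla^N Y$. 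The $\cCi(N)$-linearity of $\nabla^N_U Y := (\nabla^N Y)(U)$ in $U$ is automatic pointwise, since $\nabla$ on $M$ is $\cCi(M)$-linear in its first slot.

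The main step is torsion-freeness. Here the key observation is that, because $\vartheta_m \in \Sym^2 T_mN$ for every $m \in N$ by Proposition \ref{prop: symbi-rest}, any extension $\hat{\alpha} \in \Omega^1(M)$ of $\alpha \in \Omega^1(N)$ yields a vector field $\vartheta(\hat{\alpha}) \in \mathfrak{X}(M)$ whose values along $N$ depend only on the restriction $\hat{\alpha}|_{TN} = \alpha$ and coincide with $\vartheta_N(\alpha)$. Thus, for $U = \vartheta_N(\alpha)$, $V = \vartheta_N(\beta)$, the vector fields $\hat{U} := \vartheta(\hat\alpha)$ and $\hat{V} := \vartheta(\hat\beta)$ are tangent to $N$ (in a neighbourhood) and restrict to $U, V$. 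Applying the torsion-freeness of $\nabla$ gives
\begin{equation*}
(\nabla^N_U V - \nabla^N_V U)_m = \nabla_{U_m}\hat{V} - \nabla_{V_m}\hat{U} = [\hat{U}, \hat{V}]_m,
\end{equation*}
and since the Lie bracket of vector fields tangent to $N$ restricts to the Lie bracket on $N$, the right-hand side equals $[U, V]_m$. This completes the verification.

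The hardest point to articulate cleanly is the identification $\vartheta(\hat\alpha)|_N = \vartheta_N(\alpha)$ and the resulting reduction of the torsion computation on $N$ to that on $M$; everything else is a direct transcription of properties of $\nabla$ via the extension procedure, with well-definedness already secured by Lemma \ref{lem: leaf-con}.
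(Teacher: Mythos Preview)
Your proposal is correct and follows essentially the same route as the paper. Both arguments verify the connection axioms by transporting them from $\nabla$ via extensions, and both reduce torsion-freeness to the identity $[\vartheta_N(\alpha),\vartheta_N(\beta)]_m=[\vartheta(\hat\alpha),\vartheta(\hat\beta)]_m$ together with torsion-freeness of $\nabla$ on $M$; the only cosmetic difference is that the paper verifies this bracket identity explicitly in adapted coordinates, whereas you invoke the standard fact that Lie brackets of vector fields tangent to a submanifold restrict to the intrinsic Lie bracket.
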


\begin{proof}
For every $u\in T_mN$, $f\in\cCi(N)$, a neighborhood $U\subseteq N$ of $m$ and $\hat{f}\in\cCi(M)$ such that $\hat{f}\hspace{-3pt}\rest{}{U}=\rest{f}{U}$, we find a curve $\sigma:I\rightarrow U$ such that $\dot{\sigma}(0)=u$, hence,
 \begin{equation}\label{eq: leaf-con-3}
 u\hat{f}=\lim_{t\to 0}\frac{1}{t}(\hat{f}(\sigma(t))-\hat{f}(m))=\lim_{t\to 0}\frac{1}{t}(f(\sigma(t))-f(m))=uf.
 \end{equation}
Clearly, we can choose $U$, $\hat{f}$ and $\hat{\beta}\in\Omega^1(M)$ such that $\hat{f}\hspace{-3pt}\rest{}{U}=\rest{f}{U}$ and at the same time $\hat{\beta}\hspace{-3pt}\rest{}{U}=\rest{\beta}{U}$. By Lemma \ref{lem: leaf-con} and \eqref{eq: leaf-con-3}, we get
 \begin{equation*}
 \nabla^N_u(f\vartheta_N(\beta))=\nabla_u(\hat{f}
\vartheta(\hat{\beta}))=u\hat{f}+\hat{f}(m)\nabla_u\vartheta(\hat{\beta})=uf+f(m)\nabla_u^N\vartheta_N(\beta).
\end{equation*}
As $\nabla^N$ is moreover $\R$-linear, it is a connection on $N$.

We choose a normal local coordinate chart $(U,\{ x^i\})$, $U\subseteq M$, for the submanifold $N$ centred at $m$, and $\hat{\alpha},\hat{\beta}\in\Omega^1(M)$ such that $\hat{\alpha}\hspace{-3pt}\rest{}{U}=\rest{\alpha}{U}$ and $\hat{\beta}\hspace{-3pt}\rest{}{U}=\rest{\beta}{U}$. Then,
\begin{align*}
[\vartheta_N(\alpha),&\vartheta_N(\beta)]_m\\
&=\sum_{i=1}^{r}(\vartheta_N(\alpha_m) \dif x^i(\vartheta_N(\beta))-\vartheta_N(\beta_m)\dif x^i(\vartheta_N(\alpha)))\rest{\partial_{x^i}}{m},
\end{align*}
where $r:=\dim N$. By \eqref{eq: leaf-con-3}, the torsion-freeness of $\nabla$ and Lemma \ref{lem: leaf-con}, we get
\begin{align*}
 [\vartheta_N(\alpha),\vartheta_N(\beta)]_m&=\sum_{i=1}^{n}(\vartheta(\hat{\alpha}_m) \dif x^i(\vartheta(\hat{\beta}))-\vartheta(\hat{\beta}_m) \dif x^i(\vartheta(\hat{\alpha})))\rest{\partial_{x^i}}{m}\\
 &=[\vartheta(\hat{\alpha}),\vartheta(\hat{\beta})]_m=\nabla_{\vartheta(\alpha_m)}\vartheta(\hat{\beta})-\nabla_{\vartheta(\beta_m)}\vartheta(\hat{\alpha})\\
 &=\nabla^N_{\vartheta_N(\alpha_m)}\vartheta_N(\beta)-\nabla^N_{\vartheta_N(\beta_m)}\vartheta_N(\alpha).
\end{align*}
\end{proof}

So, every leaf of the characteristic partition of an involutive symmetric Poisson structure is not only naturally equipped with a \mbox{(pseudo-)Rie}mannian structure, it also carries a torsion-free connection.

\begin{definition}
 Let $(\vartheta,\nabla)$ be an involutive symmetric Poisson structure and $N$ be a leaf of the characteristic partition. The torsion-free connection $\nabla^N$ on $N$, defined by \eqref{eq: leaf-con-2}, is called the \textbf{leaf connection} of $N$.
\end{definition}

\subsection{Totally geodesic partitions}
Finally, we are in the position to generalize the result of Proposition \ref{prop: reg-fol-ssPs} to all involutive symmetric Poisson structures. The following lemma establishes a relation between the $\nabla$-parallel transport $P^{\sigma}$ and the $\nabla^N$-parallel transport $P^{\sigma,N}$ along a curve $\sigma:I\rightarrow N\subseteq M$ and will come in handy.

\begin{lemma}\label{lem: parallel}
 Let $(\vartheta,\nabla)$ be an involutive symmetric Poisson structure on $M$ and $N$ be a leaf of the characteristic partition. For a curve $\sigma:I\rightarrow N$, $t_0, t_1\in I$ and $u\in T_{\sigma(t_0)}N$, we have that
 \begin{equation*}
 P^\sigma_{t_0,t_1}u=P^{\sigma,N}_{t_0, t_1}u.
 \end{equation*}
\end{lemma}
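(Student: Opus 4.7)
The plan is to define $Y(t) := P^{\sigma,N}_{t_0,t}u$ and verify that it is also $\nabla$-parallel along $\sigma$; the lemma then follows from uniqueness of parallel transport in $TM$.

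The key preliminary observation is that, for an involutive symmetric Poisson structure, the characteristic module is preserved by $\nabla$: given $X,Y \in \mathcal{F}_\vartheta$, the identity
\[
2\nabla_X Y = [X,Y]_s + [X,Y],
\]
combined with Proposition \ref{prop: sPs-char-d} (for the symmetric bracket) and involutivity (for the Lie bracket), gives $\nabla_X Y \in \mathcal{F}_\vartheta$. Evaluated at a point $m \in N$, this yields $\nabla_v Y \in T_mN$ whenever $v \in T_mN = (\im\vartheta)_m$ and $Y \in \mathcal{F}_\vartheta$.

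I would next upgrade this to: for any vector field $Z$ on $N$ defined locally around $m$ and any $v \in T_mN$, the ambient covariant derivative $\nabla_v \hat Z$, computed via any local extension $\hat Z \in \mathfrak{X}(M)$ of $Z$, is well defined and tangent to $N$. Well-definedness follows from a direct computation in coordinates adapted to $N$: since $v$ has no transverse components, only derivatives of $\hat Z$ along $N$ and the value of $\hat Z$ at $m$ enter the coordinate formula, and both depend solely on $Z = \hat Z|_N$. Tangency follows from the previous paragraph after observing that any such $Z$ is locally of the form $\vartheta_N(\beta)$ by non-degeneracy of $\vartheta_N$ (Proposition \ref{prop: symbi-rest}), and that $\vartheta(\hat\beta) \in \mathcal{F}_\vartheta$ extends $Z$ for any $\hat\beta$ extending $\beta$.

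Comparing with the definition \eqref{eq: leaf-con-2} of the leaf connection, this identification reads $\nabla^N_v Z = \nabla_v \hat Z$. The same identity then holds along the curve $\sigma$: for $Y(t) := P^{\sigma,N}_{t_0,t}u$, viewed as a vector field on $N$ along $\sigma$, one has $\nabla_{\dot\sigma(t)}Y = \nabla^N_{\dot\sigma(t)}Y = 0$. Thus $Y$ is $\nabla$-parallel along $\sigma$ with $Y(t_0) = u$, and by uniqueness of $\nabla$-parallel transport $Y(t) = P^\sigma_{t_0,t}u$, as claimed.

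The main obstacle is the careful bookkeeping needed to match vector fields on $M$ tangent to $N$, vector fields on $N$, and vector fields along $\sigma$, together with the fact that extensions from $N$ to $M$ exist only locally. In adapted coordinates this amounts to the observation that $\Gamma^k_{ij}(m) = 0$ for $i,j \le r$, $k > r$, $m \in N$ (where $r = \dim N$), which is precisely the coordinate translation of the preservation lemma above and makes both the well-definedness of $\nabla_v \hat Z$ and its tangency to $N$ transparent.
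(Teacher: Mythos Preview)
Your proof is correct and follows essentially the same approach as the paper: take the $\nabla^N$-parallel field $Y(t)=P^{\sigma,N}_{t_0,t}u$, observe that the definition of the leaf connection gives $\nabla_{\dot\sigma}Y=\nabla^N_{\dot\sigma}Y=0$, and conclude by uniqueness of $\nabla$-parallel transport. The only difference is that you re-derive in place the preliminary facts (that $\nabla_XY\in\mathcal{F}_\vartheta$ for $X,Y\in\mathcal{F}_\vartheta$, and that $\nabla_v\hat Z$ is well defined and tangent to $N$) which the paper has already established just before the lemma as \eqref{eq: leaf-con-1} and Lemma~\ref{lem: leaf-con}; so your write-up is more self-contained but strategically identical.
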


\begin{proof}
 We find the unique vector field $X$ along the curve $\sigma$ such that $X_{t_0}=u$~and $\nabla^N_{\dot{\sigma}}X=0$. By the definition of $\nabla^N$-parallel transport, we have $
 P^{\sigma,N}_{t_0, t_1}u=X_{t_1}$. It follows from the definition of the leaf connection that $\nabla_{\dot{\sigma}}X=\nabla^N_{\dot{\sigma}}X=0$. Therefore, by the definition of $\nabla$-parallel transport, we get $ P^{\sigma}_{t_0, t_1}u=X_{t_1}=P^{\sigma,N}_{t_0, t_1}u.$
\end{proof}

\begin{theorem}\label{thm:nabla-geodesic-part-of-inv-sym-Poisson}
 The characteristic partition of an involutive symmetric Poisson structure $(\vartheta,\nabla)$ is totally geodesic. Moreover, on any leaf $N$, the pair $(g^{-1}_N,\nabla^N)$ is a non-degenerate symmetric Poisson structure. In addition, if $(\vartheta,\nabla)$ is strong, $\nabla^N$ is the Levi-Civita connection of $g_N$, that is, $(g^{-1}_N,\nabla^N)$ is also strong.
\end{theorem}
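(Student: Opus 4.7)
The plan has three parts, mirroring the three assertions of the theorem. The backbone is the observation — implicit in Lemma~\ref{lem: leaf-con} and made precise below — that $\nabla^N$ can be computed on any extension of a vector field from $N$ to $M$: if $Y \in \mathfrak{X}(N)$ and $\hat Y \in \mathfrak{X}(M)$ is any extension of $Y$ in a neighbourhood of $m\in N$, then for $u\in T_mN$ one has $\nabla^N_u Y = \nabla_u \hat Y$. Indeed, writing $Y = \vartheta_N(\beta)$ with $\beta\in\Omega^1(N)$ (possible by non-degeneracy of $\vartheta_N$, Proposition~\ref{prop: symbi-rest}), the difference $\hat Y - \vartheta(\hat\beta)$ vanishes on $N$, and the same curve-argument as in Lemma~\ref{lem: leaf-con} shows that $\nabla_u$ of a vector field vanishing on $N$ at $m$ is zero.

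For the totally geodesic claim, let $\gamma:I\to M$ be a $\nabla$-geodesic with $\dot\gamma(t_0)\in T_{\gamma(t_0)}N$ for some leaf $N$ of the characteristic partition. Let $\tilde\gamma : I'\to N$ be the unique $\nabla^N$-geodesic with initial conditions $(\gamma(t_0),\dot\gamma(t_0))$. I want to show $\tilde\gamma$ is also a $\nabla$-geodesic, so that uniqueness of $\nabla$-geodesics forces $\gamma = \tilde\gamma$ on $I'$ and hence $\gamma(I')\subseteq N$. For this, apply Lemma~\ref{lem: parallel}: parallel transports $P^{\tilde\gamma}$ and $P^{\tilde\gamma, N}$ agree on vectors tangent to $N$, so the $\nabla$- and $\nabla^N$-covariant derivatives of any vector field along $\tilde\gamma$ with values in $TN$ coincide. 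In particular $\nabla_{\dot{\tilde\gamma}}\dot{\tilde\gamma} = \nabla^N_{\dot{\tilde\gamma}}\dot{\tilde\gamma} = 0$.

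For the second assertion, I need $(\vartheta_N,\nabla^N)$ to be a symmetric Poisson structure on $N$; non-degeneracy is Proposition~\ref{prop: symbi-rest}, and then Proposition~\ref{prop: sym-Poisson-nondeg} identifies $g_N = \vartheta_N^{-1}$ as a Killing 2-tensor for $\nabla^N$. The torsion-freeness of $\nabla^N$ is built into the construction. To verify $[\vartheta_N,\vartheta_N]_s = 0$, I invoke Proposition~\ref{prop: sym-Poisson-bracket} and show
\begin{equation*}
 (\nabla^N_u \vartheta_N)(\beta,\eta) = (\nabla_u \vartheta)(\hat\beta,\hat\eta)
\end{equation*}
at any $m\in N$, for $u\in T_mN$ and $\beta,\eta\in\Omega^1(N)$ with extensions $\hat\beta,\hat\eta\in\Omega^1(M)$. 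The key sublemma is that $\nabla^N_u \beta$ equals the restriction of $\nabla_u\hat\beta$ to $T_mN$: this follows from the dual of the extension principle above, using that $\beta(V) = \hat\beta(\hat V)$ on $N$. Combined with $\vartheta_m\in \Sym^2 T_mN$, one obtains the equality term by term. Taking $u = \vartheta_N(\alpha)|_m = \vartheta(\hat\alpha)|_m$ and summing cyclically, the left-hand side vanishes because $(\vartheta,\nabla)$ is symmetric Poisson.

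Finally, if $(\vartheta,\nabla)$ is strong, Corollary~\ref{cor: ssPs-integrability} gives $\nabla_{\vartheta(\hat\alpha)}\vartheta = 0$ for every $\hat\alpha\in\Omega^1(M)$. The same identification as above yields $\nabla^N_{\vartheta_N(\alpha)}\vartheta_N = 0$, so $(\vartheta_N,\nabla^N)$ is strong, and Proposition~\ref{prop: ssPs-nondeg} identifies $\nabla^N$ with the Levi-Civita connection of $g_N$. The main point that demands care is the extension principle for $\nabla^N$, since $\vartheta_N$ is only the restriction of $\vartheta$ and $\nabla^N$ is not a priori the restriction of $\nabla$; once it is established, all three statements become near-tautological consequences of the analogous statements for $(\vartheta,\nabla)$ on $M$.
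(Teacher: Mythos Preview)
Your proposal is correct and follows essentially the same route as the paper: the totally geodesic claim via the $\nabla^N$-geodesic with matching initial data, Lemma~\ref{lem: parallel}, and uniqueness of $\nabla$-geodesics; the leaf symmetric Poisson structure via the identity $(\nabla^N_{\vartheta_N(\alpha)}\vartheta_N)(\beta,\eta)=(\nabla_{\vartheta(\hat\alpha)}\vartheta)(\hat\beta,\hat\eta)$ combined with Proposition~\ref{prop: sym-Poisson-bracket}; and the strong case via Corollary~\ref{cor: ssPs-integrability} and Proposition~\ref{prop: ssPs-nondeg}. The only organizational difference is that you isolate an ``extension principle'' (for vector fields and, dually, for $1$-forms) as a standalone sublemma, whereas the paper establishes the key identity \eqref{eq: leaf-metric-sPs} by a direct computation on $(\beta,\beta)$ using the definition \eqref{eq: leaf-con-2} of $\nabla^N$ and then polarizes --- but this is a stylistic rather than a substantive distinction.
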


\begin{proof}
Consider a leaf $N\subseteq M$ and a $\nabla$-geodesic $\gamma:I\rightarrow M$ such that $\dot{\gamma}(t_0)\in T_{\gamma(t_0)}N=(\im\vartheta)_{\gamma(t_0)}$ for some $t_0\in I$. We can now consider a $\nabla^N$-geodesic $\sigma:I'\rightarrow N$, $t_0\in I'$, given by the initial condition $\dot{\sigma}(t_0)=\dot{\gamma}(t_0)$. Since $\dot{\sigma}$ is $\nabla^N$-parallel, it is, by Lemma \ref{lem: parallel}, $\nabla$-parallel, that is, $\sigma$ is also $\nabla$-geodesic. It follows from the uniqueness of geodesisc that there is an open interval $I_0\subseteq I\cap I'$ containing $t_0$ such that $\rest{\gamma}{I_0}=\rest{\sigma}{I_0}$, hence $N$ is a totally $\nabla$-geodesic submanifold.

By Proposition \ref{prop: symbi-rest}, we have that $\vartheta_N\in\mathfrak{X}^2_\text{sym}(N)$ is non-degenerate. For every $\alpha,\beta\in\Omega^1(N)$ and $m\in N$, we find
 \begin{align*}
 (\nabla^N_{\vartheta_N(\alpha_m)}\vartheta_N)(\beta,\beta)&=\vartheta_N(\alpha_m)\vartheta_N(\beta,\beta)-2\vartheta_N(\nabla^N_{\vartheta_N(\alpha_m)}\beta,\beta_m)\\
 &=-\vartheta_N(\alpha_m)\vartheta_N(\beta,\beta)+2\beta(\nabla^N_{\vartheta_N(\alpha_m)}\vartheta_N(\beta)).
 \end{align*}
 For an arbitrary extension $\hat{\beta}\in\Omega^1(M)$ of $\beta$ around $m$, the function $\vartheta(\hat{\beta},\hat{\beta})\in\cCi(M)$ is an extension of $\vartheta_N(\beta,\beta)\in\cCi(N)$ around $m$, hence
 \begin{equation}\label{eq: leaf-metric-sPs}
 \begin{split}
 (\nabla^N_{\vartheta_N(\alpha_m)}\vartheta_N)(\beta,\beta)&=-\vartheta(\alpha_m)\vartheta(\hat{\beta},\hat{\beta})+2\hat{\beta}(\nabla_{\vartheta(\alpha_m)}\vartheta(\hat{\beta}))\\
 &=(\nabla_{\vartheta(\alpha_m)}\vartheta)(\hat{\beta},\hat{\beta}).
 \end{split}
 \end{equation}
 By Proposition \ref{prop: sym-Poisson-bracket}, we get 
 \begin{equation*}
 0=(\nabla^N_{\vartheta_N(\alpha)}\vartheta_N)(\alpha,\alpha)=\frac{1}{6}[\vartheta_N,\vartheta_N]_s(\alpha,\alpha,\alpha)
 \end{equation*}
 and thus, by polarization, $(\vartheta_N,\nabla^N)$ is a symmetric Poisson structure on $N$. Analogously, if $(\vartheta,\nabla)$ is strong, it follows from polarization and \eqref{eq: leaf-metric-sPs} that
 \begin{equation*}
 \nabla^N_{\vartheta_N(\alpha)}\vartheta_N=0,
 \end{equation*}
 that is, by Corollary \ref{cor: ssPs-integrability}, the pair $(\vartheta_N,\nabla^N)$ is a strong symmetric Poisson structure on $N$. Therefore, by Proposition \ref{prop: ssPs-nondeg}, we have that $\nabla^N$ is indeed the Levi-Civita connection of $g_N=\vartheta_N^{-1}$.
\end{proof}

\section{Patterson-Walker dynamics and applications}\label{sec: PW} 
Every torsion-free connection on $M$ determines a split-signature metric on $T^*M$, the \textit{Patterson-Walker metric}, first introduced in \cite{PatRE}. We briefly recall its modern definition. First, a torsion-free connection $\nabla$ on $M$ induces a connection on the cotangent bundle $\pr:T^*M\rightarrow M$, which gives the decomposition 
\begin{equation*}
T(T^*M)=\mathcal{H}_\nabla\oplus \mathcal{V}\cong \pr^*TM\oplus \pr^*T^*M = \pr^*(TM\oplus T^*M).
\end{equation*}
Moreover, we get a $\cCi(M)$-module morphism
\begin{equation*}
\phi_\nabla:\Gamma(TM\oplus T^*M)\rightarrow \mathfrak{X}(T^*M), 
\end{equation*}
whose image locally generates the space $\mathfrak{X}(T^*M)$. Therefore, we can bring the canonical symmetric pairing, defined for $X+\al,Y+\be \in \Gamma(TM\oplus T^*M)$ as 
\begin{equation*}
 \langle X+\alpha, Y+\beta\rangle_+:=\alpha(Y)+\beta(X),
\end{equation*}
from the bundle $TM\oplus T^*M$ to the tangent bundle of $T^*M$.

\begin{definition} \label{def:PW-metric} Let $\nabla$ be a torsion-free connection on $M$. \textbf{The Patterson-Walker metric} $g_\nabla$ is the split-signature metric on $T^*M$ determined by
 \begin{equation*}
 g_\nabla(\phi_\nabla a,\phi_\nabla b):=\pr^*\langle a,b\rangle_+.
\end{equation*}
\end{definition}

See \cite{SymCartan} for a discussion of basic properties of the Patteson-Walker metric and its relation to symmetric Cartan calculus.

\begin{remark}
 In natural local coordinates $(T^*U,\{ x^i\}\cup\{ p_j\})$ on $T^*M$, the Patterson-Walker metric reads:
\begin{equation}\label{eq: PW-coordinates}
 \rest{g_\nabla}{T^*U}=\dif p_j\odot\dif x^j-p_k(\pr^*\Gamma^k_{ij})\,\dif x^i\odot\dif x^j,
\end{equation}
where $\{\Gamma^k_{ij}\}\subseteq\cCi(U)$ are the Christoffel symbols of $\nabla$ in the chart $(U,\{ x^i\})$. 
Formula \eqref{eq: PW-coordinates} recovers the original definition in \cite{PatRE} and also
shows the similarity between $g_\nabla$ and the canonical symplectic form $\omega_\text{can}\in\Omega^2(T^*M)$,
\begin{equation*}
 \rest{\omega_\text{can}}{T^*U}=\dif p_i\wedge\dif x^i.
\end{equation*}
For more details about this analogy see \cite{SymCartan}.
\end{remark}

\subsection{The Patterson-Walker bracket} 
By Proposition \ref{prop: ssPs-nondeg}, every torsion-free connection on $M$ defines, through the Patterson-Walker metric, the non-degenerate strong symmetric Poisson structure $(g^{-1}_\nabla,\bar{\nabla})$ on $T^*M$, where $\bar{\nabla}$ denotes the \mbox{Levi-Civ}ita connection of $g_\nabla$. In a natural coordinate chart, the corresponding symmetric Poisson bracket $\lbrace\,,\rbrace_\nabla$ takes the form, for $F$, $G\in\cCi(T^*M)$:
\begin{equation}\label{eq: PW-bracket}
\rest{\po{F,G}_\nabla}{T^*U}=\frac{\partial F}{\partial x^i}\frac{\partial G}{\partial p_i}+\frac{\partial F}{\partial p_i}\frac{\partial G}{\partial x^i}+2p_k\,(\pr^*\Gamma^k_{ij})\,\frac{\partial F}{\partial p_i}\frac{\partial G}{\partial p_j}.
\end{equation}

We show that there is a relation between the Patterson-Walker metric and the symmetric Schouten bracket. First, we introduce the notion of \textit{polynomial in momenta}, which is analogous to that of \textit{polynomial in velocities}, \cite{SymCartan}.

\begin{definition}\label{polvel}
 A \textit{homogeneous function} $F\in\cCi(T^*M)$ \textit{of degree $r$}, that is, for every $\lambda\in\R$ and $\zeta\in T^*M$,
 \begin{equation*}
 F(\lambda\,\zeta)=\lambda^r F(\zeta),
 \end{equation*}
 is called a \textbf{degree}-$r$ \textbf{polynomial in momenta} on $M$. 
\end{definition}

When we say that $F\in\cCi(T^*M)$ is a \textbf{polynomial in momenta}, we mean that it is an element of the direct sum of the spaces of degree-$r$ polynomials in momenta for $r\in \N$. This direct sum is a unital subalgebra of $\cCi(T^*M)$ that is graded and we denote it by
 \begin{equation*}
 \mathcal{P}ol(T^*M).
 \end{equation*}
We explain the notation by the next lemma.

 \begin{lemma}\label{lem: polynomials}
 Let $F\in\cCi(T^*M)$ be a degree-$r$ polynomial in momenta on $M$. If $r=0$, we have that $F=\pr^*f$ for a unique $f\in\cCi(M)$. Whereas, if $r>0$ and a natural coordinate chart $(T^*U,\{ x^i\}\cup\{ p_j\})$ is given, we have that $\rest{F}{T^*U}=(\pr^*F^{j_1\ldots j_r})\,p_{j_1}\ldots p_{j_r}$ for a unique set of functions $\{ F^{j_1\ldots j_r}\}\subseteq \cCi(U)$. 
 \end{lemma}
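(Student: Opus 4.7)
The plan is to reduce the statement to a local-coordinate question about smooth functions on $U\times \R^n$ that are homogeneous of degree $r$ in the second variable, and then use a Taylor expansion along the fibre of $\pr: T^*M\to M$.

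First, I would handle the case $r=0$ directly. Homogeneity $F(\lambda\zeta)=F(\zeta)$ for all $\lambda\in\R$ means $F$ is constant on every fibre $\pr^{-1}(m)$. Setting $f(m):=F(0_m)$ gives a function on $M$ with $F=\pr^*f$; smoothness of $f$ follows from smoothness of $F$ and of the zero section, and uniqueness is immediate from surjectivity of $\pr$.

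For $r>0$, fix a natural chart $(T^*U,\{x^i\}\cup\{p_j\})$ and regard $F$ as a smooth function on $U\times\R^n$ that is homogeneous of degree $r$ in $p$. The idea is to show that all partial derivatives $\partial^I F/\partial p^I|_{p=0}$ vanish unless $|I|=r$. Differentiating the identity $F(x,\lambda p)=\lambda^r F(x,p)$ with respect to $p$ in the multi-index $I$ gives
\begin{equation*}
\lambda^{|I|}\,\frac{\partial^{|I|}F}{\partial p^I}(x,\lambda p)=\lambda^{r}\,\frac{\partial^{|I|}F}{\partial p^I}(x,p),
\end{equation*}
and evaluating at $p=0$ yields $(\lambda^{|I|}-\lambda^{r})\,\partial^{|I|}F/\partial p^I(x,0)=0$ for every $\lambda\in\R$. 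Hence all derivatives of order $|I|\neq r$ vanish at $p=0$, and the coefficients $F^{j_1\ldots j_r}(x):=\frac{1}{r!}\partial^{r}F/\partial p_{j_1}\cdots\partial p_{j_r}(x,0)\in\cCi(U)$ give the only candidate for the expansion.

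Finally I would show that $F$ actually equals $G:=(\pr^*F^{j_1\ldots j_r})\,p_{j_1}\cdots p_{j_r}$. Both $F$ and $G$ are homogeneous of degree $r$ and, by construction, have the same Taylor jet at $p=0$ up to order $r$; hence $H:=F-G$ is smooth, homogeneous of degree $r$, and satisfies $H(x,p)=O(|p|^{r+1})$ as $p\to 0$. Applying the homogeneity identity $H(x,\lambda p)=\lambda^{r}H(x,p)$ and the order-$(r+1)$ bound on the left-hand side, we obtain $|H(x,p)|\leq C\lambda|p|^{r+1}$ for $\lambda>0$ small, and letting $\lambda\to 0^+$ forces $H\equiv 0$. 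Uniqueness of the symmetric coefficients $F^{j_1\ldots j_r}$ then follows from the identification with the symmetrized partials above. The main subtlety is the vanishing of $H$, which relies on combining homogeneity with a Hadamard-type estimate; once this is observed, the rest is a bookkeeping argument on Taylor coefficients.
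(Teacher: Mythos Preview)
Your argument is correct. The paper does not actually prove this lemma; it just says ``The proof is analogous to \cite[Lem.~2.12]{SymCartan}'', so there is no in-paper proof to compare against.

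One minor remark: your final step can be shortened. Instead of subtracting the candidate polynomial $G$ and using a remainder estimate, you can differentiate the identity $F(x,\lambda p)=\lambda^{r}F(x,p)$ exactly $r$ times in $\lambda$ and set $\lambda=0$. By the chain rule this yields
\[
p_{j_1}\cdots p_{j_r}\,\frac{\partial^{r}F}{\partial p_{j_1}\cdots\partial p_{j_r}}(x,0)=r!\,F(x,p),
\]
which gives $F=G$ directly and avoids the Hadamard-type bound. Your approach is perfectly valid, just slightly longer than necessary.
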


 \begin{proof}
 The proof is analogous to \cite[Lem. 2.12]{SymCartan}.
 \end{proof}

We introduce the \textbf{vertical lift of a symmetric multivector field}, it is the map $(\,\,)^v:\symall\rightarrow \mathcal{P}ol(T^*M)$ given, for $\zeta\in T^*M$, by
\begin{align*}
\mathcal{X}^v(\zeta)&:=\frac{1}{r!}\,\mathcal{X}(\zeta\varlist \zeta) & &\text{if $r>0$},\\
f^v&:=\pr^*f & &\text{if $r=0$},
\end{align*}
which is a natural extension of the \textit{vertical lift of a vector field}, see \cite[App. B]{SymCartan}. It follows from Lemma \ref{lem: polynomials} and polarization that the map $(\,\,)^v$ is an isomorphism of unital graded algebras
\begin{equation}\label{eq: symall-polynomials}
 (\symall,\odot)\overset{\sim}{\longleftrightarrow} \mathcal{P}ol(T^*M)
\end{equation}
where the pointwise product of functions is considered on $\mathcal{P}ol(T^*M)$. The isomorphism \eqref{eq: symall-polynomials} recovers a well-known interpretation of symmetric multivector fields. We see that we can upgrade this isomorphism by considering the symmetric Schouten bracket and the symmetric Poisson bracket $\{\, ,\, \}_\nabla$.

\begin{proposition}\label{prop: sym-Schouten-PW}
Let $\nabla$ be a torsion-free connection on $M$. The vertical lift of a symmetric multivector field is an algebra isomorphism between the commutative algebras $(\mathfrak{X}^\bullet_\emph{sym}(M),[\,\,,\,]_s)$ and $(\mathcal{P}ol(T^*M),\lbrace\,,\rbrace_\nabla)$.
\end{proposition}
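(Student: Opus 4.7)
The plan is to leverage the fact that the vertical lift is already known to be an isomorphism of commutative unital graded algebras $(\symall,\odot)\cong(\mathcal{P}ol(T^*M),\cdot)$, so the statement reduces to checking the bracket identity $([\mathcal{X},\mathcal{Y}]_s)^v=\{\mathcal{X}^v,\mathcal{Y}^v\}_\nabla$ on generators, provided both sides are biderivations of the respective commutative products in each entry.

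First I would verify the derivation property on the right-hand side. The symmetric Schouten bracket is defined as a biderivation of $\odot$ (Definition \ref{def: s-Schouten}). On the Patterson-Walker side, since $(g_\nabla^{-1},\bar\nabla)$ is a \emph{strong} symmetric Poisson structure (Proposition \ref{prop: ssPs-nondeg} applied to the non-degenerate metric $g_\nabla$), the associated symmetric Poisson bracket $\{\,,\,\}_\nabla$ automatically satisfies the Leibniz rule in each entry. Consequently, the identity $([\mathcal{X},\mathcal{Y}]_s)^v=\{\mathcal{X}^v,\mathcal{Y}^v\}_\nabla$ needs only to be checked for $\mathcal{X},\mathcal{Y}$ drawn from a set of local generators of $\symall$ over $\cCi(M)$, namely functions and vector fields.

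Second, I would check the three generating cases in a natural local chart $(T^*U,\{x^i\}\cup\{p_j\})$, using the coordinate expression \eqref{eq: PW-bracket}. For $f,g\in\cCi(M)$, we have $[f,g]_s=0$ by the degree axiom, and $\{\pr^*f,\pr^*g\}_\nabla=0$ since both factors are $p$-independent. For $X\in\mathfrak{X}(M)$ and $f\in\cCi(M)$, one has $X^v=(\pr^*X^i)p_i$ and $f^v=\pr^*f$; formula \eqref{eq: PW-bracket} collapses to
\begin{equation*}
\{X^v,f^v\}_\nabla=\frac{\partial(X^ip_i)}{\partial p_j}\,\partial_{x^j}(\pr^*f)=\pr^*(Xf)=([X,f]_s)^v,
\end{equation*}
matching $[X,f]_s=Xf$. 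For $X,Y\in\mathfrak{X}(M)$, a direct computation using \eqref{eq: PW-bracket} yields
\begin{equation*}
\{X^v,Y^v\}_\nabla=\bigl(X^l\partial_{x^l}Y^k+Y^l\partial_{x^l}X^k+2\Gamma^k_{lm}X^lY^m\bigr)p_k,
\end{equation*}
which is exactly the vertical lift of $\nabla_XY+\nabla_YX=[X,Y]_s$, using the symmetry of the Christoffel symbols $\Gamma^k_{ij}$ of the torsion-free $\nabla$.

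Third, I would extend the identity to all of $\symall$. Given arbitrary $\mathcal{X},\mathcal{Y}\in\symall$, write them locally as $\cCi(M)$-linear combinations of symmetric products of vector fields and use the biderivation property of both brackets with respect to the commutative products matched by $(\,)^v$: the identity on generators and the biderivation property force equality on all decomposable elements via \eqref{eq: sym-Schouten-decomposable}, and hence globally by $\R$-bilinearity. The main obstacle is essentially bookkeeping: verifying that $\{\,,\,\}_\nabla$ restricts to $\mathcal{P}ol(T^*M)$ and respects the grading (a degree count on \eqref{eq: PW-bracket} shows $\{F,G\}_\nabla$ is a polynomial of degree $r+s-1$ when $F,G$ have degrees $r,s$), so that $(\mathcal{P}ol(T^*M),\odot,\{\,,\,\}_\nabla)$ is indeed a well-defined commutative bracket algebra on which $(\,)^v$ lands.
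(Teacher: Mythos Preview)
Your proposal is correct and follows essentially the same approach as the paper: reduce to generators via the biderivation property of both brackets with respect to the commutative products identified by $(\,)^v$, then verify the three cases $\{f^v,\cg^v\}_\nabla$, $\{f^v,X^v\}_\nabla$, $\{X^v,Y^v\}_\nabla$ in natural coordinates using \eqref{eq: PW-bracket}. Your additional remark that $\{\,,\,\}_\nabla$ preserves $\mathcal{P}ol(T^*M)$ with the expected degree shift is a small but welcome completeness check that the paper leaves implicit.
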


\begin{proof}
 The relation \eqref{eq: symall-polynomials} gives that the vertical lift is a vector space isomorphism so it remains to show that it also intertwines the brackets. As, again by \eqref{eq: symall-polynomials}, the vertical lift is compatible with $\odot$ and the pointwise product on $\cCi(T^*M)$, and moreover, both $[\,\,,\,]_s$ and $\lbrace\,,\rbrace_\nabla$ are commutative and posses the derivation properties (see Definition \ref{def: s-Schouten} and Definition \ref{def: ssPs}), it is enough to check that
 \begin{align}\label{eq:three-eq-prop-alg-morphism}
 [f,\cg]_s^v&=\po{f^v,\cg^v}_\nabla, & [f,X]_s^v&=\po{f^v,X^v}_\nabla, & [X,Y]_s^v&=\po{X^v,Y^v}_\nabla.
 \end{align}
Rewriting the left-hand sides by using the definition of $[\,\,,\,]_s$ yields
\begin{align*}
 0&=\po{f^v,\cg^v}_\nabla, & (Xf)^v&=\po{f^v,X^v}_\nabla, & (\nabla_XY+\nabla_YX)^v&=\po{X^v,Y^v}_\nabla.
\end{align*}
For the right-hand sides, we use local coordinates as in \eqref{eq: PW-bracket} and find
\begin{align*}
 \rest{\po{f^v,\cg^v}_\nabla}{T^*U}&=0,\\
 \rest{\po{f^v,X^v}_\nabla}{T^*U}&=\pr^*\Big(\frac{\partial f}{\partial x^i}X^i\Big)=\pr^*(Xf)=(Xf)^v,\\
 \rest{\po{X^v,Y^v}_\nabla}{T^*U}&=\pr^*\Big(\frac{\partial X^k}{\partial x^i}Y^i+X^i\frac{\partial Y^k}{\partial x^i}+2\Gamma^k_{ij} X^iY^j\Big)p_k=(\nabla_XY+\nabla_YX)^v,
\end{align*}
where $\rest{X}{U}=X^i\partial_{x^i}$ and $\rest{Y}{U}=Y^i\partial_{x^i}$, so \eqref{eq:three-eq-prop-alg-morphism} is satisfied and the result follows. 
\end{proof}

\subsection{Analogy with the canonical symplectic form}\label{sec: analog-can-2-form}

 Similarly as the symmetric Schouten bracket is related to the Patterson-Walker metric/bracket, the \textit{anti-commutative Schouten bracket} on $\symall$ is related to the \textit{canonical symplectic form/bracket} on $T^*M$.
 
 The \textbf{anti-commutative Schouten bracket}, introduced in \cite{SchSSB}, is the unique $\R$-bilinear map $[\,\,,\,]:\symall\times\symall\rightarrow\symall$ such that
 \begin{itemize}
 \item $[X,f]=Xf$ and $[X,Y]=X\circ Y-Y\circ X$ for $X,Y\in\mathfrak{X}(M)$,
 \item $[\mathcal{X},\,\,]$ is a degree-$(r-1)$ derivation of $(\symall,\odot)$ for $\mathcal{X}\in\symr$,
 \item $[\mathcal{X},\mathcal{Y}]=-[\mathcal{Y},\mathcal{X}]$.
 \end{itemize}
Comparing it with the symmetric Schouten bracket (Definition \ref{def: s-Schouten}), we see that the only differences are that the symmetric bracket is replaced with the Lie bracket on vector fields and the commutativity of $[\,\,,\,]_s$ with the anti-commutativity of $[\,\,,\,]$.

On the other hand, the canonical Poisson bracket on $\cCi(T^*M)$ is defined in terms of the canonical symplectic form by
\begin{equation*}
 \po{F,G}_\text{can}=-\omega_\text{can}(\Ham F,\Ham G).
\end{equation*}
In natural coordinates, it reads as
\begin{equation*}
 \rest{\po{F,G}_\text{can}}{T^*U}=\frac{\partial F}{\partial x^i}\frac{\partial G}{\partial p_i}-\frac{\partial G}{\partial x^i}\frac{\partial F}{\partial p_i}.
\end{equation*}
By a straightforward calculation, we recover the fact that the vertical lift of a symmetric multivector field is a Lie algebra isomorphism between $(\symall,[\,\,,\,])$ and $(\mathcal{P}ol(T^*M), -\lbrace\,,\rbrace_\text{can})$ (see, e.g., \cite{Cartier:1994}).

\subsection{Patterson-Walker dynamics}
The analogy between the Patterson-Walker bracket and the canonical Poisson bracket raises a natural question:\newline \textit{Hamiltonian dynamics is driven by $\{\,,\}_\mathrm{can}$. What dynamics is driven by $\{\,,\}_{\nabla}$?}

By the choice of a torsion-free connection $\nabla$ on $M$ and a function $H\in\cCi(T^*M)$, we obtain the vector field on $T^*M$:
\begin{equation*}
 \grad_\nabla H:=g_\nabla^{-1}(\dif H)=\po{H,\,\,}_\nabla.
\end{equation*}
The arising dynamics, which we call \textbf{Patterson-Walker dynamics}, is the study of the integral curves of such vector fields. A crucial difference from Hamiltonian dynamics is, due to
\begin{equation*}
 (\grad_\nabla H)H=g_\nabla(\grad_\nabla H,\grad_\nabla H),
\end{equation*}
the fact that the `Hamiltonian function' $H$ is not always constant along the integral curves of $\grad_\nabla H$. Actually, the function $H$ is conserved if and only if $\grad_\nabla H$ is isotropic with respect to the Patterson-Walker metric $g_\nabla$.

In natural coordinates, from \eqref{eq: PW-bracket}, we have
\begin{equation}\label{eq: grad-PW}
 \rest{\grad_\nabla H}{T^*U}=\frac{\partial H}{\partial p_i}\partial_{x^i}+\left(\frac{\partial H}{\partial x^j}+2p_k(\pr^*\Gamma^k_{ij})\frac{\partial H}{\partial p_i}\right)\partial_{p_j}.
\end{equation}
Therefore, Patterson-Walker dynamics is locally governed by the system of ODEs for the unknown curve $a: I\rightarrow T^*M$:
\begin{align}\label{eq: EoM}
\dot{\gamma}_a^i&=\frac{\partial H}{\partial p_i}\circ a, & \dot{a}_j&=\frac{\partial H}{\partial x^j}\circ a+2a_k\,(\Gamma^k_{ij}\circ\gamma_a)\,\frac{\partial H}{\partial p_i}\circ a,
\end{align}
where we denote $\gamma_a:=\pr\circ a$, $a_j:=p_j\circ a$ and $\gamma_a^i:=x^i\circ\gamma_a=x^i\circ a$. This should be compared with Hamilton's equations (the integral curve equation for the vector field $-\Ham H$):
\begin{align*}
 \dot{\gamma}_a^i&=\frac{\partial H}{\partial p_i}\circ a, & \dot{a}_j&=-\frac{\partial H}{\partial x^j}\circ a.
\end{align*}
The first equation in \eqref{eq: grad-PW} thus coincides with one of Hamilton's equations, whereas the second one differs in general. On the other hand, there is a close relation between $\grad_\nabla H$ and $\Ham H$.

\begin{lemma}\label{lem: hor-ver}
Let $\nabla$ be a torsion-free connection on $M$, $H\in\cCi(T^*M)$, and denote the projections onto the vertical and horizontal subbundle by $\pr_\mathcal{V}$ and $\pr_{\mathcal{H}_\nabla}$ respectively. Then,
 \begin{equation*}
 \grad_\nabla H=\pr_\mathcal{V}\Ham H-\pr_{\mathcal{H}_\nabla}\Ham H.
 \end{equation*}
\end{lemma}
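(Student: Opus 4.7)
The plan is to prove the equivalent identity
\[
g_\nabla(\grad_\nabla H, Z) = g_\nabla(\pr_\mathcal{V}\Ham H - \pr_{\mathcal{H}_\nabla}\Ham H,\, Z)
\]
for every $Z \in \mathfrak{X}(T^*M)$ and then conclude by non-degeneracy of the Patterson-Walker metric. The left-hand side is $\dif H(Z)$ by the very definition $\grad_\nabla H = g_\nabla^{-1}(\dif H)$. A brief check in natural coordinates, using the expression for $\Ham H$ read off from the discussion of Hamilton's equations, yields $i_{\Ham H}\omega_\text{can} = \dif H$, so $\dif H(Z) = \omega_\text{can}(\Ham H, Z)$, and the target identity takes the form
\[
\omega_\text{can}(\Ham H, Z) = g_\nabla(\pr_\mathcal{V}\Ham H,\, Z) - g_\nabla(\pr_{\mathcal{H}_\nabla}\Ham H,\, Z).
\]

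The strategy then rests on the observation that both $g_\nabla$ and $\omega_\text{can}$ are purely cross-term with respect to the splitting $T(T^*M) = \mathcal{H}_\nabla \oplus \mathcal{V}$. For $g_\nabla$ this is immediate from Definition \ref{def:PW-metric}: for $X, Y \in \mathfrak{X}(M)$ and $\alpha, \beta \in \Omega^1(M)$, both $g_\nabla(\phi_\nabla X, \phi_\nabla Y)$ and $g_\nabla(\phi_\nabla \alpha, \phi_\nabla \beta)$ vanish, while the mixed pairing gives $g_\nabla(\phi_\nabla X, \phi_\nabla \beta) = \pr^*\beta(X)$. For $\omega_\text{can}$, vertical--vertical vanishing is the Lagrangianity of the fibres, whereas horizontal--horizontal vanishing is a one-line coordinate computation using $\omega_\text{can} = \dif p_i \wedge \dif x^i$ together with the symmetry $\Gamma^k_{ij} = \Gamma^k_{ji}$ of a torsion-free connection; the same computation produces $\omega_\text{can}(\phi_\nabla X, \phi_\nabla \beta) = -\pr^*\beta(X)$.

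With these bilinear identities in hand, I would decompose pointwise $\Ham H = \phi_\nabla(X_H) + \phi_\nabla(\alpha_H)$, with the two summands being $\pr_{\mathcal{H}_\nabla}\Ham H$ and $\pr_\mathcal{V}\Ham H$ respectively, and similarly $Z = \phi_\nabla(Y) + \phi_\nabla(\beta)$. Expanding both sides of the target identity using the cross-term formulas gives
\[
\omega_\text{can}(\Ham H, Z) = \pr^*\alpha_H(Y) - \pr^*\beta(X_H) = g_\nabla(\pr_\mathcal{V}\Ham H, Z) - g_\nabla(\pr_{\mathcal{H}_\nabla}\Ham H, Z),
\]
which closes the argument. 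The only genuinely non-formal step, and hence the main obstacle, is verifying the horizontal--horizontal vanishing of $\omega_\text{can}$, which is the unique place in the argument where the torsion-freeness of $\nabla$ is essential; everything else is direct manipulation of the defining bilinear pairings.
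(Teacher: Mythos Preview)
Your argument is correct. The paper's own proof is a single sentence: ``The result follows by a straightforward calculation in natural coordinates,'' i.e.\ one simply writes out both sides using \eqref{eq: grad-PW} and the coordinate form of $\Ham H$ and matches components. Your route is genuinely different and more structural: you observe that both $g_\nabla$ and $\omega_{\mathrm{can}}$ are block off-diagonal for the splitting $T(T^*M)=\mathcal{H}_\nabla\oplus\mathcal{V}$, with identical horizontal--vertical pairing and opposite vertical--horizontal pairing, so that the reflection $\pr_\mathcal{V}-\pr_{\mathcal{H}_\nabla}$ carries one musical isomorphism to the other. This explains \emph{why} the identity holds rather than merely verifying it, and it isolates exactly where torsion-freeness enters (the vanishing of $\omega_{\mathrm{can}}$ on $\mathcal{H}_\nabla\times\mathcal{H}_\nabla$, which in coordinates is $p_k(\Gamma^k_{ij}-\Gamma^k_{ji})=0$). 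The paper's approach is shorter to write down; yours is more illuminating and would transfer directly to any pair of non-degenerate forms sharing this block pattern.
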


\begin{proof}
 The result follows by a straightforward calculation in natural coordinates.
\end{proof}

Let us examine some particular examples to show what mathematical and physical systems can be described by Patterson-Walker dynamics.

\begin{example}[The parallel transport equation]
 Consider a torsion-free connection $\nabla$ on $M$ and $H\in\cCi(T^*M)$ such that $\grad_\nabla H$ is horizontal at each point. By Lemma \ref{lem: hor-ver}, we know that Patterson-Walker dynamics coincides with Hamiltonian dynamics. It follows from \eqref{eq: grad-PW} that locally we have
 \begin{equation*}
 \frac{\partial H}{\partial x^j}+p_k(\pr^*\Gamma^k_{ij})\frac{\partial H}{\partial p_i}=0,
 \end{equation*}
 hence the ODE system \eqref{eq: EoM} becomes
 \begin{align*}
\dot{\gamma}_a^i&=\frac{\partial H}{\partial p_i}\circ a, & \dot{a}_j&=a_k\,(\Gamma^k_{ij}\circ\gamma_a)\,\frac{\partial H}{\partial p_i}\circ a.
\end{align*}
Using the first equation on the second one gives
\begin{equation*}
\dot{a}_j-a_k\,(\Gamma^k_{ij}\circ\gamma_a)\,\dot{\gamma}_a^i=0,
\end{equation*}
which can be expressed globally by $\nabla_{\dot{\gamma}_a}a=0$, that is, the parallel transport equation along the Hamiltonian trajectory in the configuration space.
\end{example}

\begin{example}[Hamiltonians linear in momenta]\label{ex: lin-ham}
 By the correspondence \eqref{eq: symall-polynomials}, a linear polynomial in momenta $H\in\cCi(T^*M)$ is given by a unique vector field $X\in\mathfrak{X}(M)$ as $H=X^v$. In natural coordinates,
 \begin{align*}
 \rest{X}{U}&=X^i\partial_{x^i}, & \rest{H}{T^*U}&=(\pr^*X^i)p_i.
 \end{align*}
 The system \eqref{eq: EoM} takes the form:
 \begin{align*}
 \dot{\gamma}_a^i&=X^i\circ\gamma_a, & \dot{a}_j&=a_i\Big(\frac{\partial X^i}{\partial x^j}\circ\gamma_a\Big)+2 a_k((\Gamma^k_{ij}X^i
)\circ\gamma_a).
 \end{align*}
 The first equation is the integral curve equation for the vector field $X$, whereas the second equation, by employing the first one, can be written in global form as
 \begin{equation}\label{eq: Ham-lin}
 L^s_X a=0. 
 \end{equation}
 Since the symmetric Lie derivative can be interpreted in terms of the $\nabla^s$-parallel transport, see \cite[Sec. 2.5]{SymCartan}, the left-hand side of this equation is well defined as $a$ is a $1$-form along an integral curve of $X$. The equation \eqref{eq: Ham-lin} is equivalent to having, for every vector field $V$ along $\gamma_a$, the following:
 \begin{equation*}
\frac{\dif }{\dif t}a(V)=a(\pg{X,V}_s).
 \end{equation*}
 In particular, we get that $\frac{\dif}{\dif t}a(\dot{\gamma}_a)=2a(\nabla_{\dot{\gamma}_a}\dot{\gamma}_a)$, hence, if $X$ is autoparallel, that is, $\nabla_XX=0$, the function $a(\dot{\gamma}_a)$ is constant. 
\end{example}

The system we have recovered in Example \ref{ex: lin-ham} is called the \textit{gradient extension of a dynamical system}, see \cite{CorCGCS} for a relation to control theory.

\begin{remark}
Using the language of \cite{TaoUNI}, Example \ref{ex: lin-ham} shows that gradient vector fields are so-called \textit{universal}, that is, given a manifold $M$ and $X\in\mathfrak{X}(M)$, there is an embedding $\phi:M\rightarrow M'$ and a gradient vector field $Y\in\mathfrak{X}(M')$ such that $X$ and $Y$ are $\phi$-related.
\end{remark}

\begin{example}[Conservative systems]
The dynamics of a conservative system with $n$ degrees of freedom is described by the Newtonian equation:
\begin{equation}\label{eq: Newtonian eq}
 \lcn{g}_{\dot{\gamma}}\dot{\gamma}=-\grad_gf,
\end{equation}
where $g$ is a Riemannian metric on an $n$-dimensional manifold $M$ representing the kinetic energy and $f\in\cCi(M)$ represents the potential energy of the system. In Hamiltonian dynamics, equation \eqref{eq: Newtonian eq} is recovered for the Hamiltonian function $H\in\cCi(T^*M)$ given, for $\zeta\in T^*M$, by
\begin{equation*}
 H(\zeta):=\frac{1}{2}g^{-1}(\zeta,\zeta)+f(\pr(\zeta)),
\end{equation*}
or equivalently $H=(g^{-1}+f)^v$. If we consider $H_{PW}:=(g^{-1}-f)^v$ and choose the Levi-Civita connection of $g$, we obtain the Newtonian equation \eqref{eq: Newtonian eq} as arising from Patterson–Walker dynamics.
\end{example}

\subsection{Dynamical interpretation of symmetric Poisson structures} We finish this section with a dynamical interpretation of symmetric Poisson structures. This will allow us to extend the fact that the characteristic distribution is geodesically invariant from regular (Corollary \ref{cor: char-reg-sPs}) to singular structures, although only locally.

An immediate consequence of isomorphism \eqref{eq: symall-polynomials} and Proposition \ref{prop: sym-Schouten-PW} is a characterization of symmetric Poisson structures in terms of Patterson-Walker dynamics.

\begin{corollary}\label{cor: sPs-PW dynamics}
 Let $\vartheta\in\mathfrak{X}^2_\emph{sym}(M)$ and $\nabla$ be a torsion-free connection on $M$. The pair $(\vartheta,\nabla)$ is symmetric Poisson if and only if $\grad_\nabla\vartheta^v$ is isotropic with respect to $g_\nabla$, that is, $\vartheta^v\in\cCi(T^*M)$ is constant along the integral curves of $\grad_\nabla\tilde{\vartheta}$.
\end{corollary}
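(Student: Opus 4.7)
The plan is to read the statement as a direct consequence of Proposition~\ref{prop: sym-Schouten-PW}, with the only additional ingredient being the unpacking of what $\{\vartheta^v, \vartheta^v\}_\nabla$ means at the level of the gradient vector field.

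First, I would apply the algebra isomorphism of Proposition~\ref{prop: sym-Schouten-PW} to the element $\vartheta \in \mathfrak{X}^2_\text{sym}(M)$: it gives
\[
 [\vartheta, \vartheta]_s^v \;=\; \{\vartheta^v, \vartheta^v\}_\nabla,
\]
and since the vertical lift is injective, the condition $[\vartheta,\vartheta]_s = 0$ defining a symmetric Poisson structure is equivalent to $\{\vartheta^v, \vartheta^v\}_\nabla = 0$.

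Next, I would rewrite the right-hand side in terms of the Patterson-Walker metric. Since $(g_\nabla^{-1},\bar\nabla)$ is the non-degenerate strong symmetric Poisson structure associated with $g_\nabla$, the symmetric Poisson bracket satisfies $\{F,G\}_\nabla = g_\nabla^{-1}(\dif F, \dif G)$. In particular, by the very definition $\grad_\nabla H := g_\nabla^{-1}(\dif H) = \{H,\,\cdot\,\}_\nabla$, one obtains
\[
 \{\vartheta^v, \vartheta^v\}_\nabla \;=\; g_\nabla^{-1}(\dif \vartheta^v, \dif \vartheta^v) \;=\; g_\nabla(\grad_\nabla \vartheta^v, \grad_\nabla \vartheta^v),
\]
so the vanishing of this bracket is precisely the isotropy of $\grad_\nabla \vartheta^v$ with respect to $g_\nabla$. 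Finally, the same identity written as $\{\vartheta^v, \vartheta^v\}_\nabla = (\grad_\nabla \vartheta^v)(\vartheta^v)$ shows that vanishing is equivalent to $\vartheta^v$ being conserved along the integral curves of $\grad_\nabla \vartheta^v$, closing the chain of equivalences.

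There is essentially no obstacle beyond these bookkeeping identifications; the real work has already been done in Proposition~\ref{prop: sym-Schouten-PW}. The only point worth double-checking is that the sign conventions in the definitions of $\grad_\nabla$, of $\{\,,\}_\nabla$ and of isotropy for $g_\nabla$ all line up, so that the self-bracket of $\vartheta^v$ literally equals $g_\nabla(\grad_\nabla \vartheta^v, \grad_\nabla \vartheta^v)$ (and not, say, its negative). Once this is verified in natural coordinates using \eqref{eq: PW-bracket} and \eqref{eq: grad-PW}, the corollary follows immediately.
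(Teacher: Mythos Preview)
Your proof is correct and follows essentially the same approach as the paper, which states the corollary as an immediate consequence of the isomorphism \eqref{eq: symall-polynomials} and Proposition~\ref{prop: sym-Schouten-PW}. The paper had already recorded, in the paragraph introducing Patterson--Walker dynamics, the identity $(\grad_\nabla H)H = g_\nabla(\grad_\nabla H,\grad_\nabla H)$, which is exactly the bookkeeping step you spell out; your write-up simply makes the chain of identifications explicit.
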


By \eqref{eq: symall-polynomials}, symmetric bivector fields can be identified with quadratic polynomials in momenta. We explore the corresponding Patterson-Walker dynamics.

\begin{lemma}\label{lem: quadrtaic-PW-dynamics}
 Let $\vartheta\in\mathfrak{X}^2_\emph{sym}(M)$ and $\nabla$ be a torsion-free connection on $M$. A curve $a:I\rightarrow T^*M$ is an integral curve of $\grad_\nabla\vartheta^v$ if and only if
 \begin{align}\label{eq: PW-quadratic}
\dot{\gamma}_a&=\vartheta(a),& \nabla_{\dot{\gamma}_a}a&=\frac{1}{2}(\nabla\vartheta)(a,a).
 \end{align}
\end{lemma}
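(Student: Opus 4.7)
The approach is a direct calculation in a natural coordinate chart $(T^*U, \{x^i\} \cup \{p_j\})$ via the explicit formula \eqref{eq: grad-PW} applied to $H := \vartheta^v$. Writing $\vartheta = \tfrac{1}{2}\vartheta^{ij}\partial_{x^i}\odot\partial_{x^j}$, the vertical lift is $\vartheta^v = \tfrac{1}{2}(\pr^*\vartheta^{ij})p_ip_j$ by Definition of the vertical lift, and computing $\partial\vartheta^v/\partial p_i = (\pr^*\vartheta^{ij})p_j$ and $\partial\vartheta^v/\partial x^j = \tfrac{1}{2}(\pr^*\partial_{x^j}\vartheta^{kl})p_kp_l$ turns the integral-curve system \eqref{eq: EoM} into
\begin{align*}
 \dot\gamma_a^i &= (\vartheta^{ij}\circ\gamma_a)a_j, & \dot a_j &= \tfrac{1}{2}(\partial_{x^j}\vartheta^{kl}\circ\gamma_a)a_ka_l + 2a_k(\Gamma^k_{ij}\circ\gamma_a)(\vartheta^{il}\circ\gamma_a)a_l.
\end{align*}
The first equation is manifestly the coordinate expression of $\dot\gamma_a = \vartheta(a)$.

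To match the second equation, I would translate the target identity $\nabla_{\dot\gamma_a}a = \tfrac{1}{2}(\nabla\vartheta)(a,a)$ into the same coordinates. The induced dual connection gives $(\nabla_{\dot\gamma_a}a)_j = \dot a_j - \Gamma^k_{ij}\dot\gamma_a^i a_k$, while the $(2,1)$-tensor $\nabla\vartheta$ has components
\[
 (\nabla_{\partial_{x^j}}\vartheta)^{kl} = \partial_{x^j}\vartheta^{kl} + \Gamma^k_{jm}\vartheta^{ml} + \Gamma^l_{jm}\vartheta^{km},
\]
so, using the symmetry of $\vartheta$ in the upper indices,
\[
 \tfrac{1}{2}(\nabla\vartheta)(a,a)_j = \tfrac{1}{2}(\partial_{x^j}\vartheta^{kl})a_ka_l + \Gamma^k_{jm}\vartheta^{ml}a_ka_l.
\]
Substituting $\dot\gamma_a^i = \vartheta^{il}a_l$ into the Christoffel term of $\nabla_{\dot\gamma_a}a$ and invoking the torsion-freeness $\Gamma^k_{ij} = \Gamma^k_{ji}$ to reorder summed indices shows that the two Christoffel contributions combine to match the coefficient $2a_k\Gamma^k_{ij}\vartheta^{il}a_l$ in the ODE, completing the equivalence.

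The proof thus amounts to an index bookkeeping exercise with torsion-freeness as its one essential input; there is no conceptual obstacle. A more structural alternative, which I would not pursue because the coordinate argument is quicker, would be to test the integral-curve condition $\frac{\dif}{\dif t}(F\circ a) = \{\vartheta^v, F\}_\nabla\circ a$ against $F = \pr^* f$ and $F = Y^v$, using Proposition \ref{prop: sym-Schouten-PW} to rewrite the Patterson-Walker bracket as the vertical lift of a symmetric Schouten bracket, and then unwinding $[\vartheta, f]_s = \vartheta(\dif f)$ together with $[\vartheta, Y]_s$ via Proposition \ref{prop: s-Schouten-explicit}.
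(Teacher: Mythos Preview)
Your proof is correct and follows essentially the same approach as the paper: a direct computation in natural coordinates of the integral-curve system \eqref{eq: EoM} for $H=\vartheta^v$, identification of the first equation with $\dot\gamma_a=\vartheta(a)$, and substitution of this into the second equation to obtain the coordinate form of $\nabla_{\dot\gamma_a}a=\tfrac{1}{2}(\nabla\vartheta)(a,a)$. Your write-up is in fact slightly more explicit than the paper's, spelling out the components of $\nabla\vartheta$ and the role of torsion-freeness in matching the Christoffel terms, which the paper leaves implicit.
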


\begin{proof}
 In natural coordinates, we have that
\begin{align*}
 \rest{\vartheta}{U}&=\frac{1}{2}\vartheta^{ij}\partial_{x^i}\odot\partial_{x^j}, & &\rest{\vartheta^v}{T^*U}=\frac{1}{2}(\pr^*\vartheta^{ij})p_ip_j.
\end{align*}
The integral curves of $\grad_\nabla\vartheta^v$ are determined by the system of ODEs \eqref{eq: EoM}, which in this case takes the form:
 \begin{align*}
 \dot{\gamma}^i_a&=a_j(\vartheta^{ji}\circ\gamma_a), & \dot{a}_j&=a_ia_k\Big(\frac{1}{2}\frac{\partial \vartheta^{ik}}{\partial x^j}+2\Gamma^i_{lj}\vartheta^{lk}\Big)\circ\gamma_a.
 \end{align*}
 The first equation can be expressed globally, simply by $\dot{\gamma}_a=\vartheta(a)$. On the other hand, by using the first equation on the second one, we get
 \begin{equation*}
 \dot{a}_j-(\Gamma^i_{lj}\circ\gamma_a)a_i\dot{\gamma}_a^l=a_ia_k\Big(\frac{1}{2}\frac{\partial \vartheta^{ik}}{\partial x^j}+\Gamma^i_{lj}\vartheta^{lk}\Big)\circ\gamma_a,
 \end{equation*}
 which can be expressed in coordinate-free way by $\nabla_{\dot{\gamma}_a}a=\frac{1}{2}(\nabla\vartheta)(a,a)$.
\end{proof}

\begin{remark}
For a given Poisson structure $\pi$ on $M$, there is the notion of a \textit{cotangent path}, that is, a curve $a:I\rightarrow T^*M$ such that
\begin{equation*}
 \pi(a)=\dot{\gamma}_a.
\end{equation*}
The first equation in \eqref{eq: PW-quadratic} basically tells us $a$ is a ‘cotangent path’ for $\vartheta$. This is a notion related to that of $\vartheta$-admissible curve (Definition \ref{def:theta-admissible}): given a cotangent path $a$ for $\vartheta$, the curve $\gamma_a$ on $M$ is $\vartheta$-admissible.
\end{remark}

We show that there is a necessary condition to be satisfied in order to be an integral curve of $\grad_\nabla\vartheta^v$ and it is related to the symmetric Schouten bracket.

\begin{lemma}\label{lem: quadratic-PW-necessary}
 Consider $\vartheta\in\mathfrak{X}^2_\emph{sym}(M)$, a torsion-free connection $\nabla$ on $M$, and an integral curve $a:I\rightarrow T^*M$ of $\grad_\nabla\vartheta^v$. Then,
 \begin{equation*}
 \nabla_{\dot{\gamma}_a}\dot{\gamma}_a=\frac{1}{4}\iota_a\iota_a[\vartheta,\vartheta]_s.
 \end{equation*}
\end{lemma}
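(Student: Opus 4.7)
The plan is to combine the two equations from Lemma~\ref{lem: quadrtaic-PW-dynamics} with the explicit formula for $[\vartheta,\vartheta]_s$ coming from Proposition~\ref{prop: sym-Poisson-bracket}. The starting point is to differentiate $\dot{\gamma}_a=\vartheta(a)$ along the curve by means of the Leibniz rule for the induced connection:
\begin{equation*}
\nabla_{\dot{\gamma}_a}\dot{\gamma}_a=\nabla_{\dot{\gamma}_a}\vartheta(a)=(\nabla_{\dot{\gamma}_a}\vartheta)(a)+\vartheta(\nabla_{\dot{\gamma}_a}a).
\end{equation*}
Using $\dot{\gamma}_a=\vartheta(a)$ in the first term and the second equation in \eqref{eq: PW-quadratic} in the second, the right-hand side becomes
\begin{equation*}
(\nabla_{\vartheta(a)}\vartheta)(a)+\tfrac{1}{2}\vartheta((\nabla\vartheta)(a,a)),
\end{equation*}
where in the second term $(\nabla\vartheta)(a,a)$ is regarded as the $1$-form $X\mapsto(\nabla_X\vartheta)(a,a)$ and $\vartheta$ is applied as the map $T^*M\to TM$.

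Next, I would unwrap $\iota_a\iota_a[\vartheta,\vartheta]_s$ using the identity of Proposition~\ref{prop: sym-Poisson-bracket}. Testing against an arbitrary $\eta\in T^*_{\gamma_a(t)}M$ and specializing $\alpha=\beta=a$, $\eta=\eta$ in \eqref{eq:theta-theta-in-terms-of-nabla}, and exploiting the symmetry of $\vartheta$, yields
\begin{equation*}
\tfrac{1}{2}\,[\vartheta,\vartheta]_s(a,a,\eta)=2(\nabla_{\vartheta(a)}\vartheta)(a,\eta)+(\nabla_{\vartheta(\eta)}\vartheta)(a,a).
\end{equation*}
The first summand equals $2\eta((\nabla_{\vartheta(a)}\vartheta)(a))$, while the second rewrites, using the duality above, as $\eta(\vartheta((\nabla\vartheta)(a,a)))$. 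Since $\eta$ is arbitrary, this gives
\begin{equation*}
\tfrac{1}{4}\iota_a\iota_a[\vartheta,\vartheta]_s=(\nabla_{\vartheta(a)}\vartheta)(a)+\tfrac{1}{2}\vartheta((\nabla\vartheta)(a,a)),
\end{equation*}
which matches the expression obtained for $\nabla_{\dot{\gamma}_a}\dot{\gamma}_a$ above, completing the proof.

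The only subtlety I anticipate is purely notational: keeping straight whether $(\nabla\vartheta)(a,a)$ is being used as a scalar (after contracting with a third slot) or as a $1$-form (with the remaining covariant slot free), and correspondingly which contractions produce vectors along $\gamma_a$. Once that bookkeeping is fixed, both computations reduce to a direct application of $\nabla$ as a derivation and the trivial symmetrization in Proposition~\ref{prop: sym-Poisson-bracket}; alternatively, one could verify the identity in the natural chart directly from \eqref{eq: EoM} applied to $H=\vartheta^v$, but the coordinate-free route via Proposition~\ref{prop: sym-Poisson-bracket} is cleaner.
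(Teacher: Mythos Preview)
Your proof is correct and follows essentially the same route as the paper: differentiate $\dot{\gamma}_a=\vartheta(a)$ via the Leibniz rule, substitute the second equation of Lemma~\ref{lem: quadrtaic-PW-dynamics}, and identify the result with $\tfrac{1}{4}\iota_a\iota_a[\vartheta,\vartheta]_s$ via Proposition~\ref{prop: sym-Poisson-bracket}. Your version is in fact slightly more explicit than the paper's (which simply writes $(\nabla_{\vartheta(\,)}\vartheta)(a,a)$ for your $\vartheta((\nabla\vartheta)(a,a))$ and invokes Proposition~\ref{prop: sym-Poisson-bracket} without spelling out the pairing with $\eta$), and your anticipated notational subtlety is exactly the one at play.
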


\begin{proof}
 By Lemma \ref{lem: quadrtaic-PW-dynamics}, we have that $\dot{\gamma}_a=\vartheta(a)$, hence
 \begin{equation*}
 \nabla_{\dot{\gamma}_a}\dot{\gamma}_a=\nabla_{\vartheta(a)}\vartheta(a)=(\nabla_{\vartheta(a)}\vartheta)(a)+\vartheta(\nabla_{\vartheta(a)}a).
 \end{equation*}
 Employing the second equation in \eqref{eq: PW-quadratic} yields $\nabla_{\dot{\gamma}_a}\dot{\gamma}_a=(\nabla_{\vartheta(a)}\vartheta)(a)+\frac{1}{2}(\nabla_{\vartheta(\,)}\vartheta)(a,a)$. By Proposition \ref{prop: sym-Poisson-bracket}, we finally obtain
 \begin{equation*}
 \nabla_{\dot{\gamma}_a}\dot{\gamma}_a=\frac{1}{4}\iota_a\iota_a[\vartheta,\vartheta]_s.
 \end{equation*}
\end{proof}

We use Lemma \ref{lem: quadratic-PW-necessary} to find another characterization of symmetric Poisson structures, using the notion of a geodesic.

\begin{proposition}\label{prop: quadratic-PW}
 Let $\vartheta\in\mathfrak{X}^2_\emph{sym}(M)$ and $\nabla$ be a torsion-free connection on $M$. The pair $(\vartheta,\nabla)$ is a symmetric Poisson structure if and only if the curve $\gamma_a$ is a geodesic for every integral curve $a$ of $\grad_\nabla\vartheta^v$.
\end{proposition}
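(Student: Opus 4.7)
The plan is to read both directions off Lemma \ref{lem: quadratic-PW-necessary}, which, along any integral curve $a:I\to T^*M$ of $\grad_\nabla\vartheta^v$, expresses the geodesic defect of $\gamma_a=\pr\circ a$ as
\begin{equation*}
\nabla_{\dot{\gamma}_a}\dot{\gamma}_a=\tfrac{1}{4}\iota_a\iota_a[\vartheta,\vartheta]_s.
\end{equation*}
Thus the proposition reduces to the equivalence between the vanishing of $[\vartheta,\vartheta]_s$ and the vanishing of this right-hand side along all such curves, so essentially all the geometric content is already packaged by the lemma.

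For the forward implication, if $(\vartheta,\nabla)$ is a symmetric Poisson structure, then $[\vartheta,\vartheta]_s=0$ by definition and the displayed identity gives $\nabla_{\dot\gamma_a}\dot\gamma_a=0$ for every integral curve $a$ of $\grad_\nabla\vartheta^v$, so $\gamma_a$ is a geodesic.

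For the converse, I would fix $m\in M$ and an arbitrary $\zeta\in T^*_mM$, and use standard ODE theory applied to the smooth vector field $\grad_\nabla\vartheta^v$ on $T^*M$ to obtain an integral curve $a:I\to T^*M$ with $a(0)=\zeta$. The hypothesis says $\gamma_a$ is a geodesic, so $\nabla_{\dot\gamma_a}\dot\gamma_a\equiv 0$; evaluating at $t=0$ and using Lemma \ref{lem: quadratic-PW-necessary} gives
\begin{equation*}
\iota_\zeta\iota_\zeta[\vartheta,\vartheta]_s=0\quad\text{in }T_mM.
\end{equation*}
Since $m$ and $\zeta$ were arbitrary, this identity holds on all of $T^*M$.

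The final step is polarization: as $[\vartheta,\vartheta]_s\in\symr$ with $r=3$ is a totally symmetric trivector field, the vanishing of $[\vartheta,\vartheta]_s(\zeta,\zeta,\eta)$ for every $\zeta,\eta\in T^*_mM$ forces $[\vartheta,\vartheta]_s=0$ pointwise, hence globally. I do not expect a real obstacle: the analytical input (existence of an integral curve through any prescribed point) is immediate from smoothness, and the geometric heart of the statement has already been done in Lemma \ref{lem: quadratic-PW-necessary}. The only care needed is to note that polarization applies because the total symmetry of $[\vartheta,\vartheta]_s$ makes the two repeated $\zeta$-slots interchangeable with the remaining $\eta$-slot.
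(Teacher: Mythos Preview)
Your proof is correct and follows essentially the same approach as the paper: both directions are read off Lemma \ref{lem: quadratic-PW-necessary}, and the converse proceeds by choosing an integral curve through an arbitrary covector $\zeta$, evaluating at that point to get $\iota_\zeta\iota_\zeta[\vartheta,\vartheta]_s=0$, and concluding by polarization. The only cosmetic difference is that the paper contracts once more to write $[\vartheta,\vartheta]_s(\zeta,\zeta,\zeta)=0$ before polarizing, whereas you keep the free slot $\eta$; both are equivalent.
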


\begin{proof}
 It follows immediately from Lemma \ref{lem: quadratic-PW-necessary} that, if $(\vartheta,\nabla)$ is a symmetric Poisson structure, $\gamma_a$ is a geodesic for every integral curve $a$ of $\grad_\nabla\vartheta^v$. For the converse, consider an arbitrary $\zeta\in T^*M$ and find an integral curve $a$ of $\grad_\nabla\vartheta^v$ such that $a(0)=\zeta$. By Lemma \ref{lem: quadratic-PW-necessary} and the fact that $\gamma_a$ is a geodesic, we get
 \begin{equation*}
 \iota_\zeta\iota_\zeta[\vartheta,\vartheta]_s=0\in T_{\pr(\zeta)}M,
 \end{equation*}
 hence $[\vartheta,\vartheta]_s(\zeta,\zeta,\zeta)=0$. By polarization and the fact that $\zeta\in T^*M$ was chosen arbitrarily, we get that $[\vartheta,\vartheta]_s=0$, that is, $(\vartheta,\nabla)$ is a symmetric Poisson structure.
\end{proof}

We use this dynamical interpretation to describe the characteristic distribution.

\begin{definition}\label{def:loc-geod-inv}
 We call a distribution $\Delta$ on $M$ \textbf{locally geodesically invariant} for a connection $\nabla$ on $M$ if for every geodesic $\gamma:I\rightarrow M$, with $I$ an open interval, such that $\dot{\gamma}(t_0)\in\Delta_{\gamma(t_0)}$ for some $t_0\in I$, there is an open subinterval $I'\subseteq I$ containing $t_0$ such that $\dot{\gamma}(t)\in \Delta_{\gamma(t)}$ for every $t\in I'$.
\end{definition}

The distribution in Example \ref{ex:delta-not-geod-inv} is clearly locally geodesically invariant, but it is not geodesically invariant, so Definition \ref{def:loc-geod-inv} is justified.

\begin{remark}
 It is an open question whether there exists a distribution preserved by the symmetric bracket that is not even locally geodesically invariant. 
\end{remark}

\begin{theorem}\label{thm: gen-Lewis}
 The characteristic distribution of a symmetric Poisson structure is locally geodesically invariant. 
\end{theorem}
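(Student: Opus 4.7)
The plan is to reduce the statement to Proposition \ref{prop: quadratic-PW} together with the standard uniqueness of geodesics. Given a geodesic $\gamma:I\to M$ with $\dot\gamma(t_0)\in(\im\vartheta)_{\gamma(t_0)}$, I pick $\zeta\in T^*_{\gamma(t_0)}M$ such that $\vartheta(\zeta)=\dot\gamma(t_0)$. By standard ODE theory applied to the smooth vector field $\grad_\nabla\vartheta^v$ on $T^*M$, there exists an open interval $J\ni t_0$ and an integral curve $a:J\to T^*M$ of $\grad_\nabla\vartheta^v$ with $a(t_0)=\zeta$.

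By Lemma \ref{lem: quadrtaic-PW-dynamics}, the base curve $\gamma_a=\pr\circ a$ satisfies $\dot\gamma_a=\vartheta(a)$, so in particular
\begin{equation*}
\gamma_a(t_0)=\gamma(t_0), \qquad \dot\gamma_a(t_0)=\vartheta(a(t_0))=\vartheta(\zeta)=\dot\gamma(t_0).
\end{equation*}
Since $(\vartheta,\nabla)$ is a symmetric Poisson structure, Proposition \ref{prop: quadratic-PW} guarantees that $\gamma_a$ is itself a $\nabla$-geodesic. By uniqueness of geodesics with given initial position and velocity, there is an open subinterval $I'\subseteq I\cap J$ containing $t_0$ on which $\gamma=\gamma_a$, and hence $\dot\gamma=\dot\gamma_a$ on $I'$.

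It follows that for every $t\in I'$,
\begin{equation*}
\dot\gamma(t)=\dot\gamma_a(t)=\vartheta(a(t))\in(\im\vartheta)_{\gamma(t)},
\end{equation*}
which is exactly the local geodesic invariance property of Definition \ref{def:loc-geod-inv}.

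I do not anticipate a real obstacle here: the work has effectively been packaged into the dynamical interpretation. The only subtlety worth noting is why only \emph{local} geodesic invariance is obtained, rather than global, as one gets in the regular case (Corollary \ref{cor: char-reg-sPs}). This is due to the fact that the integral curve $a$ of $\grad_\nabla\vartheta^v$ may fail to be defined on all of $I$, and moreover even the equality $\gamma=\gamma_a$ is only guaranteed on an open subinterval by the uniqueness of geodesics; beyond such an interval, nothing prevents $\dot\gamma(t)$ from leaving the (possibly singular) distribution $\im\vartheta$, as already illustrated by Example \ref{ex:delta-not-geod-inv}.
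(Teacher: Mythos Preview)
Your proof is correct and follows essentially the same route as the paper's: pick $\zeta$ with $\vartheta(\zeta)=\dot\gamma(t_0)$, run the integral curve $a$ of $\grad_\nabla\vartheta^v$ through $\zeta$, use Lemma \ref{lem: quadrtaic-PW-dynamics} and Proposition \ref{prop: quadratic-PW} to see that $\gamma_a$ is a geodesic with the same initial data, and conclude by uniqueness of geodesics. Your closing remark on why only local invariance is obtained is a nice addition but not part of the paper's argument.
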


\begin{proof}
 Let $(\vartheta,\nabla)$ be a symmetric Poisson structure on $M$. Given a geodesic $\gamma:I\rightarrow M$ such that $\dot{\gamma}(t_0)\in(\im\vartheta)_{\gamma(t_0)}$ for some $t_0\in I$, we find $\zeta\in T^*_{\gamma(t_0)}M$ such that $\vartheta(\zeta)=\dot{\gamma}(t_0)$. Using $\zeta\in T^*M$ as the initial condition $a(t_0):=\zeta$, we find an integral curve $a:I'\rightarrow T^*M$ of $\grad_\nabla \vartheta^v$ defined on an open interval $I'$ containing $t_0$. By Lemma \ref{lem: quadrtaic-PW-dynamics} and Proposition \ref{prop: quadratic-PW}, we have that $\gamma_a:I'\rightarrow M$ is a geodesic with the initial velocity $\dot{\gamma}_a(t_0)=\vartheta(\zeta)$. It follows from the uniqueness of geodesics that there is an open interval $I_0\subseteq I\cap I'$ containing $t_0$ such that $\rest{\gamma}{I_0}=\rest{\gamma_a}{I_0}$. By Lemma \ref{lem: quadrtaic-PW-dynamics}, we have that $\dot{\gamma}(t)=\dot{\gamma}_a(t)=\vartheta(a(t))\in(\im\vartheta)_{\gamma(t)}$ for every $t\in I_0$.
\end{proof}


The following example shows that locally geodesically invariant distributions that are not geodesically invariant occur for symmetric Poisson structures.

\begin{example}\label{ex: S2}
    Consider the $2$-sphere $\SSS^2$ and the unique extension $X\in\mathfrak{X}(\SSS^2)$ of $\partial_\theta$ for the latitudinal coordinate $\theta$ of a spherical chart. In particular, $X$ vanishes at the poles of the chart. For the round connection $\nabla^\circ$, we have that $\nabla^\circ_{\partial_\theta}\partial_\theta=0$, hence, by continuity, $\nabla^\circ_XX=0$, so
    \begin{equation*}
        [X\otimes X,X\otimes X]_s=2\nabla^\circ_XX\odot X\odot X=0,
    \end{equation*}
that is, $(X\otimes X,\nabla^\circ)$ is a symmetric Poisson structure. As the characteristic distribution vanishes at the poles, it is not geodesically invariant.
\end{example}

 We use the techniques of Patterson-Walker dynamics to generalize the well-known characteristic property of Killing tensors: they induce a conserved quantity along geodesics. Given a non-degenerate symmetric bivector field $\vartheta\in\symbi$, we have the unique function $\xi\in\cCi(TM)$ such that $\vartheta^v=\xi\circ\vartheta$, explicitly, $\xi(u)=\frac{1}{2}\vartheta^{-1}(u,u)$ for $u\in TM$. If $(\vartheta,\nabla)$ is a symmetric Poisson structure, by Proposition \ref{prop: sym-Poisson-nondeg}, we have that $\vartheta^{-1}\in\kil^2_\nabla(M)$, hence $\xi$ is constant along all geodesics on $M$, that is, along all geodesics tangent to the characteristic distribution $\im\vartheta =TM$. On the other hand, if $\vartheta=0$, every single function $\xi\in\cCi(TM)$ satisfies the relation $\vartheta^v=\xi\circ\vartheta$, and moreover, every $\xi\in\cCi(TM)$ is constant along geodesics tangent to the characteristic distribution $\im\vartheta= 0$. The final result of the section gives a generalization of this observation to a symmetric Poisson structure.

 \begin{proposition}
 Let $(\vartheta,\nabla)$ be a symmetric Poisson structure. Then every function $\xi\in\cCi(TM)$ satisfying $\vartheta^v=\xi\circ\vartheta$ is constant along geodesics that are tangent to the characteristic distribution $\im\vartheta\subseteq TM$. 
 \end{proposition}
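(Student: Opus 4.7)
The plan is to exploit the fact that the hypothesis $\vartheta^v=\xi\circ\vartheta$ pins down $\xi$ completely on the characteristic distribution. Indeed, for any $\zeta\in T^*_mM$ with $u:=\vartheta(\zeta)$, definition \eqref{eq: vartheta-char-metric} gives
\[
\xi(u)\;=\;\vartheta^v(\zeta)\;=\;\tfrac{1}{2}\vartheta(\zeta,\zeta)\;=\;\tfrac{1}{2} g_{\vartheta_m}(u,u).
\]
So along any curve $\sigma$ whose velocity stays in $\im\vartheta$, the composition $\xi\circ\dot\sigma$ depends only on $\vartheta$: one has $\xi(\dot\sigma(t))=\tfrac{1}{2} g_\vartheta(\dot\sigma(t),\dot\sigma(t))$. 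It therefore suffices to prove that $t\mapsto g_\vartheta(\dot\gamma(t),\dot\gamma(t))$ is constant for any geodesic $\gamma:I\to M$ whose velocity lies in $\im\vartheta$.

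The natural tool is Proposition \ref{prop: const-speed}, which gives exactly this conclusion provided $\gamma$ is $\vartheta$-admissible. In the singular case, however, the pointwise preimages $\vartheta^{-1}(\dot\gamma(t))$ need not vary smoothly in $t$, so I would produce the admissibility lift locally using Patterson--Walker dynamics. Fix $t_0\in I$, pick $\zeta\in T^*_{\gamma(t_0)}M$ with $\vartheta(\zeta)=\dot\gamma(t_0)$, and let $a:I'\to T^*M$ be the integral curve of $\grad_\nabla\vartheta^v$ with $a(t_0)=\zeta$. By Lemma \ref{lem: quadrtaic-PW-dynamics}, $\dot\gamma_a(t)=\vartheta(a(t))$ where $\gamma_a:=\pr\circ a$; by Proposition \ref{prop: quadratic-PW}, $\gamma_a$ is a $\nabla$-geodesic; and since $\dot\gamma_a(t_0)=\dot\gamma(t_0)$, uniqueness of geodesics forces $\gamma=\gamma_a$ on some open neighbourhood $I_0\subseteq I\cap I'$ of $t_0$. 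On $I_0$ the curve $a$ is a smooth $\vartheta$-lift of $\dot\gamma$, so Proposition \ref{prop: const-speed} applies and yields that $g_\vartheta(\dot\gamma,\dot\gamma)$ is constant on $I_0$.

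Since $t_0\in I$ was arbitrary, $g_\vartheta(\dot\gamma,\dot\gamma)$ is locally constant on the connected interval $I$, hence constant on $I$, which by the first paragraph gives that $\xi\circ\dot\gamma$ is constant. The main obstacle, as flagged above, is producing a smooth $\vartheta$-admissibility lift at points where the rank of $\vartheta$ drops: differentiating $\xi(\dot\gamma)$ directly would run into the fact that $\xi$ is constrained only on $\im\vartheta$, so the dynamical route via $\grad_\nabla\vartheta^v$ is what lets us sidestep this non-uniqueness. As an alternative to the final step, once $a$ is in hand one could bypass Proposition \ref{prop: const-speed} and apply Corollary \ref{cor: sPs-PW dynamics} directly, obtaining $\xi(\dot\gamma(t))=\vartheta^v(a(t))=\mathrm{const}$ on $I_0$.
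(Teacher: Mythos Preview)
Your argument is correct and follows essentially the same route as the paper: for each $t_0\in I$ you launch the integral curve $a$ of $\grad_\nabla\vartheta^v$ through a preimage of $\dot\gamma(t_0)$, match $\gamma_a$ with $\gamma$ by uniqueness of geodesics, deduce local constancy of $\xi\circ\dot\gamma$, and conclude by connectedness of $I$. The only difference is cosmetic: the paper takes your ``alternative'' as its main line, invoking Corollary~\ref{cor: sPs-PW dynamics} directly to get $\xi\circ\dot\gamma=\xi\circ\vartheta\circ a=\vartheta^v\circ a=\text{const}$ on $I_0$, rather than passing through the characteristic-metric identity $\xi(u)=\tfrac{1}{2}g_{\vartheta_m}(u,u)$ and Proposition~\ref{prop: const-speed}.
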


 \begin{proof}
 Let $\gamma:I\rightarrow M$ be a geodesic that is tangent to $\im\vartheta\subseteq TM$. By repeating the procedure from the proof of Theorem \ref{thm: gen-Lewis} for every $t\in I$, we find an open subinterval $I_{t}\subseteq I$ containing $t$ and an integral curve $a_{t}:I_{t}\rightarrow T^*M$ of $\grad_\nabla\vartheta^v$ such that $\gamma_{a_{t}}=\rest{\gamma}{I_{t}}$. By Corollary \ref{cor: sPs-PW dynamics}, the function $\vartheta^v\in\cCi(T^*M)$ is constant along $a_{t}$, hence, for every $\xi\in\cCi(TM)$ satisfying $\vartheta^v=\xi\circ\vartheta$, we have that
 \begin{equation*}
\xi\circ\vartheta\circ a_{t}
 \end{equation*}
 is a constant function. By Lemma \ref{lem: quadrtaic-PW-dynamics}, we have that $\vartheta\circ a_{t}=\dot{\gamma}_{a_{t}}$ and thus $\xi\circ\rest{\dot{\gamma}}{I_{t}}$ is constant. It follows from the connectedness of $I$ that $\xi\circ\dot{\gamma}$ is constant.
 \end{proof}

\section{Examples of symmetric Poisson structures}
 We present various families of examples of symmetric Poisson structures.

 \subsection{Low-dimensional manifolds}\label{sec: low-dim}
 If $\dim M=1$,
 the conditions (see Proposition \ref{prop: sym-Poisson-bracket} and Corollary \ref{cor: ssPs-integrability})
 \begin{align*}
 \nabla_{\vartheta(\alpha)}\vartheta&=0, & (\nabla_{\vartheta(\alpha)}\vartheta)(\beta,\eta)+\cyc(\alpha,\beta,\eta)&=0
 \end{align*}
 coincide, and thus symmetric Poisson structures
 are strong in dimension $1$.

 On $M=\R$, every connection $\nabla$ is uniquely represented by $ h\in\cCi(\R)$ via $\nabla_{\partial_x}\partial_x=h\,\partial_x$, and every $\vartheta\in\mathfrak{X}^2_\text{sym}(\R)$ is uniquely given by $f\in\cCi(\R)$ via $\vartheta= f\,\partial_x\otimes\partial_x$. The pair $(\vartheta,\nabla)$ is a (strong) symmetric Poisson structure if and only if
\begin{equation}\label{eq: sPs-1-dim}
 0=ff'+2f^2h=\frac{1}{2}(f^2)'+2f^2h.
\end{equation}
This is a first order linear ODE with variable coefficients for the function $f^2$, whose general solution is parametrized by a constant $\lambda\in\R$ as
\begin{equation*}
 f^2=\lambda\,\e^{-4H},
\end{equation*}
where $H$ is a primitive function of $h$. It follows that a symmetric Poisson structure on $\R$ is either trivial $(\vartheta=0)$ or non-degenerate. Since it is also always strong we get, by Proposition \ref{prop: ssPs-nondeg}, that $\nabla$ is necessarily the Levi-Civita connection of $\vartheta^{-1}$.

As all $1$-dimensional manifolds are locally diffeomorphic to $\R$ and the condition for being a symmetric Poisson structure is local, we find that the set of symmetric Poisson structures on a $1$-dimensional manifold decomposes disjointly into \textit{connections}, \textit{Riemannian metrics} and \textit{negative-definite metrics}.

\begin{example}
 The above discussion implies that, although the symmetric bivector field $\vartheta=x\,\partial_x\otimes\partial_x\in\mathfrak{X}^2_\emph{sym}(\R)$ from Example \ref{ex: sing-d} induces the totally geodesic partition of $\R$ with respect to the Euclidean connection (see Example \ref{ex: R-part}), it is not a symmetric Poisson structure for any choice of a connection on $\R$. 
\end{example}

As dimension grows the situation gets much more complicated. The characteristic ODE \eqref{eq: sPs-1-dim} becomes a system of PDEs and symmetric Poisson structures become richer. If $\dim M=2$, Examples \ref{ex: inclusion} and \ref{ex: non-deg-Kill} demonstrate that singular and non-strong symmetric Poisson structures can occur.

\subsection{Geodesic vector fields}\label{sec: geo-vf}
Given a connection $\nabla$ on $M$, recall that a vector field $X\in\mathfrak{X}(M)$ is called \textbf{geodesic} when there is $h\in\cCi(M)$ such that
\begin{equation*}
 \nabla_XX=hX.
\end{equation*}
Equivalently, the integral curves of $X$ can be reparametrized to become geodesics.

\begin{proposition}
 Let $\vartheta=f\,X\otimes X$ for some $X\in\mathfrak{X}(M)$ and $f\in\cCi(M)$ and $\nabla$ be a torsion-free connection on $M$. The pair $(\vartheta,\nabla)$ is a symmetric Poisson structure if and only if $X$ is geodesic, and moreover,
 \begin{equation*} 
 Xf^2 +4f^2h=0,
 \end{equation*}
 where $h\in \cCi(M)$ is uniquely determined by $\nabla_XX=hX$. In addition, every symmetric Poisson structure of this form is strong.
\end{proposition}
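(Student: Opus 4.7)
I would reduce the integrability condition to a single pointwise identity via polarization, extract the geodesic and scalar conditions from it, and then derive the strong conclusion as a direct consequence.

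By Proposition~\ref{prop: sym-Poisson-bracket} and polarization (the cyclic sum collapses upon setting $\alpha=\beta=\eta$), the pair $(\vartheta,\nabla)$ is symmetric Poisson if and only if $(\nabla_{\vartheta(\alpha)}\vartheta)(\alpha,\alpha)=0$ for every $\alpha\in\Omega^1(M)$. Applying the Leibniz rule to $\vartheta=fX\otimes X$ yields
\[(\nabla_Y\vartheta)(\alpha,\alpha)=Y(f)\,\alpha(X)^2+2f\,\alpha(X)\,\alpha(\nabla_YX),\]
so substituting $Y=\vartheta(\alpha)=f\,\alpha(X)\,X$ and using $2fX(f)=X(f^2)$ gives
\[(\nabla_{\vartheta(\alpha)}\vartheta)(\alpha,\alpha)=\tfrac{1}{2}\,\alpha(X)^2\,\alpha\bigl(X(f^2)\,X+4f^2\,\nabla_XX\bigr).\]

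The key step is then to conclude that this vanishes for all $\alpha$ precisely when the tangent vector $V:=X(f^2)\,X+4f^2\,\nabla_XX$ vanishes wherever $X\ne 0$: at points where $X_m=0$ the expression is trivially zero, while at a point where $X_m\ne 0$ the covectors $\alpha_m$ with $\alpha(X_m)\ne 0$ form a dense open subset of $T^*_mM$, forcing $\alpha(V_m)=0$ there, hence $V_m=0$. Since $V=0$ requires $\nabla_XX$ to be a pointwise multiple of $X$, we obtain $\nabla_XX=hX$ for a uniquely determined $h\in\cCi(M)$; substituting this back gives $(X(f^2)+4f^2h)\,X=0$, which is the claimed scalar identity.

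For the final assertion, by Corollary~\ref{cor: ssPs-integrability} it suffices to show $\nabla_{\vartheta(\alpha)}\vartheta=0$. Under the geodesic condition, the Leibniz computation above specialises to $\nabla_X\vartheta=(X(f)+2fh)\,X\otimes X$, whence
\[\nabla_{\vartheta(\alpha)}\vartheta=f\,\alpha(X)\,(X(f)+2fh)\,X\otimes X=\tfrac{1}{2}\,\alpha(X)\,(X(f^2)+4f^2h)\,X\otimes X=0\]
by the scalar identity. The main obstacle will be the careful handling of the zero loci of $X$ and $f$: the identity $V=0$ does not constrain $\nabla_XX$ where $X$ vanishes, so the global smoothness and uniqueness of $h$ is cleanest when $X$ is assumed nowhere vanishing (otherwise one argues on the open set $\{X\ne 0\}$ and extends by continuity).
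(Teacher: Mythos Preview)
Your proof is correct and reaches the same key identity as the paper, namely that the integrability condition collapses to the vanishing of the vector $V=X(f^2)\,X+4f^2\,\nabla_XX$, but you arrive there by a slightly different route. The paper expands $[\vartheta,\vartheta]_s$ directly from the axioms of the symmetric Schouten bracket (Definition~\ref{def: s-Schouten}), obtaining
\[
2[\vartheta,\vartheta]_s=\bigl(2f(Xf)\,X+4f^2\,\nabla_XX\bigr)\odot X\odot X,
\]
and then reads off the conditions from this symmetric $3$-vector; you instead start from the equivalent characterization of Proposition~\ref{prop: sym-Poisson-bracket}, polarize, and evaluate $(\nabla_{\vartheta(\alpha)}\vartheta)(\alpha,\alpha)$ by hand. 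Both computations are short and yield the same object; the paper's packaging avoids the covector $\alpha$ until the very end, while yours makes the pointwise linear-algebra step (density of $\{\alpha:\alpha(X)\ne0\}$) explicit. For the strong assertion the two arguments are essentially identical.

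Your closing remark about the zero loci of $X$ and $f$ is well taken: the condition $V=0$ imposes no constraint on $\nabla_XX$ at points where $f$ or $X$ vanishes, so the global existence and uniqueness of a smooth $h$ with $\nabla_XX=hX$ is only automatic when $X$ is nowhere vanishing. The paper's proof does not address this either, so your caveat is a genuine refinement rather than a gap.
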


\begin{proof}
By the axioms of the symmetric Schouten bracket and $\vartheta=\frac{1}{2}f\,X\odot X$, 
 \begin{align*}
 2[\vartheta,\vartheta]_s&=2f[f,X]_s\odot X\odot X\odot X+2f^2[X,X]_s\odot X\odot X\\
 &=2f(Xf)\,X\odot X\odot X+4f^2\nabla_XX\odot X\odot X.
 \end{align*}
 Therefore, $[\vartheta,\vartheta]_s=0$ if and only if $\nabla_XX=hX$ for some $h\in\cCi(M)$ such that $Xf^2+4f^2h=0$. On the other hand, we have that
 \begin{equation*}
 2\nabla_{\vartheta(\alpha)}\vartheta=f\alpha(X)\,\nabla_X(f\,X\otimes X)=\alpha(X)(f(Xf)\,X\otimes X+2f^2\, \nabla_XX\otimes X),
 \end{equation*}
 so, by Corollary \ref{cor: ssPs-integrability}, $(\vartheta,\nabla)$ is a strong symmetric Poisson structure whenever it is a symmetric Poisson structure.
\end{proof}

As a regular symmetric bivector field $\vartheta$ with $\rk\vartheta=1$ can always be locally written as $\vartheta=f\,X\otimes X$, we can generalize the result in the first paragraph of Section \ref{sec: low-dim} as follows.

\begin{corollary}\label{cor: rank-1}
 A symmetric Poisson structure $(\vartheta,\nabla)$ with $\rk \vartheta=1$ is strong.
\end{corollary}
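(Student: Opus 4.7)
The plan is to reduce the corollary to the preceding Proposition by a local factorization of $\vartheta$. Being a strong symmetric Poisson structure is a local property: by Corollary \ref{cor: ssPs-integrability}, strongness is equivalent to the pointwise condition $\nabla_{\vartheta(\alpha)}\vartheta=0$ for every local $1$-form $\alpha$. It therefore suffices to establish it on a neighbourhood of each $m\in M$.

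Fix $m\in M$. Since $\rk\vartheta\equiv 1$, the characteristic distribution $\im\vartheta$ is a smooth line subbundle of $TM$ and admits a nowhere-vanishing smooth section $X$ on a neighbourhood $U$ of $m$. Viewed as a bundle map $T^*U\to TU$, $\vartheta|_U$ takes values in $\spann\{X\}$, so the $\cCi(U)$-linear map $\alpha\mapsto\vartheta|_U(\alpha)$ factors through a unique $Y\in\mathfrak{X}(U)$ as $\vartheta|_U(\alpha)=\alpha(Y)\,X$, that is, $\vartheta|_U=X\otimes Y$. The symmetry of $\vartheta$ forces $\alpha(X)\beta(Y)=\alpha(Y)\beta(X)$ for all $\alpha,\beta\in\Omega^1(U)$; evaluating on any $\alpha$ with $\alpha(X)=1$ yields $Y=f\,X$ for the smooth function $f:=\alpha(Y)$, and hence $\vartheta|_U=f\,X\otimes X$.

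Since $[\vartheta,\vartheta]_s=0$ restricts to $U$, the pair $(\vartheta|_U,\nabla|_U)$ is a symmetric Poisson structure of the form covered by the preceding Proposition. Its last clause therefore gives that $(\vartheta|_U,\nabla|_U)$ is strong, i.e., $\nabla_{\vartheta(\alpha)}\vartheta=0$ on $U$ for every $\alpha\in\Omega^1(U)$. As this holds around every $m\in M$, the pair $(\vartheta,\nabla)$ is strong by Corollary \ref{cor: ssPs-integrability}.

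The only point requiring care is the smooth local factorization $\vartheta|_U=f\,X\otimes X$; once it is in place, the corollary is an immediate consequence of the preceding Proposition applied locally.
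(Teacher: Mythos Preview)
Your proof is correct and follows essentially the same approach as the paper's: the paper also remarks that a regular $\vartheta$ with $\rk\vartheta=1$ can always be locally written as $\vartheta=f\,X\otimes X$ and then invokes the preceding Proposition. You have simply spelled out the local factorization and the local-to-global step in more detail than the paper does.
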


In particular, a pair $(\vartheta,\nabla)$ such that $\vartheta=\lambda\,X\otimes X$ for some constant $\lambda\in \R$ and $X\in\mathfrak{X}(M)$ is (strong) symmetric Poisson if and only if $X$ is \textbf{autoparallel}, that is, $\nabla_XX=0$, or equivalently, the integral curves of $X$ are geodesics (see Examples \ref{ex: 3-fol} and \ref{ex: S2} for an application of this). In other words, the concept of symmetric Poisson structures can be seen as a generalization of autoparallel vector fields.

\subsection{Left-invariant symmetric Poisson structures}\label{sec: li-sps}
We use the notation $\mathfrak{X}_\text{L}(G)$ and $\mathfrak{X}^2_\text{sym,L}(G)$ for the space of left-invariant vector fields and symmetric bivector fields respectively.

Every Lie group $G$ admits the natural flat left-invariant connection $\wn$ determined by the condition that all left-invariant vector fields are parallel (a special example of the so-called \textit{Weitzenböck connection}). Its torsion is
\begin{equation*}
 T_{\wn}(X,Y)=-[X,Y],
\end{equation*}
for every $X,Y\in\mathfrak{X}_\text{L}(G)$. Therefore, the connection $\wn$ is torsion-free if and only if the Lie group is abelian, and the associated torsion-free connection is given by
\begin{equation*}
 \wnt_XY=\frac{1}{2}[X,Y].
\end{equation*}
Note that $\wnt$ is no longer flat in general. We have, for $X,Y,Z\in\mathfrak{X}_\text{L}(G)$,
\begin{equation*}
 R_{\wnt}(X,Y)Z=-\frac{1}{4}[[X,Y],Z],
\end{equation*}
so, it is flat if and only if the Lie algebra is $2$\textit{-step nilpotent}, equivalently, associative.

\begin{proposition}\label{prop: left-inv-sPs}
 The pair $(\vartheta,\wnt)$ is a symmetric Poisson structure on $G$ for every $\vartheta\in\mathfrak{X}^2_\emph{sym,L}(G)$. If the Lie group is, in addition, abelian, every $\vartheta\in\mathfrak{X}^2_\emph{sym,L}(G)$ is parallel with respect to $\wnt$, hence $(\vartheta,\wnt)$ is a strong symmetric Poisson structure.
\end{proposition}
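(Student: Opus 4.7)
The plan is to exploit the fact that on left-invariant vector fields the symmetric bracket associated with $\wnt$ vanishes identically. Indeed, for $X, Y \in \mathfrak{X}_{\text{L}}(G)$, using the formula $\wnt_X Y = \frac{1}{2}[X,Y]$ given just before the statement, we compute
\begin{equation*}
\pg{X,Y}_s = \wnt_X Y + \wnt_Y X = \frac{1}{2}[X,Y] + \frac{1}{2}[Y,X] = 0.
\end{equation*}
This is the key input, and the rest of the first claim follows from the algebraic structure of the symmetric Schouten bracket.

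Next, I would pick a basis $(X_1,\ldots,X_n)$ of $\mathfrak{X}_{\text{L}}(G)$, so that any $\vartheta \in \mathfrak{X}^2_{\text{sym,L}}(G)$ can be written with constant coefficients as $\vartheta = \frac{1}{2}c^{ij}\,X_i \odot X_j$. Applying formula \eqref{eq: sym-Schouten-decomposable} to each pair of decomposable terms gives
\begin{equation*}
[X_i \odot X_j,\, X_k \odot X_l]_s = \pg{X_i,X_k}_s \odot X_j \odot X_l + \cdots = 0,
\end{equation*}
since every $\pg{X_a,X_b}_s$ vanishes by the computation above. By $\R$-bilinearity of $[\,,\,]_s$ and the fact that the coefficients $c^{ij}$ are constants, we conclude $[\vartheta,\vartheta]_s = 0$, so $(\vartheta,\wnt)$ is a symmetric Poisson structure.

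For the abelian case, I first claim $\wnt X = 0$ for every $X \in \mathfrak{X}_{\text{L}}(G)$. Since $\wnt$ is $\cCi(G)$-linear in its lower entry, it suffices to evaluate on the left-invariant frame: $\wnt_{X_i} X = \frac{1}{2}[X_i,X] = 0$ by abelianness. The Leibniz rule for $\wnt$ on symmetric multivector fields, combined with $\wnt$ acting as $0$ on any left-invariant factor, then yields $\wnt\vartheta = 0$ for every $\vartheta \in \mathfrak{X}^2_{\text{sym,L}}(G)$. In particular, $\nabla_{\vartheta(\alpha)}\vartheta = 0$ for all $\alpha \in \Omega^1(G)$, so by Corollary \ref{cor: ssPs-integrability}, $(\vartheta,\wnt)$ is a strong symmetric Poisson structure. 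No step here is a serious obstacle; the only subtlety is being careful that the constancy of the coefficients $c^{ij}$ is what makes the bilinear extension work (if coefficients were functions, one would pick up extra terms from the derivation property).
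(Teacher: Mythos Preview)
Your proof is correct and follows essentially the same approach as the paper: both rely on the vanishing of $\pg{X,Y}_s$ for left-invariant $X,Y$ and then invoke formula \eqref{eq: sym-Schouten-decomposable} on decomposable elements, using that left-invariant symmetric bivector fields are $\R$-linear combinations of symmetric products of left-invariant vector fields. Your treatment of the abelian case is more explicit than the paper's (which leaves it implicit), but the idea is the same.
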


\begin{proof}
 The space $\mathfrak{X}^2_\text{sym,L}(G)$ is clearly generated by $\R$-linear combinations and the symmetric product $\odot$ from the space $\mathfrak{X}_\text{L}(G)$. The result follows easily from \eqref{eq: sym-Schouten-decomposable} and the fact that for the symmetric bracket corresponding to $\wnt$, we have that $\pg{X,Y}_s=0$ for every $X,Y\in\mathfrak{X}_\text{L}(G)$.
\end{proof}

\begin{example}
 Example \ref{ex: SO(3)} clearly fits into this construction.
\end{example}

 For $V^*$ seen as an abelian Lie group, the condition of being left-invariant for $\vartheta\in\mathfrak{X}^2_\text{sym}(V^*)$ is equivalent to that its component functions $\vartheta(\dif x^i,\dif x^j)$ are constant for any global linear coordinates $\lbrace x^i\rbrace$. The connection $\wn=\wnt$ is torsion-free in this case and we will refer to it as the \textbf{Euclidean connection} $\nabla^\text{Euc}$ on $V^*$. By Proposition \ref{prop: left-inv-sPs}, we get that every $\vartheta\in\mathfrak{X}^2_\text{sym,L}(V^*)$ is parallel with respect to $\nabla^\text{Euc}$.

However, even a non-abelian Lie group admits left-invariant strong symmetric Poisson structures with $\wnt$. What is more, they can be \mbox{non-par}allel.

\begin{example} The \textit{affine group} $\aff(1)\cong\GL(\R)\ltimes\R$ is a \mbox{$2$-dim}ensional \mbox{non-ab}elian Lie group. The standard basis $(X,Y)$ for $\mathfrak{X}_\emph{L}(\aff(1))$ satisfies
 \begin{equation*}
 [X,Y]=Y.
 \end{equation*}
 An arbitrary $\vartheta\in\mathfrak{X}^2_{\emph{sym,L}}(\aff(1))$ is determined by three constants $\lambda_1,\lambda_2,\lambda_3\in\R$ via
 \begin{equation*}
 \vartheta:=\lambda_1X\otimes X+\lambda_2 X\odot Y+\lambda_3 Y\otimes Y.
 \end{equation*}
By Proposition \ref{prop: left-inv-sPs}, we have that $(\vartheta,\wnt)$ is symmetric Poisson. A straightforward calculation shows that $(\vartheta,\wnt)$ is strong if and only if $\lambda_1\lambda_3-\lambda_2^2=0$, that is, $\vartheta$ is
 degenerate. It is, moreover, parallel if and only if $\vartheta=0$.
\end{example}

Note that it is not always the case that $\vartheta\in\mathfrak{X}^2_\text{sym,L}(G)$ has to be degenerate in order for $(\vartheta,\wn^0)$ to be strong symmetric Poisson on a non-abelian Lie group.

\begin{example}
The round metric $g^\circ$ on the $3$-sphere $\SSS^3$ seen as a Lie group is the left-invariant Riemannian metric induced by the Cartan-Killing form $\mathfrak{c}$ on $\mathfrak{su}(2)$:
 \begin{equation*}
 g^\circ(X,Y)=-\mathfrak{c}(X_{e},Y_{e}),
 \end{equation*}
 for $X,Y\in\mathfrak{X}_\emph{L}(\SSS^3)$. By a straightforward calculation, we find that the connection $\wn^0$ is actually the Levi-Civita connection of $g^\circ$, hence, by Proposition \ref{prop: ssPs-nondeg}, we get that $((g^\circ)^{-1},\wn^0)$ is a strong symmetric Poisson structure on $\SSS^3$.
\end{example}

We show that $\SSS^3$ admits also degenerate left-invariant strong symmetric Poisson structures and demonstrate their relation to the \textit{Hopf fibration}.

\begin{example}
As every $X\in\mathfrak{X}_\emph{L}(\SSS^3)$ is autoparallel with respect to $\wnt$, we have that $(X\otimes X,\wnt)$ is always a strong symmetric Poisson structure on $\SSS^3$. The corresponding totally-geodesic regular foliation is given by individual geodesics that are the integral curves of $X$. Therefore, the leaves of the foliation are great circles (diffeomorphic to $\SSS^1)$ with the round metric.

If we represent $\SSS^3$ as unit quaternions, left-invariant vector fields on $\SSS^3$ can be seen as imaginary quaternions. Evaluating the vector field corresponding to $a i+bj+ck\in\mathbb{H}$ at a point $m\in\SSS^3\subseteq\mathbb{H}$ is given by the quaternionic multiplication as $m(ai+bj+ck)\in\mathbb{H}\cong T_m\SSS^3$. Integral curves $\gamma(t)=x(t)+y(t) i+z(t) j+w(t)k$, of such a vector field are thus given by the system of homogeneous linear ODEs
\begin{equation*}
 \begin{pmatrix}
 \dot{x}\\
 \dot{y}\\
 \dot{z}\\
 \dot{w}
 \end{pmatrix}=\begin{pmatrix}
 0 & -a &-b &-c\\
 a &0 &c &-b\\
 b &-c &0 & a\\
 c &b &a &0
 \end{pmatrix}\begin{pmatrix}
 x\\
 y\\
 z\\
 w
 \end{pmatrix}.
\end{equation*}
In the special case when $X$ is given by $a=1$, $b=c=0$, we easily find the solution:
\begin{align*}
 x(t)&=x(0)\cos t-y(0)\sin t, & z(t)&=z(0)\cos t+w(0)\sin t,\\
 y(t)&=y(0)\cos t+x(0)\sin t, & w(t)&=w(0)\cos t-z(0)\sin t.
\end{align*}
Indeed, the image of the maximal integral curve of $X$ starting at $m\in\SSS^3$ is the fibre through $m$ of the Hopf fibration $\SSS^3\rightarrow \SSS^2$.

\end{example}

 Torsion-free connections on $G$ other than $\wnt$ can also make a left-invariant symmetric bivector field a symmetric Poisson structure.

 \begin{example}\label{ex: involutive-(s)sPs} 
 Consider the $3$-dimensional Lie group $G:=\aff(1)\times\R$. For the standard basis $(X,Y,Z)$ of $\mathfrak{X}_\text{L}(G)$, we have
 \begin{align*}
 [X,Y]&=Y, &[X,Z]&=0, &[Y,Z]&=0.
 \end{align*}
 The left-invariant symmetric bivector field $\vartheta:=X\odot Y$ clearly defines an involutive symmetric Poisson structure $(\vartheta,\wnt)$ on $G$ that is not strong as we have $\wnt_X\vartheta=\frac{1}{2}\vartheta$. However, if we choose a different torsion-free connection $\nabla$, the pair $(\vartheta,\nabla)$ can be made a strong symmetric Poisson structure. For instance, consider the torsion-free connection $\nabla$ determined by
 \begin{align*}
 \nabla_XX&:=-X, & \nabla_YY&:=0, & \nabla_ZZ&:=0,\\
 \nabla_XY&:=Y, & \nabla_XZ&:=0, & \nabla_YZ&:=0.
 \end{align*}
 It is easy to check that $(\vartheta,\nabla)$ is not only strong but $\vartheta$ is parallel with respect to $\nabla$.
\end{example}

Example \ref{ex: involutive-(s)sPs} shows that even non-strong symmetric Poisson structures can be involutive and thus give a smooth partition, which justifies Definition \ref{def: inv-sPs}. A natural question arises: \textit{Is to possible to find an involutive symmetric Poisson structure that cannot be made strong by changing the torsion-free connection?} We address this question in Example \ref{ex: R5} and give the positive answer to it.

\subsection{Types of symmetric Poisson structures}
We show how different types of symmetric Poisson structures are related to each other in Figure \ref{fig: diagram}. We use the notation of Propositions \ref{prop: sym-Poisson-nondeg} and \ref{prop: ssPs-nondeg}, in particular, by \mbox{(pseudo-)Rie}mannian metrics we mean pairs consisting of the inverse of a \mbox{(pseudo-)}Riemannian metric and its Levi-Civita connection.

\begin{figure}[ht]
\begin{center}
\includegraphics[scale=1.3]{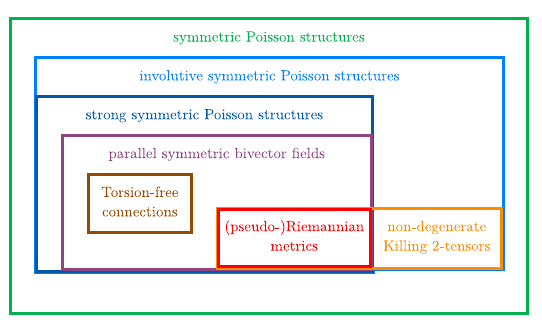}
\end{center}\vspace{-.5cm}
\caption{Types of symmetric Poisson structures.}\label{fig: diagram}
\end{figure}

Examples \ref{ex: inclusion}, \ref{ex: SO(3)}, \ref{ex: non-deg-Kill}, \ref{ex: involutive-(s)sPs} and the fact that we can easily find parallel symmetric bivector fields that are neither zero nor non-degenerate (e.g. a left-invariant degenerate symmetric bivector field on an abelian Lie group with $\wnt$) show that all the inclusions in Figure \ref{fig: diagram} are strict (when regarded as classes of manifolds admitting such structures).

\section{\for{toc}{Linear symmetric Poisson structures and Jacobi-Jordan algebras}\except{toc}{Linear symmetric Poisson structures\\and Jacobi-Jordan algebras}}
Let us recall a well-known characterization of Lie algebras in the language of Poisson geometry:
\begin{equation*}
 \left\{\begin{array}{c}
\text{linear Poisson}\\
 \text{structures on }V^*
 \end{array}\right\}\overset{\sim}{\longleftrightarrow}\left\{\begin{array}{c}
 \text{Lie algebra}\\
 \text{structures on }V
 \end{array}\right\}, 
\end{equation*}
see e.g. \cite{CraLPG}. The aim of this section is to find an analogous result for symmetric Poisson structures on $M=V^*$.

In the case $M=V^*$, a symmetric Poisson bracket $\lbrace\,,\rbrace$ is fully determined by its restriction to the space of linear functions $\mathcal{C}_\text{lin}^\infty(V^*)$. It follows from the fact that
\begin{equation}\label{eq: bracket}
 \po{f,\cg}=\po{x^i,x^j}\frac{\partial f}{\partial x^i}\frac{\partial \cg}{\partial x^j},
\end{equation}
where $\{ x^i\}$ are any global linear coordinates on $V^*$. 

\begin{definition}
A symmetric Poisson bracket $\lbrace\,,\rbrace$ on $\cCi(V^*)$ is called \textbf{linear} if
 \begin{equation*}\label{eq: lin-sPs}
 \po{\mathcal{C}^\infty_\text{lin}(V^*),\mathcal{C}^\infty_\text{lin}(V^*)}\subseteq\mathcal{C}^\infty_\text{lin}(V^*).
 \end{equation*}
\end{definition}

\subsection{Characterization of linear symmetric Poisson structures}
 As there is the canonical identification $\mathcal{C}_\text{lin}^\infty(V^*)\cong V$ and a symmetric Poisson bracket is fully determined by its values on linear functions by \eqref{eq: bracket}, every linear bracket $\lbrace\,,\rbrace$ is equivalent to a commutative algebra structure $\cdot$ on $V$ given, for $u, v \in V$, by
\begin{equation}\label{eq: J-J product}
 \iota_{u\cdot v}:=\po{\iota_u,\iota_v}.
\end{equation}

\begin{lemma}\label{lem: ass-Jac-Jord}
 A pair $(\lbrace\,,\rbrace,\nabla)$ consisting of a linear symmetric Poisson bracket and a torsion-free connection on $V^*$ is a strong symmetric Poisson structure if and only if, for every $u,v,w\in V$, we have
\begin{align*}
 u\cdot (v\cdot w)&=0 & &\text{and} & \nabla_{X_{\iota_u}}X_{\iota_v}&=0.
\end{align*}
\end{lemma}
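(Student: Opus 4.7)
The plan is to reduce strongness to a testing condition on pairs of linear functions and then to extract (a) and (b) by a degree-in-$x$ analysis combined with a symmetry argument. By Corollary \ref{cor: ssPs-integrability} and the fact that $\im\vartheta$ is pointwise spanned by the gradients $X_{\iota_u}$, $u\in V$, strongness is equivalent to $\nabla_{X_{\iota_u}}\vartheta=0$ for every $u\in V$. Testing this against the constant $1$-forms $\dif\iota_v$, $\dif\iota_w$ and unwinding it via the derivation property of $\nabla$, the identity $(\nabla_X\dif f)(Y)=X(Y(f))-(\nabla_X Y)(f)$, the relation $X_{\iota_a}(\iota_b)=\iota_{a\cdot b}$, and commutativity of $\cdot$, a direct computation yields
\begin{equation*}
(\nabla_{X_{\iota_u}}\vartheta)(\dif\iota_v,\dif\iota_w)=-\iota_{u\cdot(v\cdot w)}+(\nabla_{X_{\iota_u}}X_{\iota_w})(\iota_v)+(\nabla_{X_{\iota_u}}X_{\iota_v})(\iota_w).
\end{equation*}
The implication $(\Leftarrow)$ is now immediate: assuming (a) and (b), all three terms on the right-hand side vanish.

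For the converse, I would introduce the linear endomorphism $L_u\colon V\to V$, $L_u(w):=u\cdot w$, so that $X_{\iota_u}|_x=L_u^*(x)$ is a linear vector field on $V^*$. A pointwise computation in global linear coordinates then gives $\nabla_{X_{\iota_u}}X_{\iota_v}|_x=(L_uL_v)^*(x)+\Gamma(x)(L_u^*(x),L_v^*(x))$, where the first summand is linear in $x$ and the Christoffel contribution $\Gamma(x)(L_u^*(x),L_v^*(x))$ is of degree $\geq 2$ in $x$. Substituting back, the vanishing of $(\nabla_{X_{\iota_u}}\vartheta)(\dif\iota_v,\dif\iota_w)$ becomes
\begin{equation*}
\iota_{u\cdot(v\cdot w)}(x)+G(u,v,w;x)+G(u,w,v;x)=0,
\end{equation*}
where $G(u,v,w;x):=\langle w,\Gamma(x)(L_u^*(x),L_v^*(x))\rangle$ collects the higher-order contributions. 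Matching the linear-in-$x$ part (the only such contribution is $\iota_{u\cdot(v\cdot w)}$) isolates $u\cdot(v\cdot w)=0$, which is (a).

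The remaining degree-$\geq 2$ part then asserts that $G$ is antisymmetric in the pair $(v,w)$, while $G$ is automatically symmetric in $(u,v)$ because $\Gamma$ is symmetric in its two lower indices by torsion-freeness of $\nabla$. Composing these two symmetries one obtains
\begin{equation*}
G(u,v,w)=G(v,u,w)=-G(v,w,u)=-G(w,v,u)=G(w,u,v)=G(u,w,v)=-G(u,v,w),
\end{equation*}
so $G\equiv 0$. Together with (a), this forces $\Gamma(x)(L_u^*(x),L_v^*(x))=0$ and hence $\nabla_{X_{\iota_u}}X_{\iota_v}=0$, which is (b). The main hurdle is this last symmetry argument, which is what actually decouples the purely algebraic condition (a) from the connection-dependent condition (b).
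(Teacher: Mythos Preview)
Your proof is correct and follows essentially the same strategy as the paper's: reduce strongness to a condition tested on linear functions, compute explicitly, and separate the resulting identity by order of vanishing in $x$ (linear versus ``quadratic-times-smooth'').

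The packaging differs slightly. The paper starts from Definition~\ref{def: ssPs} (the condition $X_{\{f,\cg\}}=[X_f,X_\cg]_s$) and arrives at the equation
\[
\iota_{w\cdot(u\cdot v)-u\cdot(w\cdot v)-(w\cdot u)\cdot v}=2w_k\,\iota_{v\cdot e_i}\iota_{u\cdot e_j}\,\Gamma^k_{ij},
\]
whose right-hand side is a single Christoffel term. The degree separation then gives the Christoffel term $=0$ immediately, yielding (b) directly once (a) is in hand; to extract (a) from the linear part, the paper needs a swap-and-add symmetrization. You instead start from Corollary~\ref{cor: ssPs-integrability} and arrive at $\iota_{u\cdot(v\cdot w)}+G(u,v,w)+G(u,w,v)=0$, whose linear part gives (a) at once, but whose quadratic part is symmetrized in $(v,w)$; you then need the cyclic symmetry argument (combining torsion-freeness of $\nabla$ with the antisymmetry in $(v,w)$) to kill $G$. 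So each route trades one short symmetry manipulation for another. One small point of phrasing: ``degree $\geq 2$ in $x$'' should be read as ``vanishes to second order at $0$'', since the Christoffel symbols are merely smooth; this is all that is needed for the Taylor-coefficient matching you perform.
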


\begin{proof}
By definition, $(\lbrace\,,\rbrace, \nabla)$ is strong symmetric Poisson if and only if
 \begin{equation}\label{eq: lin-ssPS}
 \po{\iota_w,\po{\iota_u,\iota_v}}=(\dif\iota_w)(\pg{X_{\iota_u},X_{\iota_v}}_s)
 \end{equation}
 for every $u,v,w\in V$. By \eqref{eq: J-J product}, the left hand side of \eqref{eq: lin-ssPS} is equal to
 \begin{equation}\label{eq: lin-ssPs-LHS}
 \iota_{w\cdot (u\cdot v)},
 \end{equation}
 which is a linear function on $V^*$. Choosing a basis $\{ e_i\}$ for $V$, we get global linear coordinates $\{ x^i\}$ on $V^*$ and find that
 \begin{align*}
 \dif \iota_w&=w_i\dif x^i, & X_{\iota_u}&=\iota_{u\cdot e_i}\partial_{x^i}, & X_{\iota_v}&=\iota_{v\cdot e_i}\partial_{x^i},
 \end{align*}
 where $\{ w_i\}, \{ u_j\}, \{ v_k\}$ are the components of the vectors $w,u,v$ with respect to $\{ e_i\}$. It follows from torsion-freeness of $\nabla$ that
 \begin{equation}\label{eq: lin-ssPs-RHS}
 (\dif\iota_w)(\pg{X_{\iota_u},X_{\iota_v}}_s)=\iota_{u\cdot(v\cdot w)+v\cdot(u\cdot w)}+2 w_k \iota_{v\cdot e_i}\iota_{u\cdot e_j}\Gamma^k_{ij},
 \end{equation}
 where $\{ \Gamma^k_{ij} \}\subseteq \cCi(V^*)$ are Christoffel symbols of $\nabla$ corresponding to global coordinates $\{ x^i\}$. That fact that \eqref{eq: lin-ssPs-LHS} is equal to \eqref{eq: lin-ssPs-RHS} is thus equivalent to
 \begin{equation}\label{eq: lin-ssPs-2sides}
 \iota_{w\cdot(u\cdot v)-u\cdot(w\cdot v)-(w\cdot u)\cdot v}=2 w_k \iota_{v\cdot e_i}\iota_{u\cdot e_j}\Gamma^k_{ij}.
 \end{equation}
While the left-hand side is a linear function on $V^*$, the right-hand side is a linear combination of quadratic polynomials in $\{ x^i\}$ with coefficients in $\cCi(V^*)$, hence \eqref{eq: lin-ssPs-2sides} is satisfied if and only if both sides vanish independently. Vanishing of the left-hand side of \eqref{eq: lin-ssPs-2sides} is equivalent to
\begin{equation*}
 w\cdot(u\cdot v)=u\cdot(w\cdot v)+(w\cdot u)\cdot v.
\end{equation*}
Swapping the role of $w$ and $v$ and summing the two equations results into
\begin{equation}\label{eq: lin-ssPs-(1)}
 2\,u\cdot (v\cdot w)=0.
\end{equation}
On the other hand, using \eqref{eq: lin-ssPs-(1)} for the right-hand side of \eqref{eq: lin-ssPs-2sides} we obtain
\begin{equation*}
 0=w_k \iota_{v\cdot e_i}\iota_{u\cdot e_j}\Gamma^k_{ij}=w_k \iota_{v\cdot e_i}\iota_{u\cdot e_j}\Gamma^k_{ij}+\iota_{u\cdot (v\cdot w)}=(\dif \iota_w)(\nabla_{X_{\iota_u}} X_{\iota_v}),
\end{equation*}
 which is equivalent to $\nabla_{X_{\iota_u}}X_{\iota_v}=0$.
\end{proof}

For the next result, we use the Jacobiator for the product $\cdot$, that is,
\begin{equation*}
 \jac(u,v,w)=u\cdot (v\cdot w)+\cyc(u,v,w).
\end{equation*}

\begin{proposition}\label{prop: linear-sPs}
 A pair $(\lbrace\,,\rbrace,\nabla)$ consisting of a linear symmetric Poisson bracket and a torsion-free connection on $V^*$ is a symmetric Poisson structure if and only if, for every $u,v,w\in V$, we have
\begin{align*}
 \jac(u,v,w)&=0 & &\text{and} & (\dif\iota_w)(\nabla_{X_{\iota_u}}X_{\iota_v})+\cyc(u,v,w)&=0.
\end{align*}
\end{proposition}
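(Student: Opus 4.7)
The plan is to lift the symmetric Poisson criterion of Proposition~\ref{prop: sym-Poisson-bracket} to the linear setting and then separate polynomial degrees on $V^*$, following the template of the proof of Lemma~\ref{lem: ass-Jac-Jord}. Since exact differentials of linear functions locally generate $\Omega^1(V^*)$ and the constructions are polynomial in $u,v,w$, the pair $(\lbrace\,,\rbrace,\nabla)$ is a symmetric Poisson structure if and only if
\begin{equation*}
\iota_{\jac(u,v,w)}=(\dif\iota_w)(\pg{X_{\iota_u},X_{\iota_v}}_s)+\cyc(u,v,w)
\end{equation*}
holds for all $u,v,w\in V$, where the left-hand side was simplified via $\lbrace\iota_u,\iota_v\rbrace=\iota_{u\cdot v}$.

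Writing $\pg{X,Y}_s=\nabla_XY+\nabla_YX$ and $B(u,v,w):=(\dif\iota_w)(\nabla_{X_{\iota_u}}X_{\iota_v})$, the cyclic sum on the right unfolds into all six permutations of $(u,v,w)$ inside $B$. Using commutativity of $\cdot$ (forced by the symmetry of the bracket) and the identity $\Gamma^k_{ij}=\Gamma^k_{ji}$ from torsion-freeness, a direct check shows that the three odd permutations give the same contribution as the three even ones, so the right-hand side becomes $2(B(u,v,w)+\cyc(u,v,w))$. Invoking the coordinate computation already carried out in the proof of Lemma~\ref{lem: ass-Jac-Jord},
\begin{equation*}
B(u,v,w)=\iota_{u\cdot(v\cdot w)}+w_k\,\iota_{v\cdot e_i}\,\iota_{u\cdot e_j}\,\Gamma^k_{ij},
\end{equation*}
the criterion reduces to the single functional identity $\iota_{\jac(u,v,w)}+2F(u,v,w)=0$ on $V^*$, where
\begin{equation*}
F(u,v,w):=w_k\,\iota_{v\cdot e_i}\,\iota_{u\cdot e_j}\,\Gamma^k_{ij}+\cyc(u,v,w).
\end{equation*}

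The step I expect to be the delicate one is the final degree-separation, which I handle exactly as in Lemma~\ref{lem: ass-Jac-Jord}. Each summand in $F(u,v,w)$ is a product of two linear functions on $V^*$ that vanish at the origin times a smooth coefficient, so $F$ vanishes to second order at $0\in V^*$, whereas $\iota_{\jac(u,v,w)}$ is linear. Comparing $0$-jets and $1$-jets at the origin forces $\iota_{\jac(u,v,w)}=0$, i.e.\ $\jac(u,v,w)=0$, and the remainder of the identity then gives $F(u,v,w)=0$. Once $\jac(u,v,w)=0$, the linear contributions in $B(u,v,w)+\cyc(u,v,w)$ cancel cyclically, so $F(u,v,w)=0$ is exactly the stated condition $(\dif\iota_w)(\nabla_{X_{\iota_u}}X_{\iota_v})+\cyc(u,v,w)=0$. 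The converse follows by reading the argument backwards.
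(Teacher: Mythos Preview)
Your proof is correct and follows essentially the same approach as the paper: invoke Proposition~\ref{prop: sym-Poisson-bracket} on the linear functions $\iota_u,\iota_v,\iota_w$, use the coordinate computation from the proof of Lemma~\ref{lem: ass-Jac-Jord}, and separate the resulting identity into its linear part and the part vanishing to second order at the origin. The paper's own proof is just the one-line remark ``following the same approach as in the proof of Lemma~\ref{lem: ass-Jac-Jord}'', so you have simply spelled out the details (including the symmetry $B(u,v,w)+\cyc=B(v,u,w)+\cyc$ coming from torsion-freeness and commutativity of~$\cdot$) that the paper leaves implicit.
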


\begin{proof}
By Proposition \ref{prop: sym-Poisson-bracket}, $(\lbrace\,,\,\rbrace,\nabla)$ is symmetric Poisson if and only if 
\begin{equation*}
 \po{\iota_w,\po{\iota_u,\iota_v}}-(\dif\iota_w)(\pg{X_{\iota_u},X_{\iota_v}}_s)+\cyc (u,v,w)=0.
\end{equation*}
Following the same approach as in the proof of Lemma \ref{lem: ass-Jac-Jord} we get the result.
\end{proof}

The first condition in Proposition \ref{prop: linear-sPs}, $\jac(u,v,w)=0$, means that every linear symmetric Poisson structure on $V^*$ gives a \textit{Jacobi-Jordan algebra structure} on $V$.

\begin{definition}[\cite{JJaBur}]
 A commutative algebra $(\mathcal{J},\cdot)$ over a field $\mathbb{K}$ is called a \textbf{Jacobi-Jordan algebra} if, for every $u,v,w\in\mathcal{J}$, we have
 \begin{equation*}
 \jac(u,v,w)=0.
 \end{equation*}
\end{definition}

\begin{remark}
 As the name suggests, Jacobi-Jordan algebras are Jordan algebras. Indeed, the defining axiom for Jordan algebras is satisfied as for every $u,v\in \mathcal{J}$,
\begin{equation*}
 ((u\cdot u)\cdot v)\cdot u- (u\cdot u)\cdot (v\cdot u)=\jac(u, u\cdot v,u)-u\cdot\jac(u,u,v).
\end{equation*} 
Jacobi-Jordan algebras are also referred to as \textit{Jordan algebras of nil-index $3$}, \textit{Lie-Jordan algebras} or \textit{\mbox{mock-Lie} algebras}, see \cite{ZusSEML} and the references therein.
\end{remark}

The relation between linear symmetric Poisson structures and Jacobi-Jordan algebras becomes one-to-one once we fix the connection to be the Euclidean one.

\begin{theorem}\label{thm: JJA-1-to-1}
 The assignment $\iota_{u\cdot v}:=\lbrace \iota_u,\iota_v\rbrace$ gives a bijection
 \begin{equation*}
 \left\{\begin{array}{c}
\text{linear (strong) symmetric Poisson}\\
 \text{structures $(\lbrace\,,\rbrace,\nabla^\emph{Euc})$ on }V^*
 \end{array}\right\}\overset{\sim}{\longleftrightarrow}\left\{\begin{array}{c}
 \text{(associative) Jacobi-Jordan}\\
 \text{algebra structures $\cdot$ on }V
 \end{array}\right\}.
 \end{equation*}
\end{theorem}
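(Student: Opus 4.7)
The plan is to reduce everything to algebraic computations through Proposition \ref{prop: linear-sPs} and Lemma \ref{lem: ass-Jac-Jord}. First, I would observe that the assignment $\iota_{u\cdot v}:=\lbrace\iota_u,\iota_v\rbrace$ gives a bijection between linear symmetric Poisson brackets on $\cCi(V^*)$ and commutative bilinear products on $V$: symmetry of $\lbrace\,,\rbrace$ corresponds to commutativity of $\cdot$, while the Leibniz rule together with linearity forces the bracket to be determined by its restriction to $\mathcal{C}^\infty_\text{lin}(V^*)\cong V$ via \eqref{eq: bracket}. At this stage the bijection of underlying commutative algebra structures is automatic; what remains is to translate the two integrability conditions of Proposition \ref{prop: linear-sPs} (respectively Lemma \ref{lem: ass-Jac-Jord}) into the Jacobi-Jordan (respectively associative Jacobi-Jordan) axioms when $\nabla=\nabla^\text{Euc}$.

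The key computation is the following: in global linear coordinates $\lbrace x^i\rbrace$ the Christoffel symbols of $\nabla^\text{Euc}$ vanish, so the formula $X_{\iota_v}=\iota_{v\cdot e_i}\partial_{x^i}$ together with the calculation already carried out in the proof of Lemma \ref{lem: ass-Jac-Jord} yields
\begin{equation*}
(\dif\iota_w)(\nabla^\text{Euc}_{X_{\iota_u}}X_{\iota_v})=\iota_{u\cdot(v\cdot w)}.
\end{equation*}
For the non-strong case, the second condition of Proposition \ref{prop: linear-sPs} then becomes
\begin{equation*}
\iota_{u\cdot(v\cdot w)+v\cdot(w\cdot u)+w\cdot(u\cdot v)}=\iota_{\jac(u,v,w)}=0,
\end{equation*}
which is exactly the first condition $\jac(u,v,w)=0$. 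Both integrability conditions thus collapse to the single Jacobi-Jordan identity, giving the first half of the bijection.

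For the strong case, Lemma \ref{lem: ass-Jac-Jord} requires both $u\cdot(v\cdot w)=0$ and $\nabla^\text{Euc}_{X_{\iota_u}}X_{\iota_v}=0$. Since the differentials $\dif\iota_w$ with $w\in V$ span $T^*V^*$ pointwise, the second condition is equivalent to $(\dif\iota_w)(\nabla^\text{Euc}_{X_{\iota_u}}X_{\iota_v})=0$ for every $w$, which by the key computation says $\iota_{u\cdot(v\cdot w)}=0$ for every $w$, i.e., $u\cdot(v\cdot w)=0$ for all $w$. Hence both conditions reduce to the single requirement $u\cdot(v\cdot w)=0$ for all $u,v,w\in V$. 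It then remains to verify the purely algebraic claim that this condition is equivalent to $(V,\cdot)$ being an associative Jacobi-Jordan algebra: the forward direction is immediate, since vanishing of all triple products gives both axioms at once; for the converse, combining commutativity with associativity yields $u\cdot(v\cdot w)=v\cdot(w\cdot u)=w\cdot(u\cdot v)$, so the Jacobi-Jordan identity forces $3\,u\cdot(v\cdot w)=0$, hence $u\cdot(v\cdot w)=0$ since the ground field is $\R$.

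There is no serious obstacle in this argument: the substantive work is already packaged inside Proposition \ref{prop: linear-sPs} and Lemma \ref{lem: ass-Jac-Jord}, and the rest is bookkeeping. The one subtlety worth flagging is the characteristic assumption in the last step, which is automatic since we work over $\R$ but would need revisiting over fields of characteristic $3$.
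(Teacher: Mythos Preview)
Your proposal is correct and follows essentially the same route as the paper: reduce to Proposition~\ref{prop: linear-sPs} and Lemma~\ref{lem: ass-Jac-Jord}, compute $(\dif\iota_w)(\nabla^{\text{Euc}}_{X_{\iota_u}}X_{\iota_v})$ using vanishing Christoffel symbols, and then handle the strong case by the elementary observation that in a commutative associative algebra the triple product is totally symmetric (which is exactly the content of the paper's associator identity $u\cdot(v\cdot w)=\tfrac{1}{3}(\jac+\assoc+\assoc)$). Your key formula $(\dif\iota_w)(\nabla^{\text{Euc}}_{X_{\iota_u}}X_{\iota_v})=\iota_{u\cdot(v\cdot w)}$ agrees with the computation at the end of the proof of Lemma~\ref{lem: ass-Jac-Jord}; the paper's displayed version in \eqref{eq: 1-to-1-JJ} has an extra term (a harmless slip, since the cyclic sum still yields a nonzero multiple of $\iota_{\jac(u,v,w)}$ either way).
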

\begin{proof}
Given a Jacobi-Jordan structure $\cdot$ on $V$, we construct a linear symmetric Poisson bracket $\lbrace\,,\rbrace$ on $\cCi(V^*)$ by the relation $\lbrace \iota_u,\iota_v\rbrace:=\iota_{u\cdot v}$. For every $u,v,w\in V$, we have
 \begin{equation}\label{eq: 1-to-1-JJ}
 (\dif\iota_w)(\nabla^\text{Euc}_{X_{\iota_u}}X_{\iota_v})=\iota_{u\cdot(v\cdot w)+v\cdot(u\cdot w)}+2 w_k \iota_{v\cdot e_i}\iota_{u\cdot e_j}{\Gamma}^k_{ij}=\iota_{u\cdot(v\cdot w)+v\cdot(u\cdot w)},
\end{equation}
where we use that the Christoffel symbols of the Euclidean connection $\lbrace\Gamma^k_{ij}\}$ vanish identically. The cyclic permutation of \eqref{eq: 1-to-1-JJ} yields
 \begin{equation*}
 (\dif\iota_w)(\nabla^\text{Euc}_{X_{\iota_u}}X_{\iota_v})+\cyc(u,v,w)=2\iota_{\Jac(u,v,w)}=0,
 \end{equation*}
that is, by Proposition \ref{prop: linear-sPs}, $(\lbrace\,,\,\rbrace,\nabla^\text{Euc})$ is a linear symmetric Poisson structure. 

Conversely, by Proposition \ref{prop: linear-sPs}, the same relation $\iota_{u\cdot v}:=\lbrace \iota_u,\iota_v\rbrace$ maps a linear symmetric Poisson structure $(\lbrace\,,\rbrace,\nabla^\text{Euc})$ on $V^*$ to a Jacobi-Jordan algebra.

The claim about the strong case follows easily from Lemma \ref{lem: ass-Jac-Jord}, \eqref{eq: 1-to-1-JJ} and the fact that
 \begin{equation*}
 u\cdot(v\cdot w)=\frac{1}{3}(\jac(u,v,w)+\assoc(u,v,w)+\assoc(u,w,v)),
 \end{equation*}
 where $\assoc(u,v,w):=u\cdot(v\cdot w)-(u\cdot v)\cdot w$ is the \textit{associator}.
\end{proof}

\subsection{Examples of linear symmetric Poisson structures}
Theorem \ref{thm: JJA-1-to-1} together with Sections \ref{sec: geo-int}, \ref{sec: geo-int-strong} and \ref{sec: PW} give a clear geometrical interpretation of the notion of a real Jacobi-Jordan algebra. In \cite{JJaBur}, the classification of Jacobi-Jordan algebras for $\dim V\leq 6$ is given. Using Theorem \ref{thm: JJA-1-to-1}, we translate some of their results into the language of symmetric Poisson geometry. First we restate \cite[Prop. 3.1]{JJaBur}.

\begin{proposition}\label{prop: JJaBur0}
 All linear symmetric Poisson structures $(\vartheta,\nabla^\emph{Euc})$ on $V^*$ are strong if $\dim V\leq 4$. By a suitable choice of global linear coordinates $\lbrace x,y,z,t\rbrace$ on $V^*$, a non-trivial ($\vartheta\neq 0$) linear symmetric Poisson structure $(\vartheta,\nabla^\emph{Euc})$ on $V^*$ can be written in one of the form listed in Table \ref{tab: lin-sPs}.
\end{proposition}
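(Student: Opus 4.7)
The plan is to invoke Theorem \ref{thm: JJA-1-to-1}, which reduces the statement to two claims about real Jacobi-Jordan algebras of dimension at most $4$: first, that every such algebra satisfies the $3$-nilpotency identity $u\cdot(v\cdot w)=0$, and second, the classification of non-trivial isomorphism classes up to a choice of basis. Both are provided by \cite[Prop. 3.1]{JJaBur}, so the whole task essentially reduces to translating that result through the dictionary of Theorem \ref{thm: JJA-1-to-1}.

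For the assertion that every such $(\vartheta,\nabla^\emph{Euc})$ is strong, let $(V,\cdot)$ be the Jacobi-Jordan algebra associated to it. By \cite[Prop. 3.1]{JJaBur}, $(V,\cdot)$ is $3$-nilpotent, so the first hypothesis of Lemma \ref{lem: ass-Jac-Jord} is fulfilled. For the second hypothesis, the Christoffel symbols of $\nabla^\emph{Euc}$ vanish, so formula \eqref{eq: 1-to-1-JJ} from the proof of Theorem \ref{thm: JJA-1-to-1} simplifies to
$$(\dif\iota_w)(\nabla^\emph{Euc}_{X_{\iota_u}}X_{\iota_v})=\iota_{u\cdot(v\cdot w)+v\cdot(u\cdot w)}=0$$
for every $w\in V$, whence $\nabla^\emph{Euc}_{X_{\iota_u}}X_{\iota_v}=0$. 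Lemma \ref{lem: ass-Jac-Jord} then yields that $(\vartheta,\nabla^\emph{Euc})$ is strong.

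For the classification, take each non-trivial Jacobi-Jordan algebra from the list in \cite[Prop. 3.1]{JJaBur}, fix a basis $\{e_i\}$ of $V$ in which the product is in normal form, and let $\{x^i\}$ denote the dual linear coordinates on $V^*$. Since $\iota_{e_i}=x^i$, the associated bracket is determined by $\{x^i,x^j\}=\iota_{e_i\cdot e_j}$, equivalently by the symmetric bivector field
$$\vartheta=\frac{1}{2}\,\iota_{e_i\cdot e_j}\,\partial_{x^i}\odot\partial_{x^j}.$$
Expanding each $\iota_{e_i\cdot e_j}$ as a linear combination of $\{x^k\}$ and relabelling the coordinates $(x^1,x^2,x^3,x^4)$ as $(x,y,z,t)$ reproduces the entries of Table \ref{tab: lin-sPs}. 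The only remaining work is the bookkeeping needed to match the conventions between the two classifications; no new argument beyond Theorem \ref{thm: JJA-1-to-1} and the cited classification is required, and hence there is no conceptual obstacle.
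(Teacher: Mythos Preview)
Your proposal is correct and matches the paper's approach exactly: the paper does not give a standalone proof but simply presents Proposition \ref{prop: JJaBur0} as a restatement of \cite[Prop. 3.1]{JJaBur} via the dictionary of Theorem \ref{thm: JJA-1-to-1}. Your write-up is in fact more explicit than the paper, spelling out via Lemma \ref{lem: ass-Jac-Jord} and \eqref{eq: 1-to-1-JJ} why the associativity (equivalently, $3$-nilpotency) of the algebras in the cited list yields strongness.
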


\begin{table}[ht]
 \begin{center}
  \SetTblrInner{rowsep=0.6ex}
\begin{tblr}{width=\textwidth, colspec={m{0.07\textwidth,c}||m{0.31\textwidth,c}|m{0.14\textwidth,c}|m{0.345\textwidth,c}},cell{4}{1}={r=2}{c},cell{6}{1}={r=5}{c}}
$\dim V$ & $\vartheta$ making $(\vartheta,\nabla^\text{Euc})$ linear\break symmetric Poisson & leaf dimensions & non-trivial leaf\break metric signatures\\
\hline
\hline
$1$ & – &– &–\\
\hline
$2$&$y\,\partial_x\otimes \partial_x$ & $0,1$ &$(1,0),(0,1)$ \\
\hline
$3$ & $z\,\partial_x\otimes\partial_x$ & $0,1$ &$(1,0),(0,1)$\\
&$z\,(\partial_x\otimes\partial_x+\partial_y\otimes\partial_ y)$ &$0,2$ &$(2,0),(0,2)$\\
\hline
$4$ & $t\,\partial_x\otimes\partial_x$ & $0,1$&$(1,0),(0,1)$\\
&$t\,(\partial_x\otimes\partial_x+\partial_y\otimes\partial_y)$ &$0,2$ &$(2,0),(0,2)$\\
&$t\,\partial_x\otimes\partial_x+z\,\partial_y\otimes\partial_y$ & $0,1,2$ &$(1,0),(0,1),(2,0),(0,2),(1,1)$\\
&$t\,\partial_x\otimes\partial_x+z\,\partial_x\odot\partial_y$ & $0,1,2$ & $(1,0),(0,1),(1,1)$\\
&$t\,(\partial_x\otimes\partial_x+\partial_y\odot\partial_z)$ & $0,3$& $(2,1),(1,2)$\\

\end{tblr} 
\end{center}
 \caption{Non-trivial linear symmetric Poisson structures, $\dim V\leq 4$.}\label{tab: lin-sPs} 
 \end{table}

In particular, by Theorem \ref{thm:nabla-geodesic-part-of-inv-sym-Poisson}, every linear symmetric Poisson structure $(\vartheta,\nabla^\text{Euc})$ on $V^*$ with $\dim V\leq 4$ gives a totally geodesic partition, see Figure \ref{fig: JJ}. Note that the corresponding leaf metrics are always Riemannian (or negative definite) for $\dim V\leq 3$. If $\dim V=4$, this is no longer true as the last three symmetric Poisson structures in Table \ref{tab: lin-sPs} additionally possess Lorentzian leaves. 

\begin{figure}[ht]
\begin{center}
\includegraphics[height=4cm,keepaspectratio]{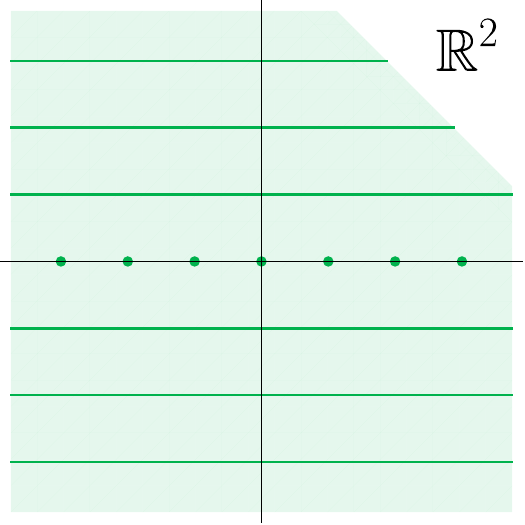}
\hspace{15pt}
\includegraphics[height=4cm,keepaspectratio]{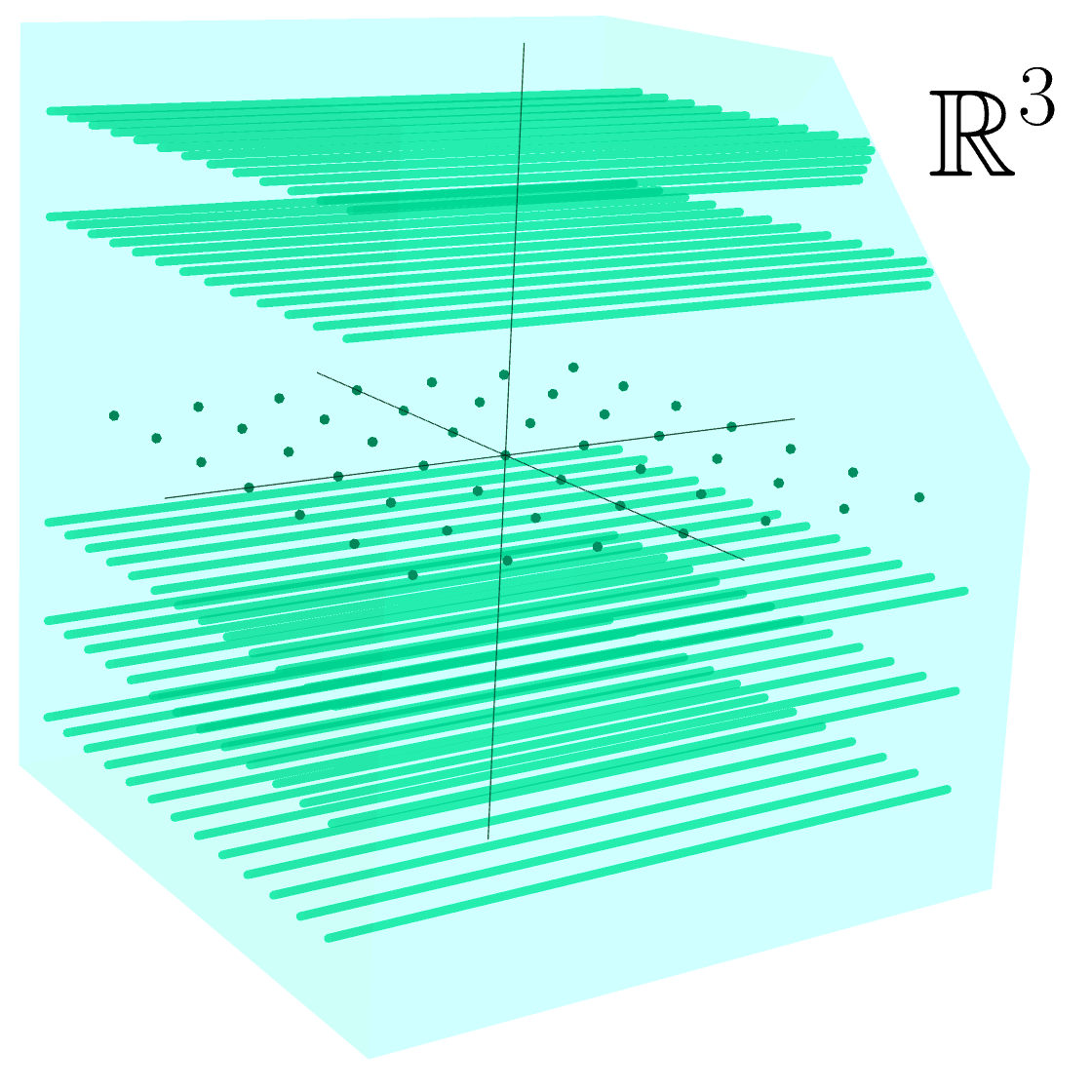}
\hspace{15pt}
\includegraphics[height=4cm,keepaspectratio]{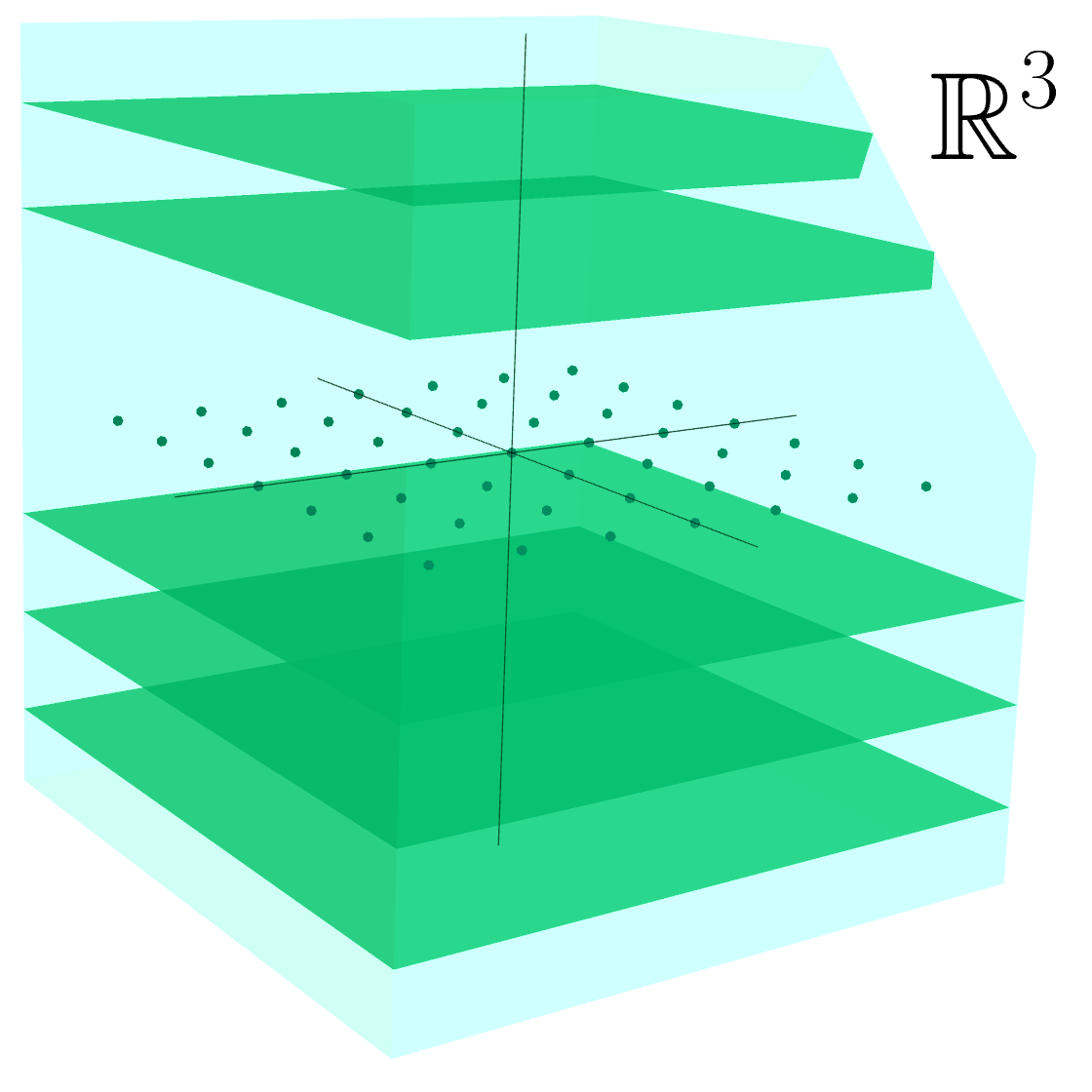}
\end{center}
\caption{Partitions induced by
Jacobi-Jordan algebras up to dimension $3$.}\label{fig: JJ}
\end{figure}

\begin{remark}
 Besides the cases listed in Proposition \ref{prop: JJaBur0}, there is always (for any dimension of $V$) the trivial linear strong symmetric Poisson structure $(0,\nabla^\text{Euc})$ on $V^*$. The corresponding totally geodesic partition is the partition into points. 
\end{remark}

We continue by rephrasing \cite[Prop. 4.1]{JJaBur} using symmetric Poisson geometry.

\begin{proposition}\label{prop: JJaBur}
 If $\dim V=5$, every non-strong linear symmetric Poisson structure $(\vartheta,\nabla^\emph{Euc})$ on $V^*$ can be written, by a suitable choice of linear global coordinates $\{ x^i\}$, as 
 \begin{equation*}\label{eq: R5}
 \vartheta:=x^2\partial_{x^1}\,\otimes\partial_{x^1}+x^5\,\partial_{x^1}\odot\partial_{x^4}-\frac{1}{2}x^3\,\partial_{x^1}\odot\partial_{x^5}+x^3\,\partial_{x^2}\odot\partial_{x^4}.
 \end{equation*}
\end{proposition}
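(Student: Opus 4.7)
The plan is to apply Theorem \ref{thm: JJA-1-to-1} to convert the problem into an algebraic classification question, and then invoke the known classification of Jacobi-Jordan algebras of dimension $5$. By that theorem, a linear symmetric Poisson structure $(\lbrace\,,\rbrace,\nabla^{\mathrm{Euc}})$ on $V^*$ is strong if and only if the corresponding Jacobi-Jordan structure on $V$ is associative; moreover, a linear change of coordinates on $V^*$ corresponds to a change of basis on $V$. Hence classifying non-strong linear symmetric Poisson structures $(\vartheta,\nabla^{\mathrm{Euc}})$ on $V^*$ up to linear changes of coordinates is equivalent to classifying non-associative real Jacobi-Jordan algebra structures on $V$ up to isomorphism.

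This latter classification in $\dim V=5$ is carried out in \cite[Prop.~4.1]{JJaBur}. I would cite that list, discard all associative entries (they describe exactly the strong case), and pick the unique non-associative isomorphism class. In a suitable basis $\{e_1,\ldots,e_5\}$ of $V$, its nonzero products of basis vectors are
\begin{align*}
e_1 \cdot e_1 &= e_2, & e_1 \cdot e_4 &= e_5, & e_1 \cdot e_5 &= -\tfrac{1}{2}\,e_3, & e_2 \cdot e_4 &= e_3,
\end{align*}
with all others vanishing. One quickly verifies that $\Jac$ vanishes on every ordered basis triple (the only non-immediate instance being $\Jac(e_1,e_1,e_4)=2\,e_1\cdot e_5+e_4\cdot e_2=-e_3+e_3=0$) and that the algebra is non-associative, since $(e_1\cdot e_1)\cdot e_4=e_3\neq -\tfrac{1}{2}e_3=e_1\cdot(e_1\cdot e_4)$.

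Finally, letting $\{x^1,\ldots,x^5\}$ be the linear coordinates on $V^*$ dual to $\{e_1,\ldots,e_5\}$, the identification $\vartheta(\dif x^i,\dif x^j)=\lbrace \iota_{e_i},\iota_{e_j}\rbrace=\iota_{e_i\cdot e_j}$ from Theorem \ref{thm: JJA-1-to-1} transcribes the above multiplication table directly into the announced bivector field; the factor $-\tfrac{1}{2}$ in front of $\partial_{x^1}\odot\partial_{x^5}$ arises because $(\partial_{x^1}\odot\partial_{x^5})(\dif x^1,\dif x^5)=1$, not $\tfrac{1}{2}$. The main (and essentially only) obstacle is the external algebraic classification of \cite{JJaBur}, which is used as a black box; the rest is straightforward bookkeeping to match conventions and to check the translation from the algebra basis to the coordinate expression of $\vartheta$ coefficient by coefficient.
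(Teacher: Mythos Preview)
Your proposal is correct and follows exactly the approach of the paper: the proposition is explicitly presented there as a rephrasing of \cite[Prop.~4.1]{JJaBur} via Theorem~\ref{thm: JJA-1-to-1}, with no further proof given. Your added verification of the multiplication table, the Jacobi identity, and non-associativity is sound bookkeeping that the paper leaves implicit.
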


The linear symmetric Poisson structure from Proposition \ref{prop: JJaBur} is not only interesting from the algebraic viewpoint (it corresponds to the lowest-dimensional non-associative Jacobi-Jordan algebra), but also from the geometric one.

\begin{example}\label{ex: R5}
 Consider the symmetric Poisson structure $(\vartheta, \nabla^\emph{Euc})$ on $\R^5$ from Proposition \ref{prop: JJaBur}. The characteristic module $\mathcal{F}_\vartheta$ is generated by
 \begin{align*}
 X_1&:=
x^2\partial_{x^1}+x^5\partial_{x^4}-\frac{1}{2}x^3\partial_{x^5}, & X_2&:=x^3\partial_{x^4},\\
X_3&:=x^5\partial_{x^1}+x^3\partial_{x^2}, & X_4&:=x^3\partial_{x^1}.
 \end{align*}
 The only non-trivial commutator is $ [X_1,X_3]=\frac{1}{2}X_4$, hence, although $(\vartheta,\nabla^\emph{Euc})$ is not strong, it is involutive. By Theorem \ref{thm:nabla-geodesic-part-of-inv-sym-Poisson}, we get a totally-geodesic partition of $\R^5$. By a straightforward calculation, one finds the characteristic distribution:
 \begin{equation*}
 \im\vartheta=\begin{cases}
 \{0\} & x^3=x^5=x^2=0,\\
 \spann\{\partial_{x^1}\} &x^3=x^5=0, \,x^2\neq 0,\\
 \spann\{ \partial_{x^1},\partial_{x^4}\} &x^3=0, \,x^5\neq 0,\\
 \spann\{\partial_{x^1},\partial_{x^2},\partial_{x^4},\partial_{x^5}\} &x^3\neq 0.
 \end{cases}
 \end{equation*}

 The $4$-dimensional leaves $N^{4}_t$ given by $x^3=t$ for a constant $t\in\R$, $t\neq 0$, acquire the leaf metrics:
 \begin{equation*}
 g_{N^{4}_t}=-\frac{2}{t}\dif x^1\odot\dif x^5+\frac{1}{t}\dif x^2\odot\dif x^4 + \frac{2x^5}{t^2}\dif x^2\odot \dif x^5-\frac{4x^2}{t^2}\dif x^5\otimes\dif x^5,
\end{equation*}
all of which have split signature $(2,2)$. The $2$-dimensional leaves that are the planes $N^{2}_{a,b}$: $x^3=0$, $x^5=a$, $x^2=b$ for constants $a,b\in\R$, $a\neq 0$, in $\R^5$ are equipped with the Lorentzian leaf metrics
\begin{equation*}
 g_{N^{2}_{a,b}}=\frac{1}{a}\dif x^1\odot\dif x^4-\frac{b}{a^2}\dif x^4\otimes \dif x^4.
\end{equation*}
Finally, the $1$-dimensional leaves that are the lines $N^{1}_{b,c}$ described by $x^3=x^5=0$ and $x^2=b$, $x^4=c$ for constants $b,c\in \R$, $b\neq0$, inherit the Riemannian (or negative-definite) metrics
\begin{equation*}
 g_{N^{1}_{b,c}}=\frac{1}{b}\dif x^1\otimes\dif x^1.
\end{equation*}
The rest of the leaves are just points in $\R^5$. 

The leaf connection on any $N^{4}_t$, which is the Euclidean one, is clearly not the Levi-Civita connection of $g_{N^{4}_t}$. On the other hand, for any other leaf, the leaf connection coincides with the Levi-Civita connection of the leaf metric. Thus, although all leaves are topologically cartesian spaces, the $4$-dimensional leaves are curved from a geometric viewpoint. Indeed, the Riemann curvature is given by
\begin{align*}
    R_t=\,&-\frac{1}{t^2}\dif x^1\wedge\dif x^2\otimes\Big(\frac{1}{2}\dif x^1\otimes\partial_{x^5}+\dif x^4\otimes\partial_{x^5}\Big)\\
    &-\frac{1}{t^2}\dif x^1\wedge\dif x^4\otimes\Big(\dif x^4\otimes\partial_{x^1}-\big(\frac{x^2}{t}\dif x^1+\frac{x^5}{t}\dif x^4-\frac{1}{2}\dif x^5\big)\otimes \partial_{x^2}+\frac{1}{2}\dif x^1\otimes\partial_{x^4}\\
    &+\big(\frac{x^5}{t}\dif x^1+\dif x^2\big)\otimes \partial_{x^5}\Big)+\frac{1}{t^2}\dif x^4\wedge\dif x^5\otimes\Big(\frac{1}{2}\dif x^1\otimes\partial_{x^2}+\dif x^4\otimes\partial_{x^5}\Big)
\end{align*}
By contractions, we get the Ricci and scalar curvatures:
\begin{align*}
    \ric_t&=-\frac{1}{t^2}(\dif x^1\otimes \dif x^1+2\dif x^4\otimes\dif x^4), & \mathcal{R}_t&=-\frac{x^2}{t^2}.
\end{align*}
\end{example}

Example \ref{ex: R5} is especially significant as it gives the positive answer to the question posed after Example \ref{ex: involutive-(s)sPs}. It is indeed an involutive symmetric Poisson structure that cannot be made strong by any choice of a torsion-free connection because it comes from a non-associative Jacobi-Jordan algebra, see Lemma \ref{lem: ass-Jac-Jord}.

\begin{example}
   By \cite[Prop. 5.1]{JJaBur}, there are exactly five non-associative Jacobi-Jordan algebras up to isomorphism in dimension $6$. The corresponding symmetric bivector fields are
   \begin{align*}
       \vartheta\coloneqq\,& x^2\partial_{x^1}\otimes\partial_{x^1}+x^5\partial_{x^1}\odot\partial_{x^3}+x^6\partial_{x^1}\odot\partial_{x^4}+x^4\partial_{x^3}\otimes\partial_{x^3}-\frac{1}{2}x^6\partial_{x^3}\odot\partial_{x^5}\\
       \vartheta_{\lambda,\mu}\coloneqq\,& x^2\partial_{x^1}\otimes\partial_{x^1}-\lambda x^2\partial_{x^1}\odot\partial_{x^3}+x^5\partial_{x^1}\odot\partial_{x^4}-\frac{1}{2}x^6\partial_{x^1}\odot\partial_{x^5}+x^6\partial_{x^2}\odot\partial_{x^4}\\
       &+\mu x^6\partial_{x^3}\otimes \partial_{x^3}+\lambda x^6\partial_{x^3}\odot\partial_{x^5}
   \end{align*}
   for $\lambda,\mu\in\{0,1\}$. By a direct computation, we find that the characteristic modules of $\vartheta$ and $\vartheta_{0,\mu}$ are involutive, whereas the characteristic modules of $\vartheta_{1,\mu}$ are not involutive. The pairs $(\vartheta_{1,0},\nabla^\emph{Euc})$ and $(\vartheta_{1,1},\nabla^\emph{Euc})$ provide the first  examples of singular non-involutive symmetric Poisson structures.
\end{example}


Since two Jacobi-Jordan algebras are isomorphic if and only if their corresponding symmetric bivector fields can be brought to the each other by a change of global linear coordinates, we get Jacobi-Jordan algebra invariants from the symmetric Poisson geometry: the rank of the characteristic distribution, the signature of the characteristic metric and many more. The translation of these invariants to the language of Jacobi-Jordan algebras and their possible applications is an interesting question.

\clearpage
\appendix

\section{\for{toc}{Complementary results on the symmetric Schouten bracket}\except{toc}{Complementary results on the\\symmetric Schouten bracket}}\label{app: compl-s-Schouten}
As an application of the symmetric Schouten bracket (Definition \ref{def: s-Schouten}), we show that it can be used to characterize Killing tensors of a \mbox{(pseudo-)Rie}mannian metric $g$ on $M$. Recall that the metric $g$ induces an isomorphism of unital graded algebras $\symall$ and $\Upsilon^\bullet(M)$ explicitly given, for $\mathcal{X}\in\symr$ and $X_j\in\mathfrak{X}(M)$, by 
\begin{equation*}
 g(\mathcal{X})(X_1\varlist X_r)=\mathcal{X}(g(X_1)\varlist g(X_r)).
\end{equation*}

\begin{proposition}
 Let $g$ be a \mbox{(pseudo-)Rie}mannian metric on $M$. A symmetric form $K\in\Upsilon^r(M)$ is a Killing tensor of $g$ if and only if
 \begin{equation*}
 [g^{-1},g^{-1}(K)]_s=0,
 \end{equation*}
 where $[\,\,,\,]_s$ is given by the Levi-Civita connection of $g$.
\end{proposition}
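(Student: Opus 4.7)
The plan is to compute $[g^{-1}, g^{-1}(K)]_s$ explicitly using the formula from Proposition \ref{prop: s-Schouten-explicit} and show that it equals, up to a constant, the image of $\nabla^s K$ under the musical isomorphism $g^{-1}: \Upsilon^{r+1}(M) \to \symr$ with $r$ replaced by $r+1$.

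The first step is to exploit the fact that $\nabla$ is the Levi-Civita connection, so $\nabla g = 0$, and consequently $\nabla g^{-1} = 0$ as well. Applying Proposition \ref{prop: s-Schouten-explicit}, the second summand in
\begin{equation*}
 [g^{-1}, g^{-1}(K)]_s = \tr\bigl(\iota_\star g^{-1} \odot \nabla_\star(g^{-1}(K)) + \nabla_\star g^{-1} \odot \iota_\star (g^{-1}(K))\bigr)
\end{equation*}
vanishes identically, leaving only the first one. Moreover, since $\nabla g^{-1} = 0$, the covariant derivative commutes with the isomorphism $g^{-1}$, so $\nabla_\star (g^{-1}(K)) = g^{-1}(\nabla_\star K)$.

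The second step is to identify the remaining trace with the image of $\nabla^s K$ under raising indices. Working in a local frame $\{e_i\}$ with dual coframe $\{e^i\}$ and writing $K = K_{i_1\ldots i_r}\, e^{i_1}\odot\cdots\odot e^{i_r}$, we have
\begin{equation*}
 \tr\bigl(\iota_\star g^{-1} \odot g^{-1}(\nabla_\star K)\bigr) = g^{k j}\, e_j \odot g^{-1}(\nabla_{e_k} K),
\end{equation*}
and in components the resulting $(r+1)$-vector is obtained by raising the index $k$ (via $g^{-1}$) of $\nabla_k K_{l_1\ldots l_r}$ and fully symmetrizing over all $r+1$ upper indices. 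Because $g^{-1}$ is symmetric, raising and symmetrizing commute, and since $\nabla^s K = (r+1)\sym(\nabla K)$, I expect to obtain the clean identity
\begin{equation*}
 [g^{-1}, g^{-1}(K)]_s = \frac{1}{r+1}\, g^{-1}(\nabla^s K).
\end{equation*}

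The third step is immediate: since $g^{-1}: \Upsilon^{r+1}(M) \to \mathfrak{X}^{r+1}_\text{sym}(M)$ is a linear isomorphism, the vanishing of $[g^{-1}, g^{-1}(K)]_s$ is equivalent to that of $\nabla^s K$, i.e., to $K$ being a Killing $r$-tensor of $g$.

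The main obstacle will be the book-keeping in the second step, where one must carefully check that the symmetrization built into the trace of $\iota_\star \odot \nabla_\star$ coincides with the full symmetrization $\sym$ used in the definition of $\nabla^s$, and that the combinatorial prefactor is $1/(r+1)$. An alternative would be to avoid coordinates altogether by using the derivation property of $[g^{-1},\,\cdot\,]_s$ together with polarization, reducing to the case $\mathcal{Y} = Y^{\odot r}$ and using that $[g^{-1}, Y]_s = L^s_Y g^{-1}$ vanishes precisely when the $1$-form $g(Y)$ is Killing, but the coordinate approach is more direct.
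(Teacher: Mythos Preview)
Your proposal is correct and follows essentially the same route as the paper: apply Proposition~\ref{prop: s-Schouten-explicit}, drop the term with $\nabla_\star g^{-1}$ since $g^{-1}$ is Levi-Civita parallel, commute $g^{-1}$ with $\nabla$ and with $\sym$, and conclude via the isomorphism $g^{-1}$. The only slip is the combinatorial constant: because $X\odot W=(r+1)\sym(X\otimes W)$ for $X$ a vector and $W$ a symmetric $r$-vector, the trace $\tr(\iota_\star g^{-1}\odot\nabla_\star g^{-1}(K))$ already equals $(r+1)\sym(\nabla_{g^{-1}(\cdot)}g^{-1}(K))$, so the $(r+1)$ cancels against the one in $\nabla^sK=(r+1)\sym(\nabla K)$ and one obtains $[g^{-1},g^{-1}(K)]_s=g^{-1}(\nabla^sK)$ with no prefactor. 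This does not affect the equivalence you are proving.
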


\begin{proof}
 By Proposition \ref{prop: s-Schouten-explicit} and the fact that $g$ is parallel with respect to its \mbox{Levi-Civ}ita connection $\lcn{g}$, we get
 \begin{equation*}
 [g^{-1},g^{-1}(K)]_s=\tr(\iota_\star g^{-1}\odot\lcn{g}_\star\, g^{-1}(K))=(r+1)\sym (\lcn{g}_{g^{-1}(\,\,)}g^{-1}(K)).
 \end{equation*}
 As $g^{-1}$ commutes with a covariant derivative given by its Levi-Civita connection and also with the symmetric projection, we find, by using the definition of the symmetric derivative \eqref{eq: sym-der-def}, that
 \begin{equation*}
 [g^{-1},g^{-1}(K)]_s=g^{-1}(\lcn{g}^s K).
 \end{equation*}
 As $g^{-1}$ is an isomorphism and $\kil^\bullet_{\lcn{g}}(M)=\ker(\lcn{g}^s)$, the result follows.
\end{proof}

\subsection{Derivation of the symmetric Schouten bracket}
We will now show that the symmetric Schouten bracket can be seen as a 'derived bracket'. Strictly speaking it is not a derived bracket in the sense of \cite{KosDB} as
\begin{equation*}
[\nabla^s,\,]:\en(\Upsilon^\bullet(M))\rightarrow\en(\Upsilon^\bullet (M)),
\end{equation*}
where $[\,\,,\,]$ stands for the commutator, does not square to zero, hence the theorems proved for a derived bracket in \cite{KosDB} do not necessarily apply here.

Given a vector space $V$ and $\mathcal{X}\in\Sym^r V$, the \textbf{contraction operator}
\begin{equation*}
 \iota_\mathcal{X}:\Sym^\bullet V^*\rightarrow\Sym^\bullet V^*.
\end{equation*}
 is a degree-$(-r)$ linear map determined by
\begin{equation*}
 \iota_{X_1\odot\ldots\odot X_r}:=\iota_{X_1}\circ\ldots\circ\iota_{X_r}
\end{equation*}
for $X_j\in V$. For $\lambda\in\R$, we define $\iota_\lambda$ as the multiplication by $\lambda$.

\begin{lemma}\label{lem: sym-Schouten-derivation}
 Let $V$ be a vector space. For every $\mathcal{X}\in\Sym^\bullet V$, $\alpha\in V^*$ and $\varphi\in\Sym^\bullet V^*$, we have
 \begin{equation}\label{eq: lem-sym-Schouten-derivation}
 \iota_{\mathcal{X}}(\alpha\odot\varphi)=\iota_{(\iota_\alpha\mathcal{X})}\varphi+\alpha\odot \iota_{\mathcal{X}}\varphi.
 \end{equation}
\end{lemma}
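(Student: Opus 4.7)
The strategy is to reduce to the case of decomposable $\mathcal{X}$ by $\R$-linearity of both sides in $\mathcal{X}$, and then proceed by induction on the degree $r$ of $\mathcal{X}\in\Sym^r V$. The two ingredients needed throughout are that $\iota_X:\Sym^\bullet V^*\to\Sym^\bullet V^*$ is a degree-$(-1)$ derivation for every $X\in V$, and dually that $\iota_\alpha:\Sym^\bullet V\to\Sym^\bullet V$ is a degree-$(-1)$ derivation for every $\alpha\in V^*$. Both are standard facts about contractions on the symmetric algebra.

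The base cases are easy. For $r=0$, that is $\mathcal{X}=\lambda\in\R$, we have $\iota_\lambda=\lambda\cdot\id$ and $\iota_\alpha\lambda=0$, so \eqref{eq: lem-sym-Schouten-derivation} reduces to $\lambda(\alpha\odot\varphi)=\alpha\odot(\lambda\varphi)$. For $r=1$, i.e. $\mathcal{X}=X\in V$, the derivation property of $\iota_X$ gives $\iota_X(\alpha\odot\varphi)=(\iota_X\alpha)\,\varphi+\alpha\odot\iota_X\varphi=\alpha(X)\varphi+\alpha\odot\iota_X\varphi$, and since $\iota_\alpha X=\alpha(X)\in\R$, we have $\iota_{\iota_\alpha X}\varphi=\alpha(X)\varphi$ as required.

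For the inductive step, assume the identity for all $\mathcal{X}$ of degree $r$, take $X\in V$, and consider $X\odot\mathcal{X}$ of degree $r+1$. By the defining property of the contraction on decomposables (extended $\R$-linearly in the subscript), we have $\iota_{X\odot\mathcal{X}}=\iota_X\circ\iota_{\mathcal{X}}$. Applying the inductive hypothesis followed by the derivation property of $\iota_X$ on $\Sym^\bullet V^*$ yields
\begin{align*}
\iota_{X\odot\mathcal{X}}(\alpha\odot\varphi)
&=\iota_X\bigl(\iota_{\iota_\alpha\mathcal{X}}\varphi+\alpha\odot\iota_{\mathcal{X}}\varphi\bigr)\\
&=\iota_{X\odot\iota_\alpha\mathcal{X}}\varphi+\alpha(X)\,\iota_{\mathcal{X}}\varphi+\alpha\odot\iota_{X\odot\mathcal{X}}\varphi.
\end{align*}
On the other hand, since $\iota_\alpha$ is a derivation of $\Sym^\bullet V$, we have $\iota_\alpha(X\odot\mathcal{X})=\alpha(X)\,\mathcal{X}+X\odot\iota_\alpha\mathcal{X}$, so by $\R$-linearity of the contraction in its subscript,
\begin{equation*}
\iota_{\iota_\alpha(X\odot\mathcal{X})}\varphi=\alpha(X)\,\iota_{\mathcal{X}}\varphi+\iota_{X\odot\iota_\alpha\mathcal{X}}\varphi.
\end{equation*}
Combining the two displays gives exactly \eqref{eq: lem-sym-Schouten-derivation} with $\mathcal{X}$ replaced by $X\odot\mathcal{X}$, closing the induction.

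There is no real obstacle here; the only thing to be careful about is to apply the two derivation properties on the correct algebra (one on $\Sym^\bullet V^*$ for $\iota_X$, the other on $\Sym^\bullet V$ for $\iota_\alpha$) and to use linearity of $\mathcal{X}\mapsto\iota_\mathcal{X}$ so as to identify $\iota_X\circ\iota_\mathcal{X}$ with $\iota_{X\odot\mathcal{X}}$ and $\iota_{\alpha(X)\mathcal{X}+X\odot\iota_\alpha\mathcal{X}}$ with $\alpha(X)\iota_\mathcal{X}+\iota_{X\odot\iota_\alpha\mathcal{X}}$.
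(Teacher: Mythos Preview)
Your proof is correct and follows essentially the same approach as the paper: induction on the degree of $\mathcal{X}$, using $\iota_{X\odot\mathcal{X}}=\iota_X\circ\iota_{\mathcal{X}}$, the derivation property of $\iota_X$ on $\Sym^\bullet V^*$, and the derivation property of $\iota_\alpha$ on $\Sym^\bullet V$. The only cosmetic difference is that you treat $r=1$ as a separate base case, whereas the paper starts the induction at $r=0$ and lets the inductive step absorb the $r=1$ case.
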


\begin{proof}
 We prove it by induction. For $\mathcal{X}\in\Sym^0 V$ a scalar, the identity is clearly satisfied. Fix $r\in\mathbb{N}$ and assume that \eqref{eq: lem-sym-Schouten-derivation} is true for every $\mathcal{X}\in\Sym^r V$. Using the fact that the contraction operator by a vector is a derivation of $(\Sym^\bullet V^*,\odot)$, we get, for every $X\in V$,
 \begin{align*}
 \iota_{X\odot\mathcal{X}}(\alpha\odot\varphi)
 &=\iota_X\iota_\mathcal{X}(\alpha\odot\varphi)=\iota_X(\iota_{(\iota_\alpha\mathcal{X})}\varphi+\alpha\odot\iota_\mathcal{X}\varphi)\\
 &=\iota_{X\odot(\iota_\alpha\mathcal{X})}\varphi+(\iota_X\alpha)\iota_{\mathcal{X}}\varphi+\alpha\odot\iota_X\iota_\mathcal{X}\varphi\\
 &=\iota_{(X\odot\iota_\alpha\mathcal{X}+\iota_\alpha X\odot\mathcal{X})}\varphi+\alpha\odot\iota_{X\odot\mathcal{X}}\varphi\\
 &=\iota_{\iota_\alpha(X\odot\mathcal{X})}\varphi+\alpha\odot\iota_{X\odot\mathcal{X}}\varphi.
 \end{align*}
 The result follows by linearity and the fact that $\Sym^\bullet V$ is generated by decomposable elements. 
\end{proof}

The contraction operator is naturally extended to $\symall$ and $\Upsilon^\bullet(M)$, which allows us to prove the next result.

\begin{proposition}
 Let $\nabla$ be a connection on $M$. For $\mathcal{X}$, $\mathcal{Y}\in\mathfrak{X}^\bullet_\emph{sym}(M)$, we have
 \begin{equation}\label{eq: sym-Schouten-derived}
 [[\iota_\mathcal{X},\nabla^s],\iota_{\mathcal{Y}}]=\iota_{[\mathcal{X},\mathcal{Y}]_s}.
 \end{equation}
\end{proposition}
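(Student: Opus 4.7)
The plan is to set $D(\mathcal{X}, \mathcal{Y}) := [[\iota_\mathcal{X}, \nabla^s], \iota_\mathcal{Y}]$ and prove $D(\mathcal{X}, \mathcal{Y}) = \iota_{[\mathcal{X}, \mathcal{Y}]_s}$ for $\mathcal{X} \in \symr$ and $\mathcal{Y} \in \mathfrak{X}^l_{\text{sym}}(M)$ by induction on the total degree $r + l$. A first observation I would record is that, since contractions commute via $\iota_\mathcal{X} \iota_\mathcal{Y} = \iota_{\mathcal{X} \odot \mathcal{Y}} = \iota_\mathcal{Y} \iota_\mathcal{X}$, a direct expansion of the double commutator yields $D(\mathcal{X}, \mathcal{Y}) = D(\mathcal{Y}, \mathcal{X})$, mirroring the commutativity of $[\,\,,\,]_s$. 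I can therefore always perform the inductive decomposition along the input of larger degree.

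For the base cases I would handle three subcases. If $\mathcal{X} = f$ is a scalar, the operator $[\iota_f, \nabla^s]$ reduces to $-\dif f \odot (\cdot)$, and combining this with Lemma \ref{lem: sym-Schouten-derivation} yields $D(f, \mathcal{Y}) = \iota_{\iota_{\dif f} \mathcal{Y}}$ for every $\mathcal{Y}$. On the other hand, Proposition \ref{prop: s-Schouten-explicit} readily gives $[f, \mathcal{Y}]_s = \iota_{\dif f} \mathcal{Y}$, so the two sides agree. For $\mathcal{X} = X$ and $\mathcal{Y} = Y$ both vector fields, the identity is exactly \eqref{eq: sym-bracket-derived} via $[\iota_X, \nabla^s] = L^s_X$ from \eqref{eq: sym-Lie-der}.

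The inductive step handles $r \geq 2$ (by the symmetry above, one may assume this without loss). Writing $\mathcal{X} = X \odot \mathcal{X}'$ with $X$ a vector field, I would use $\iota_{X \odot \mathcal{X}'} = \iota_X \iota_{\mathcal{X}'}$ and apply the Leibniz rule $[AB, C] = A[B, C] + [A, C]B$ twice, observing that $[\iota_X, \iota_\mathcal{Y}] = [\iota_{\mathcal{X}'}, \iota_\mathcal{Y}] = 0$. The outcome is the operator identity
\begin{equation*}
D(X \odot \mathcal{X}', \mathcal{Y}) = \iota_X \, D(\mathcal{X}', \mathcal{Y}) + D(X, \mathcal{Y}) \, \iota_{\mathcal{X}'}.
\end{equation*}
Each term on the right has strictly smaller total degree, so the inductive hypothesis turns the right-hand side into $\iota_{X \odot [\mathcal{X}', \mathcal{Y}]_s \,+\, [X, \mathcal{Y}]_s \odot \mathcal{X}'}$, which equals $\iota_{[X \odot \mathcal{X}', \mathcal{Y}]_s}$ by the derivation axiom of the symmetric Schouten bracket (Definition \ref{def: s-Schouten}).

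The main delicate point I anticipate is reconciling this reduction to decomposable tensors with $\cCi(M)$-linearity. Concretely, multiplying $\mathcal{X}$ by a function $f$ introduces a correction through $[\nabla^s, f] = \dif f \odot (\cdot)$, and one has to check that the resulting expression for $D(f \mathcal{X}, \mathcal{Y})$ matches the Schouten derivation rule $[f \mathcal{X}, \mathcal{Y}]_s = f [\mathcal{X}, \mathcal{Y}]_s + (\iota_{\dif f} \mathcal{Y}) \odot \mathcal{X}$. This amounts to a short commutator calculation again aided by Lemma \ref{lem: sym-Schouten-derivation}, and once it is in place the identity extends from pointwise decomposable generators to all of $\symall$.
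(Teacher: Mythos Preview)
Your proposal is correct and follows essentially the same approach as the paper: the same base cases (functions via Lemma~\ref{lem: sym-Schouten-derivation} and $[f,\mathcal{Y}]_s=\iota_{\dif f}\mathcal{Y}$, vector fields via \eqref{eq: sym-Lie-der} and \eqref{eq: sym-bracket-derived}), the same Leibniz decomposition of the double commutator under $\iota_{X\odot\mathcal{X}'}=\iota_X\iota_{\mathcal{X}'}$, and the same reduction to decomposable generators. The only cosmetic difference is that you organize the argument as a single induction on total degree $r+l$ exploiting the symmetry $D(\mathcal{X},\mathcal{Y})=D(\mathcal{Y},\mathcal{X})$, whereas the paper proceeds in two nested inductions (first on $l$ with $\mathcal{X}=X$ a vector field, then on $r$).
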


\begin{proof}
We start with the case when $\mathcal{X}=f$ is a function. Using the fact that $\nabla^s$ is a derivation of $\Upsilon^\bullet(M)$, for every $\mathcal{Y}\in\symall$, we have
\begin{align*}
 [[\iota_f,\nabla^s],\iota_\mathcal{Y}]\varphi&=f\nabla^s\iota_\mathcal{Y}\varphi-\nabla^s(f\iota_\mathcal{Y}\varphi)-\iota_\mathcal{Y}(f\nabla^s\varphi)+\iota_\mathcal{Y}\nabla^s(f\varphi)\\
 &=-\dif f\odot\iota_\mathcal{Y}\varphi+\iota_\mathcal{Y}(\dif f\odot \mathcal{Y}).
\end{align*}
It follows from Lemma \ref{lem: sym-Schouten-derivation} that
\begin{equation*}
 [[\iota_f,\nabla^s],\iota_\mathcal{Y}]=\iota_{(\iota_{\dif f}\mathcal{Y})},
\end{equation*}
and a straightforward calculation shows that $[f,\mathcal{Y}]_s=\iota_{\dif f}\mathcal{Y}$. 

The case $\mathcal{Y}=\cg$ is a function and $\mathcal{X}$ is arbitrary follows from the previous case and the fact that both sides of \eqref{eq: sym-Schouten-derived} are symmetric on $\mathcal{X}$ and $\mathcal{Y}$. 

 Assume now that, for arbitrary $l\in\mathbb{N}$, $X\in\mathfrak{X}(M)$ and $\mathcal{Y}\in\mathfrak{X}^l_\text{sym}(M)$, the identity
 \begin{equation*}
 [[\iota_X,\nabla^s],\iota_\mathcal{Y}]=\iota_{[X,\mathcal{Y}]_s}
 \end{equation*}
 holds. Using the definition of symmetric Lie derivative \eqref{eq: sym-Lie-der} and relation \eqref{eq: sym-bracket-derived}, for $Y\in\mathfrak{X}(M)$, we get
 \begin{align*}
 [[\iota_X,\nabla^s],\iota_{Y\odot\mathcal{Y}}]&=L^s_X\iota_Y\iota_\mathcal{Y}-\iota_Y\iota_\mathcal{Y}L^s_X=\iota_Y(L^s_X\iota_\mathcal{Y}-\iota_\mathcal{Y}L^s_X)+\iota_{\pg{X,Y}_s}\iota_\mathcal{Y}\\
 &=\iota_{Y\odot[X,\mathcal{Y}]_s+[X,Y]_s\odot\mathcal{Y}}=\iota_{[X,Y\odot\mathcal{Y}]_s}.
 \end{align*}
 It follows from induction, the linearity and the fact that $\symall$ is locally generated by decomposable elements that $[[\iota_X,\nabla^s],\iota_\mathcal{Y}]=\iota_{[X,\mathcal{Y}]_s}$ for every $\mathcal{Y}\in\symall$. 
 
 Finally, consider an arbitrary $r\in\mathbb{N}$ and assume that, for every $\mathcal{X}\in\mathfrak{X}^r_\text{sym}(M)$,
 \begin{equation*}
 [[\iota_\mathcal{X},\nabla^s],\iota_\mathcal{Y}]=\iota_{[\mathcal{X},\mathcal{Y}]_s}.
 \end{equation*}
 For every $X\in\mathfrak{X}(M)$, we have
 \begin{align*}
 [[\iota_{X\odot\mathcal{X}},\nabla^s],\iota_\mathcal{Y}]&=[\iota_X\iota_\mathcal{X}\nabla^s-\nabla^s\iota_X\iota_\mathcal{X},\iota_\mathcal{Y}]=[\iota_X[\iota_\mathcal{X},\nabla^s]+L^s_X\iota_\mathcal{X},\iota_\mathcal{Y}]\\
 &=\iota_X[\iota_{\mathcal{X}},\nabla^s]\iota_\mathcal{Y}-\iota_X\iota_\mathcal{Y}[\iota_\mathcal{X},\nabla^s]+L^s_X\iota_\mathcal{X}\iota_\mathcal{Y}-\iota_\mathcal{Y}L^s_X\iota_\mathcal{X}\\
 &=\iota_{X}\iota_{[\mathcal{X},\mathcal{Y}]_s}+(L^s_X\iota_\mathcal{Y}-\iota_\mathcal{Y}L^s_X)\iota_\mathcal{X}=\iota_{X\odot[\mathcal{X},\mathcal{Y}]_s}+\iota_{[X,\mathcal{Y}]_s\odot\mathcal{X}}\\
 &=\iota_{[X\odot\mathcal{X},\mathcal{Y}]_s}
 \end{align*}
 and the result follows by induction on $r$.
\end{proof}

\section{Some proofs of Sections \ref{sec: geo-int} and \ref{sec: geo-int-strong}}\label{app: some-proofs}
For the sake of completeness, we present several technical proofs for statements whose analogues are also valid in classical Poisson geometry. However, we were unable to find these proofs in the existing literature.

\begin{customlem}{\ref{lem: loc}}
 For every $\vartheta\in\mathfrak{X}^2_\emph{sym}(M)$, the characteristic module $\mathcal{F}_\vartheta$ is local.
\end{customlem}
\begin{proof}
Consider $X\in\mathfrak{X}(M)$ such that is locally in $\mathcal{F}_\vartheta$. We thus have an open cover $\{ U_{m'}\}_{m'\in M}$ of $M$ and a collection $\{ \alpha^{(m')}\}_{m'\in M}\subseteq \Omega^1(M)$ such that
 \begin{equation*}
 \rest{X}{U_{m'}}=\vartheta(\alpha^{(m')}\hspace{-3pt}\rest{}{U_{m'}}).
 \end{equation*}
 We choose an arbitrary partition of unity $\{ f^{(m')}\}_{m'\in M}\subseteq \cCi(M)$ for the cover $\{ U_{m'}\}_{m'\in M}$ and define $\alpha\in\Omega^1(M)$ by the formula
 \begin{equation*}
 \alpha:=\sum_{m'\in M}f^{(m')}\alpha^{(m')}.
 \end{equation*}
 For every $m\in M$, we find
 \begin{align*}
 \vartheta(\alpha_m)&=\vartheta\Big(\sum_{m'\in M}f^{(m')}(m)\alpha^{(m')}_m\Big)=\sum_{m'\in M,\, m\in U_{m'}}f^{(m')}(m)\vartheta(\alpha^{(m')}_m)\\
 &=\sum_{m'\in M,\, m\in U_{m'}}f^{(m')}(m)X_m=X_m.
 \end{align*}
\end{proof}

\begin{customlem}{\ref{lem: reg-d}}
 If $\vartheta\in\mathfrak{X}^2_\emph{sym}(M)$ is regular, we have $\Gamma(\im\vartheta)=\mathcal{F}_\vartheta$. 
\end{customlem}

\begin{proof}
 Since $\mathcal{F}_\vartheta\subseteq\Gamma(\im\vartheta)$, it is enough to show that every $Y\in\Gamma(\im\vartheta)$ is in $\mathcal{F}_\vartheta$. For a given point $m\in M$, we find a coordinate chart $(U,\{ x^i\})$ around it. It is clear that the gradient vector fields $\{ X_{x^i}\}$ generate the space of local sections $\Gamma(U,\im\vartheta)$ as a $\cCi(U)$-module. Therefore, we have a (not unique) collection $\{ f^i\}\subseteq\cCi(U)$ such that
 \begin{equation*}
 \rest{Y}{U}=f^iX_{x^i}=\vartheta( f^i\dif x^i).
 \end{equation*}
 We define $\alpha\in\Omega^1(U)$ by $\alpha:=f^i\dif x^i$. Namely, we have $\rest{Y}{U}=\vartheta(\alpha)$. As we can find a neighbourhood of $m$ (possibly smaller than $U$) such that there is $\hat{\alpha}\in\Omega^1(M)$ that agrees with $\alpha$ on that neighborhood, the result follows from the fact that $\mathcal{F}_\vartheta$ is, by Lemma \ref{lem: loc}, local.
\end{proof}

\begin{remark}
 We are presenting the results in the logical order for this appendix. Note that Lemma \ref{lem: loc} has a stand-alone proof.
\end{remark}

\begin{customprop}{\ref{prop: family-metrics}}
 Let $(\Delta,g)$ be a \mbox{(pseudo-)Rie}mannian vector subspace of a real $n$-dimensional vector space $V$. There is a unique $\vartheta\in\Sym^2V$ such that $\im\vartheta=\Delta$ and, for $\alpha,\beta\in V^*$,
 \begin{equation*}
 \vartheta(\alpha,\beta)=g(\vartheta(\alpha),\vartheta(\beta)).
 \end{equation*}
 Moreover, if $g$ is of signature $(r,s)$, the symmetric bivector $\vartheta$ is of signature $(r,s,n-(r+s))$. 
\end{customprop}

\begin{proof}
 We have the canonical vector space isomorphism $V^*/\Ann \Delta\cong \Delta^*$. Using the quotient map $q:V^*\rightarrow\Delta^*$, we define
 \begin{equation*}
 \vartheta(\alpha,\beta):=g^{-1}(q(\alpha),q(\beta))
 \end{equation*}
 for any $\alpha,\beta\in V^*$. Clearly, we have $\vartheta\in\Sym^2V$ and $\vartheta=q^t\circ g^{-1}\circ q$ as a map $V^*\rightarrow V$. Explicitly, $q^t$ is the inclusion and $q=\rest{ }{\Delta}$ is the restriction map. Therefore,
 \begin{equation*}
 \im\vartheta=\im(q^t\circ g^{-1}\circ q)=\Delta
 \end{equation*}
 and, for $\alpha,\beta\in V^*$,
 \begin{equation*}
 \vartheta(\alpha,\beta)=g^{-1}(q(\alpha),q(\beta))=g(g^{-1}(q(\alpha)),g^{-1}(q(\beta)))=g(\vartheta(\alpha),\vartheta(\beta)).
 \end{equation*}
 Consider now $\varkappa\in\Sym^2V$ such that $\vartheta+\varkappa$ also satisfies the two properties. Since $\im\vartheta+\im\varkappa=\Delta$, we have that $\im\varkappa\leq\Delta=\im\vartheta$. On the other hand,
 \begin{align*}
 (\vartheta+\varkappa)(\alpha,\beta)&=g((\vartheta+\varkappa)(\alpha),(\vartheta+\varkappa)(\beta))\\
 &=g((\vartheta+\varkappa)(\alpha),\varkappa(\beta))+g(\vartheta(\alpha),\vartheta(\beta))+g(\varkappa(\alpha),\vartheta(\beta)).
 \end{align*}
 As there is $\alpha'\in V^*$ such that $\varkappa(\alpha)=\vartheta(\alpha')$, we get
 \begin{equation*}
 g(\vartheta(\alpha)+\varkappa(\alpha),\varkappa(\beta))=0,
 \end{equation*}
 that is, $\vartheta(\alpha)+\varkappa(\alpha)\in\im\varkappa^\perp\leq \Delta$ for every $\alpha\in V^*$, which means $\Delta=\im(\vartheta+\varkappa)=\im\varkappa^\perp$. So $\im\varkappa=\lbrace 0\rbrace$, that is $\varkappa=0$, and the uniqueness of $\vartheta$ follows.
\end{proof}

\begin{customlem}{\ref{lem: loc-fin}}
 For every $\vartheta\in\mathfrak{X}^2_\emph{sym}(M)$, the characteristic module $\mathcal{F}_\vartheta$ is locally finitely generated. 
\end{customlem}

\begin{proof}
 For a point $m\in M$, we choose a coordinate chart $(U,\{ x^i\})$ for $M$. Given $Y\in\Gamma_c(U,\mathcal{F}_\vartheta)$, we take a bump $f_Y\in\cCi(M)$ such that $\supp f_Y\subseteq U$ and $\rest{f_Y}{\supp Y}=1$, hence $Y=f_YY\in\mathcal{F}_\vartheta\subseteq\mathfrak{X}(M)$ and we find $\alpha\in\Omega^1(M)$ such that $Y=\vartheta(\alpha)$. As there is the unique collection $\{ \alpha_i\}\subseteq\cCi(U)$ such that $\rest{\alpha}{U}=\alpha_i\dif x^i$, we have
 \begin{equation*}
 Y=f_YY=f_Y\alpha_i\vartheta(\dif x^i)=(f_Y\alpha_i)X_{x^i}.
 \end{equation*}
 Therefore, the gradient vector fields $\{ X_{x^i}\}_{i=1}^n\subseteq \Gamma(U,\mathcal{F}_\vartheta)$ generate the space $\Gamma_c(U,\mathcal{F}_\vartheta)$ as a $\cCi_c(U)$-module.
\end{proof}

\section{Lie algebroids and symmetric bivector fields}\label{app: Lie algebroids}
For a symmetric bivector field $\vartheta\in\symbi$ and a connection $\nabla$ on $M$, we have the bracket $[\,\,,\,]:\Omega^1(M)\times\Omega^1(M)\rightarrow\Omega^1(M)$ defined, for $\alpha,\beta\in\Omega^1(M)$, by
\begin{equation}\label{eq: ssPs-Lie}
 [\alpha,\beta]:=\nabla_{\vartheta(\alpha)}\beta-\nabla_{\vartheta(\beta)}\alpha.
\end{equation}
We explore its properties of in this section. First, we recall some terminology.

\begin{definition}
 An \textbf{almost Lie algebroid} over $M$ is a triple $(E,\rho,[\,\,,\,])$, consisting of a vector bundle $E\rightarrow M$, a vector bundle morphism $\rho:E\rightarrow TM$ over the identity on $M$ and an $\R$-bilinear map $[\,\,,\,]:\Gamma(E)\times\Gamma(E)\rightarrow\Gamma(E)$, such that, for $a,b\in\Gamma(E)$ and $f\in\cCi(M)$,
 \begin{align*}
 [a,b]&=-[b,a], & [a,fb]&=(\rho(a)f)b+f[a,b].
 \end{align*}
 If in addition $\rho:(E,[\,\,,\,])\rightarrow(TM,[\,\,,\,])$ is an algebra morphism, $(E,\rho,[\,\,,\,])$ is called a \textbf{pre-Lie algebroid}. If $(\Gamma(E),[\,\,,\,])$ is, in addition, a Lie algebra, $(E,\rho,[\,\,,\,])$ is called a \textbf{Lie algebroid}.
\end{definition}

It is easy to see that the bracket $[\,\,,\,]$, defined by \eqref{eq: ssPs-Lie}, makes $(T^*M,\vartheta,[\,\,,\,])$ an almost Lie algebroid. By Proposition \ref{prop: ssPs-involutivity}, we have that $(T^*M,\vartheta,[\,\,,\,])$ is a pre-Lie algebroid if $(\vartheta,\nabla)$ is a strong symmetric Poisson structure.

\begin{proposition}\label{prop: Lie-algebroid}
 Let $(\vartheta,\nabla)$ be a strong symmetric Poisson structure on $M$. The triple $(T^*M,\vartheta,[\,\,,\,])$ is a Lie algebroid if and only if, for $\alpha,\beta,\eta\in\Omega^1(M)$,
 \begin{equation}\label{eq: Bianchi}
 R_\nabla(\vartheta(\alpha),\vartheta(\beta))\eta+\cyc(\alpha,\beta,\eta)=0,
 \end{equation}
 where $R_\nabla\in\Omega^2(M,\en TM)$ is the Riemann curvature. Namely, a strong symmetric Poisson structure $(\vartheta,\nabla)$ gives a Lie algebroid if $\nabla$ is flat.
\end{proposition}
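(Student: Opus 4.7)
Since $(\vartheta,\nabla)$ is a strong symmetric Poisson structure, we already know from Proposition \ref{prop: ssPs-involutivity} (and the formula exhibited in its proof) that $(T^*M,\vartheta,[\,\,,\,])$ is a pre-Lie algebroid, that is, $\vartheta[\alpha,\beta]=[\vartheta(\alpha),\vartheta(\beta)]$ for all $\alpha,\beta\in\Omega^1(M)$. Hence, the only remaining condition for it to be a Lie algebroid is the Jacobi identity for the bracket on $\Omega^1(M)$. The plan is to compute the Jacobiator
\[
 \jac(\alpha,\beta,\eta)=[[\alpha,\beta],\eta]+\cyc(\alpha,\beta,\eta)
\]
explicitly in terms of $\nabla$ and recognize the obstruction as the cyclic curvature expression \eqref{eq: Bianchi}.

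Writing $X_\alpha:=\vartheta(\alpha)$ and using $\vartheta[\alpha,\beta]=[X_\alpha,X_\beta]$, the definition \eqref{eq: ssPs-Lie} gives
\[
 [[\alpha,\beta],\eta]=\nabla_{[X_\alpha,X_\beta]}\eta-\nabla_{X_\eta}\nabla_{X_\alpha}\beta+\nabla_{X_\eta}\nabla_{X_\beta}\alpha.
\]
Applying the definition of the Riemann curvature
\[
 \nabla_{[X_\alpha,X_\beta]}\eta=\nabla_{X_\alpha}\nabla_{X_\beta}\eta-\nabla_{X_\beta}\nabla_{X_\alpha}\eta-R_\nabla(X_\alpha,X_\beta)\eta
\]
and summing cyclically on $(\alpha,\beta,\eta)$, the key observation is that \emph{every} double covariant derivative that appears cancels in pairs: each term of the form $\nabla_{X_\xi}\nabla_{X_\zeta}\mu$ (for $\mu$ one of the three one-forms) arises once from the expansion of $\nabla_{[X_\alpha,X_\beta]}\eta$ and once with the opposite sign from the terms $-\nabla_{X_\eta}\nabla_{X_\alpha}\beta+\nabla_{X_\eta}\nabla_{X_\beta}\alpha$ after cyclic permutation. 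Consequently, the cyclic sum reduces to
\[
 \jac(\alpha,\beta,\eta)=-\bigl(R_\nabla(\vartheta(\alpha),\vartheta(\beta))\eta+\cyc(\alpha,\beta,\eta)\bigr).
\]

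This identity immediately yields the statement: the Jacobi identity holds for $[\,\,,\,]$ on $\Omega^1(M)$ if and only if \eqref{eq: Bianchi} is satisfied, and in particular it holds whenever $\nabla$ is flat. The only point that requires some care is the bookkeeping of the twelve double-derivative terms in the cyclic sum; I would lay them out in a table indexed by the ordered pair $(X_\xi,X_\zeta)$ and the argument $\mu$, so that the cancellations are manifestly $\cCi(M)$-linear on each argument and it is clear that no torsion terms appear (they would, but torsion-freeness of $\nabla$ together with the pre-Lie identity absorbs them). This bookkeeping is the main, though purely routine, obstacle; everything else is dictated by the structure we already have.
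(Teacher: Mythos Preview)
Your proof is correct and follows essentially the same route as the paper: both expand the Jacobiator of the bracket \eqref{eq: ssPs-Lie} using the pre-Lie identity $\vartheta[\alpha,\beta]=[\vartheta(\alpha),\vartheta(\beta)]$ from Proposition \ref{prop: ssPs-involutivity}, then recognize the residual terms as the cyclic curvature expression. The only cosmetic difference is that the paper expands $[\alpha,[\beta,\eta]]$ whereas you expand $[[\alpha,\beta],\eta]$ and insert the curvature identity for $\nabla_{[X_\alpha,X_\beta]}\eta$ explicitly; your cancellation argument is more detailed than the paper's, which simply says ``cyclic permutation gives the result.'' The closing remark about torsion terms is unnecessary: torsion never enters the curvature identity you used, and torsion-freeness was already consumed in establishing the pre-Lie property, so you can safely drop that hedge.
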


\begin{proof}
 As we already know that for a strong symmetric Poisson structure $(\vartheta,\nabla)$, the triple $(T^*M,\vartheta,[\,\,,\,])$ is a pre-Lie algebroid. It becomes a Lie algebroid if and only if the Jacobi identity is satisfied. For $\alpha,\beta,\eta\in\Omega^1(M)$, we get
 \begin{equation*}
 [\alpha,[\beta,\eta]]=\nabla_{\vartheta(\alpha)}\nabla_{\vartheta(\beta)}\eta-\nabla_{\vartheta(\beta)}\nabla_{\vartheta(\alpha)}\eta-\nabla_{\vartheta([\beta,\eta])}\alpha.
 \end{equation*}
 As $(\vartheta,\nabla)$ is a strong symmetric Poisson structure, we get by Proposition \ref{prop: ssPs-involutivity} that
 \begin{equation*}
 [\alpha,[\beta,\eta]]=\nabla_{\vartheta(\alpha)}\nabla_{\vartheta(\beta)}\eta-\nabla_{\vartheta(\beta)}\nabla_{\vartheta(\alpha)}\eta-\nabla_{[\vartheta(\beta),\vartheta(\eta)]}\alpha.
 \end{equation*}
 The cyclic permutation of the above equation gives the result. 
\end{proof}

If $g$ is a (pseudo-)Riemannian metric on $M$, the condition \eqref{eq: Bianchi} is equivalent to
\begin{equation*}
 R_\nabla(X,Y)g(Z)+\cyc(X,Y,Z)=0,
\end{equation*}
for $X,Y,Z\in\mathfrak{X}(M)$. If $\nabla=\lcn{g}$, that is, $\nabla$ is the Levi-Civita connection of $g$, we have that $g$ and $\lcn{g}$ commute, hence \eqref{eq: Bianchi} becomes
\begin{equation*}
 g(R_{\lcn{g}}(X,Y)Z+\cyc(X,Y,Z))=0.
\end{equation*}
The algebraic Bianchi identity, $R_\nabla(X,Y)Z+\cyc(X,Y,Z)=0$, is satisfied for any torsion-free connection $\nabla$ on $M$ (by the Jacobi identity of the usual Lie bracket), so
$(T^*M, g^{-1},[\,\,,\,])$ is a Lie algebroid. Moreover, $g^{-1}:T^*M\rightarrow TM$ is a Lie algebroid isomorphism between $(T^*M, g^{-1},[\,\,,\,])$ and the tangent Lie algebroid.

\begin{remark}
Note that flatness is a very strong constraint, for instance, for closed surfaces is equivalent to the vanishing of the Euler class. After we had developed symmetric Poisson geometry, we found that pairs consisting of a flat torsion-free connection $\nabla$ and a symmetric bivector $\vartheta$ had been considered before in \cite{pseudo-Hessian-0} to describe degenerations of so-called pseudo-Hessian structures (pairs $(g,\nabla)$ consisting of a (pseudo-)Riemannian metric and a flat torsion-free connection satisfying  
$(\nabla_Xg)(Y,Z)=(\nabla_Yg)(X,Z)$). A pair $(\vartheta,\nabla)$  satisfying the analogous integrability condition $(\nabla_{\vartheta(\alpha)}\vartheta)(\beta,\eta)=(\nabla_{\vartheta(\beta)}\vartheta)(\alpha,\eta)$ is called a contravariant pseudo-Hessian or, currently, a Koszul-Vinberg structure. 
Accordingly, the foliation associated to such a structure consists of pseudo-Hessian leaves, whereas linear structures correspond to commutative associative algebras. By Proposition \ref{prop: sym-Poisson-bracket} and Corollary \ref{cor: ssPs-integrability}, the intersection of symmetric Poisson and Koszul-Vinberg structures are precisely strong symmetric Poisson structures with a flat connection. 
We believe that symmetric and strong symmetric Poisson structures offer a more general and fundamental approach, rooted in symmetric Cartan calculus and analogous to Poisson geometry, as we summarize in the next table.   
\end{remark}










%

\newgeometry{left=1.85cm, right=1.85cm, bottom=1.5cm}
\section{Comparison with Poisson geometry}
We show in this table a comparison between Poisson and (strong) symmetric Poisson geometry.
\begin{center}
\noindent\SetTblrInner{rowsep=7pt}
\begin{tblr}{width=\linewidth, colspec={|m{0.375\linewidth,c}||m{0.225\linewidth,c}|m{0.3\linewidth,c}|}, cell{2}{1}={c=3}{c},cell{3}{2}={c=2}{c},cell{4}{2}={c=2}{c},cell{5}{2}={c=2}{c},cell{6}{2}={c=2}{c},cell{7}{1}={c=3}{c},cell{8}{2}={c=2}{c},cell{9}{2}={c=2}{c},cell{10}{2}={c=2}{c},cell{11}{2}={c=2}{c},cell{12}{1}={c=3}{c},cell{14}{2}={c=2}{c},cell{16}{2}={c=2}{c},cell{17}{1}={c=3}{c},cell{19}{1}={c=3}{c}}
\hline
\textbf{Poisson geometry} & \textbf{Symmetric\break Poisson geometry} & \textbf{Strong symmetric\break Poisson geometry}\\
\hline\hline
algebraic features\\
\hline
skew-symmetric bivector fields\break $\pi\in\mathfrak{X}^2(M)$ & symmetric bivector fields\break $\vartheta\in\mathfrak{X}^2_\text{sym}(M)$\\
 Hamiltonian vector fields\break $\Ham:=\pi\circ\dif$ & gradient vector fields\break $\grad:=\vartheta\circ \dif$\\
$\{ f,\cg\}=-\{ \cg,f\}$ & $\{ f,\cg\}=\{ \cg,f\}$\\
$\{ f,\cg h\}=\{ f,\cg\} h+\cg\{ f,h\}$ & $\{ f,\cg h\}=\{ f,\cg\} h+\cg\{ f,h\}$\\
\hline\hline
differential calculi\\
\hline
classical Cartan calculus &symmetric Cartan calculus\\
canonical & depending on the choice of $\nabla$\\
Schouten bracket $[\,\,,\,]$ & symmetric Schouten bracket $[\,\,,\,]_s$\\
graded anti-commutative & commutative\\
\hline\hline
integrability conditions\\
\hline
$[\pi,\pi]=0$ & $[\vartheta,\vartheta]_s=0$ & $\nabla_{\grad f}\,\vartheta=0$\\
$\Jac(f,\cg,h)=0$ & $\Jac(f,\cg,h)=\dif h([\grad f,\grad \cg]_s)+\cyc(f,\cg,h)$\\
 $\Ham\{ f,\cg\}=[\Ham f,\Ham \cg]$ & – & $\grad\{ f,\cg\}=[\grad f,\grad \cg]_s$\\
 $(\nabla_{\pi(\alpha)}\pi)(\beta,\eta)+\cyc(\alpha,\beta,\eta)=0$ & $(\nabla_{\vartheta(\alpha)}\vartheta)(\beta,\eta)+\cyc(\alpha,\beta,\eta)=0$\\
 \hline\hline
non-degenerate structures\\
\hline
symplectic\break structures & non-degenerate\break Killing $2$-tensors & (pseudo-)Riemannian\break metrics\\
\hline\hline
linear structures\\
\hline
Lie\break algebras & Jacobi-Jordan\break algebras & associative Jacobi-Jordan\break algebras\\
\hline
\end{tblr}
\end{center}

\clearpage

\begin{center}
\noindent\SetTblrInner{rowsep=7pt}
\begin{tblr}{width=\linewidth, colspec={|m{0.35\linewidth,c}||m{0.25\linewidth,c}|m{0.3\linewidth,c}|}, cell{2}{1}={c=3}{c},cell{5}{1}={c=3}{c},cell{6}{2}={c=2}{c},cell{7}{2}={c=2}{c}}
\hline
\textbf{Poisson geometry} & \textbf{Symmetric\break Poisson geometry} & \textbf{Strong symmetric\break Poisson geometry}\\
\hline\hline
geometric interpretation\\
\hline
partitions & locally geodesically invariant distributions & totally geodesic partitions\\
symplectic structure\break on leaves & constant square of  speed of $\vartheta$-admissible geodesics for characteristic metric & (pseudo-)Riemannian metric on leaves, whose Levi-Civita connection is the restrictions of the ambient connection\\
%
%
\hline
\end{tblr}
\end{center}


\bibliographystyle{alpha}\bibliography{refs}

\end{document}